\documentclass{amsart}

\usepackage{amsmath}
\usepackage{amssymb}
\usepackage{amsthm}
\usepackage{mathrsfs}
\usepackage{graphicx}

\title[Invariants of ${\rm PSL}(n,\mathbb{R})$-Fuchsian representations]{Invariants of ${\rm PSL}(n,\mathbb{R})$-Fuchsian representations and a slice of Hitchin components}
\author{Yusuke Inagaki}
\address{Graduate~School~of~Science, Osaka~University}
\email{y-inagaki@cr.math.sci.osaka-u.ac.jp}
\begin{document}
\maketitle

\newtheorem{theorem}{Theorem}[section]
\newtheorem{corollary}[theorem]{Corollary}
\newtheorem{lemma}[theorem]{Lemma}
\newtheorem{proposition}[theorem]{Proposition}
\newtheorem{definition}[theorem]{Definition}
\newtheorem{question}[theorem]{Question}
\newtheorem{example}[theorem]{Example}
\newtheorem{remark}[theorem]{Remark}
\newtheorem{conjecture}[theorem]{Conjecture}
\newtheorem{caution}[theorem]{Caution}

\begin{abstract}
In this paper we show some properties of triangle invariants and shearing invariants of ${\rm PSL}(n,\mathbb{R})$-Fuchsian representations.
Moreover, using the Bonahon-Dreyer parameterization, we show that the Fuchsian locus of Hitchin components corresponds to a slice.
\end{abstract}

\section{Introduction}

In this paper we characterize ${\rm PSL}_n\mathbb{R}$-Fuchsian representations and verify how Teichm\"uller spaces are embedded into Hitchin components.
Let $S$ be a compact hyperbolic oriented surface.
The ${\rm PSL}_n\mathbb{R}$-Hitchin components $H_n(S)$ of $S$ is a prefered component of the ${\rm PSL}_n\mathbb{R}$-character variety ${\rm Hom}(\pi_1(S), {\rm PSL}_n\mathbb{R})/{\rm PSL}_n\mathbb{R}$.
Thee elements of $H_n(S)$ are called Hitchin representations.
These components are a higher dimensional analog of the Teichm\"uller space of $S$, which is the deformation space of hyperbolic structures of $S$.
The Hitchin component contains a subset $F_n(S)$ which corresponds to the Teichm\"uller space, called the Fuchsian locus.
The goal of this paper is to study the behavior of certain invariants of Hitchin representations on the Fuchsian locus, and to describe Fuchsian loci concretely.

For our purpose, we use the Bonahon-Dreyer parameterization of Hitchin components.
Let $\mathcal{L}$ be a maximal geodesic lamination of $S$ with finitely many leaves.
Such a lamination induces an ideal triangulation of $S$.
In particular we consider a maximal geodesic lamination associated to a pants decomposition of $S$, {\it i.e.} a maximal geodesic lamination whose closed leaves induces a pants decomposition of $S$.
For the lamination and the ideal triangulation, we can define three kinds of invariants of Hitchin representations:
(i) the triangle invariants for ideal triangles,
(ii) the shearing invariants for biinfinite leaves,
and (iii) the gluing invariants for closed leaves.
The Bonahon-Dreyer parameterization is defined by using these invariants.
This is a parameterization of $H_n(S)$ by the interior of a convex polytope in $\mathbb{R}^N$, where $N$ is a number depending on $\mathcal{L}$.
We denote the Bonahon-Dreyer parameterization associated to $\mathcal{L}$ by $\Phi_{\mathcal{L}} : H_n(S) \rightarrow \mathcal{P}_{\mathcal{L}} \subset \mathbb{R}^N$.
The main result of this paper is as follows.
\begin{theorem}
There is a slice $\mathcal{S}_{\mathcal{L}}$ of the range $\mathcal{P}_{\mathcal{L}}$ of the Bonahon-Dreyer parameterization associated to $\mathcal{L}$ such that the image $\Phi_{\mathcal{L}}(F_n(S))$ coincides with $\mathcal{S}_{\mathcal{L}}$.
\end{theorem}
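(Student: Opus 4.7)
The plan is to compute the Bonahon-Dreyer coordinates of an arbitrary Fuchsian representation explicitly, define $\mathcal{S}_{\mathcal{L}}$ as the locus cut out by the relations that emerge, and then reverse the computation. A Fuchsian representation has the form $\rho = \iota_n \circ \rho_0$ for a Fuchsian $\rho_0 : \pi_1(S) \to \mathrm{PSL}_2\mathbb{R}$ and the principal embedding $\iota_n : \mathrm{PSL}_2\mathbb{R} \to \mathrm{PSL}_n\mathbb{R}$. The associated Frenet curve $\xi_\rho$ then factors through the Veronese curve $\mathbb{R}\mathbb{P}^1 \to \mathrm{Flag}(\mathbb{R}^n)$, so every flag configuration occurring in the BD-invariants is $\mathrm{PSL}_n\mathbb{R}$-conjugate to a standard Veronese configuration. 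This is the single geometric fact that drives everything.

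First I would evaluate the triangle invariants on $F_n(S)$. Since $\mathrm{PSL}_2\mathbb{R}$ acts triply transitively on $\mathbb{R}\mathbb{P}^1$ and the Veronese curve is equivariant, the flag triple attached to any ideal triangle is conjugate to a single universal triple; hence every triangle invariant takes a constant value $t^{(n)}_{abc}$ depending only on the index $(a,b,c)$ and on $n$, computable in closed form from the principal embedding. Next I would analyse the shearing and gluing invariants. Along a biinfinite leaf of $\mathcal{L}$, the $n-1$ shearing invariants become prescribed affine functions of the single classical hyperbolic shearing parameter across that leaf; along a closed leaf, the gluing invariants become prescribed affine functions of the hyperbolic length and twist, the coefficients being dictated by the eigenvalue vector $(\lambda^{n-1},\lambda^{n-3},\dots,\lambda^{-(n-1)})$ of $\iota_n$ on a diagonalisable element.

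Taking these universal values and affine relations as defining equations, let $\mathcal{S}_{\mathcal{L}} \subset \mathcal{P}_{\mathcal{L}}$ be the resulting affine slice. By construction the previous computation gives the inclusion $\Phi_{\mathcal{L}}(F_n(S)) \subseteq \mathcal{S}_{\mathcal{L}}$, and the affine relations reduce the effective number of free parameters to the $6g-6$ Thurston shear-bend parameters needed to specify a hyperbolic structure via $\mathcal{L}$, so $\dim \mathcal{S}_{\mathcal{L}} = \dim F_n(S)$.

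The main obstacle is the reverse inclusion $\mathcal{S}_{\mathcal{L}} \subseteq \Phi_{\mathcal{L}}(F_n(S))$, which I would treat by a reconstruction argument. Given a point of $\mathcal{S}_{\mathcal{L}}$, the free coordinates surviving the affine relations can be interpreted directly as Thurston shearing data for an oriented hyperbolic structure on $S$ relative to $\mathcal{L}$, producing a Fuchsian $\rho_0$; by the formulas of the previous paragraph, $\Phi_{\mathcal{L}}(\iota_n \circ \rho_0)$ recovers the given point, and injectivity of $\Phi_{\mathcal{L}}$ forces the representation to lie in $F_n(S)$. Alternatively, since $F_n(S)$ is connected and $\Phi_{\mathcal{L}}$ is a homeomorphism onto its image, one can verify that $\Phi_{\mathcal{L}}(F_n(S))$ is open and closed in the connected manifold $\mathcal{S}_{\mathcal{L}}$ of matching dimension and conclude equality. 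The delicate point in either approach is matching the Bonahon-Dreyer closedness conditions defining $\mathcal{P}_{\mathcal{L}}$ with the hyperbolic closedness conditions that appear when one reassembles a global hyperbolic structure from its shearing data.
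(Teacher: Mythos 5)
Your overall architecture coincides with the paper's: compute the invariants of $\iota_n\circ\rho_0$ by reducing every flag configuration to a standard Veronese configuration via equivariance and the transitivity of ${\rm PSL}_2\mathbb{R}$ on $\partial\mathbb{H}^2$, define $\mathcal{S}_{\mathcal{L}}$ by the resulting relations, and then reconstruct a hyperbolic structure from a point of the slice. Your first step is exactly the paper's Section 5, although you leave the universal constants unevaluated; the actual computation (Propositions 5.1 and 5.5, via the determinant Lemmas 5.2--5.4) shows the triangle constants are $0$ and that the shearing and gluing invariants are not merely affine in the classical data with $p$-dependent coefficients but literally independent of $p$ and equal to the $n=2$ cross-ratio quantities. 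This matters for writing down the slice purely in the Bonahon--Dreyer coordinates: the defining equations are $\tau_{pqr}=0$, $\sigma_p=\sigma_q$, $\theta_p=\theta_q$, not relations referring to external Fenchel--Nielsen data.

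Where you diverge is the surjectivity step, and this is where your proposal has a genuine gap. The paper does not invoke Thurston shear coordinates for $\mathcal{L}$ wholesale; it builds the surface pants-by-pants (using the shearing parameterization of a pair of pants and the closed leaf condition to guarantee the prescribed $z_{ij}$ are realizable), glues by Fenchel--Nielsen, and then uses explicit twist deformations to realize the prescribed gluing invariants $w_i$ (Lemma 6.2: the cross ratio sweeps out all of $\mathbb{R}_{<0}$ as the twist parameter varies; Lemma 6.3: twists along distinct decomposing curves do not interfere). Your phrase ``the free coordinates can be interpreted directly as Thurston shearing data'' elides exactly this point: the gluing invariant along a closed leaf is a double ratio measured through the short arc system, not a shear along a biinfinite leaf, and one must prove both that it can be independently driven to any real value by twisting and that these adjustments do not perturb the already-fixed pants structures or each other. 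You flag ``matching the closedness conditions'' as the delicate point, but the realization and independence of the gluing invariants is the additional content you would need to supply. Your fallback open-closed argument has a parallel unverified step: openness follows from invariance of domain and the dimension count, but closedness of $\Phi_{\mathcal{L}}(F_n(S))$ in $\mathcal{S}_{\mathcal{L}}$ requires properness of $\mathscr{T}(S)\rightarrow H_n(S)$, which you assert rather than prove. Either route is repairable, but as written the reverse inclusion is not established.
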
  

Moreover, we obtain the property of triangle, shearing, and gluing invariants on Fuchsian loci as corollary.
\begin{corollary}
A Hitchin representation is ${\rm PSL}_n\mathbb{R}$-Fuchsian if and only if
\begin{itemize}
\item[(i)] the triangle invariants are all zero, and
\item[(ii)] the shearing and gluing invariants are independent of their index.
\end{itemize}
\end{corollary}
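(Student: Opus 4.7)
The plan is to derive the corollary as an explicit reading of the main theorem, once the slice $\mathcal{S}_{\mathcal{L}}$ has been identified concretely. I would first verify the forward implication: if $\rho$ is $\mathrm{PSL}_n\mathbb{R}$-Fuchsian, then $\rho = \iota_n \circ \rho_0$ for some Fuchsian $\rho_0 : \pi_1(S) \to \mathrm{PSL}_2\mathbb{R}$, with $\iota_n$ the principal embedding $\mathrm{PSL}_2\mathbb{R} \hookrightarrow \mathrm{PSL}_n\mathbb{R}$. The limit curve of $\rho$ in the full flag variety is then the osculating flag curve along the rational normal curve in $\mathbb{P}^{n-1}$, obtained by lifting the boundary map of $\rho_0$. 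For triangle invariants, any triple of osculating flags along the rational normal curve has all triple ratios equal to zero, which follows from a Vandermonde-type determinantal identity; this gives (i). For shearing and gluing invariants, I would compute the $i$-th invariant as a log-cross-ratio of iterated intersections of the flag filtration, and use the explicit basis of the $(n-1)$-st symmetric power of the standard representation to show that the resulting expression is a scalar multiple of the classical hyperbolic shear (respectively hyperbolic length), with the scalar depending only on $n$ and $i$; an appropriate normalization then yields (ii).

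For the converse direction, the main theorem guarantees that $\Phi_{\mathcal{L}}(F_n(S))$ coincides with a slice $\mathcal{S}_{\mathcal{L}}$ of $\mathcal{P}_{\mathcal{L}}$, and the forward computation places $\Phi_{\mathcal{L}}(F_n(S))$ inside the locus $\mathcal{Z} \subset \mathcal{P}_{\mathcal{L}}$ cut out by conditions (i) and (ii). Both sets are relatively open subsets of affine subspaces of $\mathbb{R}^N$, and since the Fuchsian locus has dimension $6g-6$ equal to that of Teichm\"uller space, the codimension of $\mathcal{Z}$ must match that of $\mathcal{S}_{\mathcal{L}}$. As $\mathcal{S}_{\mathcal{L}} \subseteq \mathcal{Z}$ are connected affine slices of the same dimension, they coincide, giving the reverse implication.

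The main obstacle I anticipate is verifying the index-independence of the shearing invariants on the Fuchsian locus. The vanishing of triangle invariants follows cleanly from the projective symmetry of the rational normal curve, but the shearing invariants are iterated log-cross-ratios along each biinfinite leaf and depend a priori on a filtration index $i \in \{1, \ldots, n-1\}$. Proving that this dependence collapses precisely on the Fuchsian locus requires carefully aligning the flag filtration at an ideal endpoint with the $\mathrm{SL}_2$-weight filtration coming from the principal embedding, and then tracking how the $(n-1)$-st symmetric power redistributes the hyperbolic shear uniformly across the weight spaces. A parallel but simpler computation handles the gluing invariants at closed leaves via their interpretation as spectral data of the holonomy.
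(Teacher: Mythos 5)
Your forward direction follows the same route as the paper: normalize a triple or quadruple of boundary points by the $\mathrm{PSL}_2\mathbb{R}$-action, write the osculating (Veronese) flags in the monomial basis of $\mathrm{Sym}^{n-1}(\mathbb{R}^2)$, and evaluate the resulting determinants of binomial coefficients by a Vandermonde-type identity. Two cautions there. First, the triple ratios themselves equal $1$, not $0$; it is the triangle invariants, their logarithms, that vanish (a triple ratio equal to $0$ would mean the flags are not generic). Second, and more seriously, your plan for (ii) --- show that the $p$-th invariant is ``a scalar multiple of the classical hyperbolic shear, with the scalar depending only on $n$ and $i$,'' and then ``normalize'' --- cannot yield index-independence: if the scalar genuinely depended on $i$, the invariants would differ across $i$ whenever the shear is nonzero, and no normalization repairs that. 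The computation must show the scalar is the same for every $p$; in the paper the factors $(-1)^{n-p-1}\binom{n-1}{p}$ cancel between numerator and denominator of the double ratio, giving $D_p = -z^{-1}$ exactly and for all $p$. You identify this as your main obstacle but do not resolve it.

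The converse is where the proposal breaks. Your argument is: $\Phi_{\mathcal{L}}(F_n(S)) = \mathcal{S}_{\mathcal{L}} \subseteq \mathcal{Z}$, both are connected relatively open subsets of affine subspaces of equal dimension, hence they coincide. That inference is false: an open disk properly contained in a larger open disk of the same plane satisfies all of those hypotheses. Equality would require $\mathcal{S}_{\mathcal{L}}$ to be both open and closed in $\mathcal{Z}$, and you do not address closedness. There is also a circularity: in the paper the slice of Theorem 1.1 is \emph{defined} as the locus cut out by (i) and (ii), and the content of the theorem is precisely the surjectivity you are trying to deduce from it; one cannot invoke the theorem with an unspecified slice and then identify that slice by a dimension count --- a count you moreover do not carry out, and which requires checking how (i) and (ii) interact with the closed-leaf equalities defining $\mathcal{P}_{\mathcal{L}}$. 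The paper's proof is instead constructive: given a point of the slice, it realizes the prescribed shearing invariants as hyperbolic shearing parameters on each pair of pants (the closed-leaf condition is exactly what puts them in the range of the shear parameterization of a pair of pants), glues the pants by Fenchel--Nielsen, and then realizes the prescribed gluing invariants by twist deformations along the decomposing curves, checking that the twist along $C_i$ moves the relevant cross ratio onto all of $\mathbb{R}_{<0}$ and that twists along distinct curves do not interfere. Some realization argument of this kind is needed; the dimension count does not substitute for it.
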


\begin{remark}
Theorem 1.1 and Corollary 1.2 hold for any maximal geodesic laminations which consist of finitely many leaves.
We can use the argument of the proof of the main results to show this.
\end{remark}

\subsection*{Structure of this paper}
In Section 2, we recall the hyperbolic geometry of surfaces.
The tools, the shearing parameterization of a pair of pants, the Fenchel-Nielsen coordinate, the twist deformation, which are used in the proof of the main result, are defined in this section.
In Section 3, we define Hitchin components and recall properties of Hitchin representations, called the hyperconvex property and the Anosov property.
The Bonahon-Dreyer coordinate is defined in Section 4.
After the precise definition of the triangle, shearing, and gluing invariant, we recall the Bonahon-Dreyer parameterization theorem.
In Section 5, we show the only-if part of Corollary 1.2.
The proof is due to direct computations of the invariants.
In Section 6, we show the main result by using the technique of hyperbolic geometry of surfaces. 
Theorem 5.6 and Theorem 6.1 imply Theorem 1.1 and Corollary 1.2.
In Section 7, we refer to the case of surfaces with boundary.

\begin{remark}
The results of this paper are a generalization of \cite{I}.
We use a technique which is used in \cite{I} to show Proposition 5.1 and Proposition 5.5.  
\end{remark}

\subsection*{Acknowledgements}
The author would like to thank Shinpei Baba, Hideki Miyachi, and Ken'ichi Ohshika for their warm encouragement and valuable discussion.

\section{Hyperbolic geometry of surface}
\subsection{Hyperbolic structures of surfaces}
Let $S$ be a compact oriented surface.
We denote the hyperbolic plane of upper-half plane model by $\mathbb{H}^2$.
In this paper, we endow $\mathbb{H}^2$ with the orientation induced by the framing $<e_1, e_2>$, where $e_1 = (1,0)^t, e_2 = (0,1)^t$.
The group of orientation-preserving isometries ${\rm Isom}^+(\mathbb{H}^2)$ is isomorphic to the group ${\rm PSL}_2\mathbb{R}$, and the group ${\rm PSL}_2\mathbb{R}$ acts on $\mathbb{H}^2$ as linear fractional transformations.
A {\it hyperbolic metric} of $S$ is a complete Riemannian metric of constant curvature $-1$, which makes the boundary totally geodesic if $S$ has a nonempty boundary.
An isometric class of a hyperbolic metric on $S$ is often called a {\it hyperbolic structure} of $S$.
The hyperbolic structure of $S$ is related to a good representation of the fundamental group $\pi_1(S)$.
A representation $\rho : \pi_1(S) \rightarrow {\rm PSL}_2 \mathbb{R}$ is said to be {\it Fuchsian} if (i) $\rho$ is faithful and discrete,  and (ii) $\rho$ sends the boundary components to hyperbolic elements if $S$ has a nonempty boundary.
If $\rho : \pi_1(S) \rightarrow {\rm PSL}_2\mathbb{R}$ is Fuchsian, then there exists a subset $\Omega_{\rho}$, which is called a domain of discontinuity of $\rho$, such that $\rho(\pi_1(S))$ acts on $\Omega_{\rho}$ properly and $S_{\rho}= \rho(\pi_1(S)) \setminus \Omega_{\rho}$.  
The surface $S_{\rho}$ is a surface with a hyperbolic metric.
For a Fuchsian representation $\rho$, we can construct a $(\pi_1(S), \rho)$-equivariant local homeomorphism $f_{\rho} : \tilde{S} \rightarrow \mathbb{H}^2$ from the universal covering of $S$ to the hyperbolic plane. 
The image coincides with $\Omega_{\rho}$.
This map $f_{\rho}$ is called the {\it developing map} associated to $\rho$.
In this paper we assume that Fuchsian representations are orientation-preserving, i.e. the associated developing map is orientation-preserving.
In addition to, we suppose that the reference surface $S$ is given a hyperbolic metric.

\subsection{Geodesic laminations}
A {\it geodesic lamination} is a closed subset of $S$ which can be decomposed to a disjoint union of simple complete geodesics called {\it leaves}. 
Geodesic laminations consist of closed and biinfinite geodesics, and we call them {\it closed leaves} and {\it biinfinite leaves} respectively. 
The concept of geodesics depends on a hyperbolic metric of $S$. 
We remark that there exists a natural bijection between the set of $g_1$-geodesic laminations and the set of $g_2$-geodesic laminations for different hyperbolic metrics $g_1$ and $g_2$ of $S$.
In particular, for any hyperbolic metric $g$ and any simple curve $c$ on $S$, there is a $g$-geodesic $c_g$ which is isotopic to $c$. 
A geodesic lamination is said to be {\it maximal} if it is properly contained in no other geodesic lamination.
In this paper, we consider only laminations consisting of finitely many leaves.
For a geodesic lamination $\mathcal{L}$ of $S$, the preimage $\tilde{\mathcal{L}}$ of $\mathcal{L}$ in $\tilde{S}$ gives a geodesic lamination of $\mathbb{H}^2$.
A connected component of the closure of $\mathbb{H}^2 \setminus \tilde{\mathcal{L}}$ is called a {\it plaque}.
A geodesic lamination is {\it oriented} if each leaf is oriented.
We may choose the orientation of each leaf independently.
Given maximal geodesic lamination $\mathcal{L}$, we define a short arc system for closed leaves as an additional data.
A {\it short arc system} $K = \{ K_C \}_C$ is a family of an arc $K_C$ defined for each closed leaf $C$ of $\mathcal{L}$ which satisfies two condition (i),(ii) below: 
(i) The arc $K_C$ is transverse to $\mathcal{L}$ and the intersection $K_C \cap C$ is just one point $x$. 
(ii) Let $K_1$ and $K_2$ be component of $K_C \setminus C$.
Then there exists an immersion $f_i : K_i \times [0, +\infty) \rightarrow S$ such that $f_i(x,0) = x$, the subset $\{x \} \times [0, \infty)$ parametrizes a geodesic with unit speed spiraling along $C$, and the image $f_i(x,[0, \infty))$ is contained in a leaf of $\mathcal{L}$ if $x \in \mathcal{L} \cap K_i$.
We denote, by $\mathcal{L}_K$, the geodesic lamination $\mathcal{L}$ with a short arc system $K$.
Note that the closed leaf which intersect to $K_C$ is only $C$ by definition.

\subsection{Teichm\"uller space}

The {\it Teichm\"uller space} $\mathscr{T}(S)$ of $S$ is defined by
\[ \mathscr{T}(S) = \{ \rho : \pi_1(S) \rightarrow {\rm PSL}_2\mathbb{R} ~|~ \mbox{Fuchsian} \}/{\rm PSL}_2\mathbb{R} \]
where the quotient is defined by the conjugate action of ${\rm PSL}_2\mathbb{R}$ on the set of Fuchsian representations.
The topology of $\mathscr{T}(S)$ is given by the compact open topology.
The Teichm\"uller space is the deformation space of hyperbolic structures of $S$.
Let ${\rm Hyp}(S)$ be the set of hyperbolic metrics of $S$, and ${\rm Diff}_0(S)$ be the identity component of the group of diffeomorpshisms of $S$.
The group ${\rm Diff}_0(S)$  acts on ${\rm Hyp}(S)$ by pull-back.
The Teichm\"uller space is also defined by $\mathscr{T}(S) ={\rm Hyp}(S) / {\rm Diff}_0(S)$.
This definition is equivalent to the definition by Fuchsian representations.
If we have a Fuchsian representation of $\pi_1(S)$, then the associated hyperbolic metric is defined by the covering $\Omega_{\rho} \rightarrow S$.
Conversely, for any a hyperbolic metric $g$ of $S$, there is an orientation-preserving isometric embedding $f_g : \tilde{S}_g \rightarrow \mathbb{H}^2$ where $\tilde{S}_g$ is the universal covering of $S_g$ with pullback of $g$. 
Then we can take a representation $\rho : \pi_1(S) \rightarrow {\rm PSL}_2\mathbb{R}$ such that $f_g$ is $(\pi_1(S), \rho)$-equivariant.
This representation is Fuchsian. 
There are some equivalent definitions of $\mathscr{T}(S)$, see \cite{IT}.

\subsection{Parameterizations of hyperbolic structures of a pair of pants}
\subsection*{Length parameterization}
We consider some parameterizations of the Teichm\"uller space of a pair of pants.
Note that a pair of pants is oriented.
It is well known that hyperbolic structures of a pair of pants $P$ is uniquely determined by the length of boundary components.
\begin{theorem}(\cite{IT}, Theorem 3.5.)
Let $C_1, C_2, C_3$ be boundary components of $P$.
Then the map 
\[ \mathscr{T}(P) \rightarrow \mathbb{R}_{>0}^3 : \rho \mapsto (l_{\rho}(C_1), l_{\rho}(C_2), l_{\rho}(C_3)) \]
is a diffeomorphism, where $l_{\rho}(\cdot)$ is the length function associated to a hyperbolic structure $\rho$. 
\end{theorem}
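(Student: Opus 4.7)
The plan is to reduce the statement to the classical trigonometric fact that a right-angled hyperbolic hexagon is determined up to isometry by the lengths of three alternating sides, which may be prescribed arbitrarily in $\mathbb{R}_{>0}^3$.

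First I would realize the pants geometrically. Given a hyperbolic structure on $P$, for each pair of distinct boundary components $C_i, C_j$ I take the unique simple geodesic arc $\alpha_{ij}$ that minimizes distance from $C_i$ to $C_j$; by a standard convexity argument lifting to the universal cover, such an arc exists, is unique, meets both boundary components orthogonally, and the three arcs $\alpha_{12},\alpha_{13},\alpha_{23}$ are pairwise disjoint. Cutting $P$ along these three arcs decomposes it into two right-angled hyperbolic hexagons $H^+$ and $H^-$, each of which has three "boundary sides" of lengths $l_\rho(C_i)/2$ alternating with three "seam sides" of lengths equal to the $\alpha_{ij}$. By construction the two hexagons are isometric via the reflection along the seams.

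Next I would apply the hyperbolic law of cosines for right-angled hexagons: for any triple $(a_1,a_2,a_3)\in\mathbb{R}_{>0}^3$ there exists, unique up to orientation-preserving isometry of $\mathbb{H}^2$, a right-angled hexagon whose alternating side lengths are $a_1,a_2,a_3$, and the remaining three side lengths are real-analytic functions of $(a_1,a_2,a_3)$. Applying this to $a_i = l_\rho(C_i)/2$ determines $H^+$, hence also $H^-$ and the isometric gluing pattern, so the pants is reconstructed from $(l_\rho(C_1),l_\rho(C_2),l_\rho(C_3))$ up to isometry. This gives injectivity of the length map. For surjectivity, I start from any triple $(l_1,l_2,l_3)\in\mathbb{R}_{>0}^3$, build the right-angled hexagon with alternating sides $l_i/2$, double it along the seams to obtain a hyperbolic pair of pants with boundary lengths $l_i$, and extract the associated Fuchsian representation via a developing map.

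Finally, I would address smoothness: the forward map is smooth because length functions $\rho\mapsto l_\rho(\gamma)$ of fixed conjugacy classes are real-analytic on $\mathscr{T}(P)$ (they are expressible through traces of matrices in $\mathrm{PSL}_2\mathbb{R}$), and smoothness of the inverse follows from the real-analytic dependence supplied by the hexagon trigonometry, together with a check that the constructed developing map depends smoothly on the hexagon data. The main obstacle I expect is the geometric step of showing that the minimizing orthogeodesics $\alpha_{ij}$ exist, are simple, disjoint, and do cut $P$ into exactly two isometric right-angled hexagons; everything after that is a routine application of hexagon trigonometry that I would not grind through.
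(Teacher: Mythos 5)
The paper does not prove this statement at all: it is quoted directly from Imayoshi--Taniguchi \cite{IT} (their Theorem 3.5), so there is no in-paper argument to compare yours against. Your sketch is the standard proof of that cited result --- cut the pants along the three disjoint orthogeodesic seams into two right-angled hexagons and invoke the existence and uniqueness of a right-angled hexagon with three prescribed alternating side lengths --- and it is correct in outline. The one point to state in the right order is that the boundary sides having length exactly $l_{\rho}(C_i)/2$ is a \emph{consequence} of the hexagon uniqueness (the two hexagons share the three seam sides as alternating sides, hence are isometric, hence bisect each $C_i$), not something that holds ``by construction'' before that uniqueness is applied.
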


\subsection*{Shearing parameterization}
We give another parameterization of $\mathscr{T}(P)$ by the shearing parameter along ideal triangles.
An ideal triangle is a geodesic triangle in $\mathbb{H}^2$ which has vertices at infinite boundary.
This is unique up to isometry.
Let us consider two ideal triangles $\triangle(x, y, z_0), \triangle(x,y,z_1)$ which are adjacent along the side $[x,y]$.
For each triangle, we draw the geodesic $p_0, p_1$ joining $z_0$, $z_1$ to $[x,y]$ which is orthogonal to $[x,y]$.
Let $b_i = p_i \cap [x,y]$.
The {\it shearing parameter} $\sigma(\triangle(x, y, z_0), \triangle(x,y,z_1) )$ of $\triangle(x,y, z_0)$ and $\triangle(x,y,z_1)$ along $[x,y]$ is a signed hyperbolic distance $d(b_0, b_1)$.
If $b_1$ is on the left side of $b_0$ with respect to the direction of $[z_0, b_0]$ from $z_0$ to $b_0$, then we define the sign of  $\sigma(\triangle(x, y, z_0), \triangle(x,y,z_1) )$ is positive.
See Figure 1.
\begin{figure}[htbp]
\begin{center}
\includegraphics[width = 7cm, height=4cm]{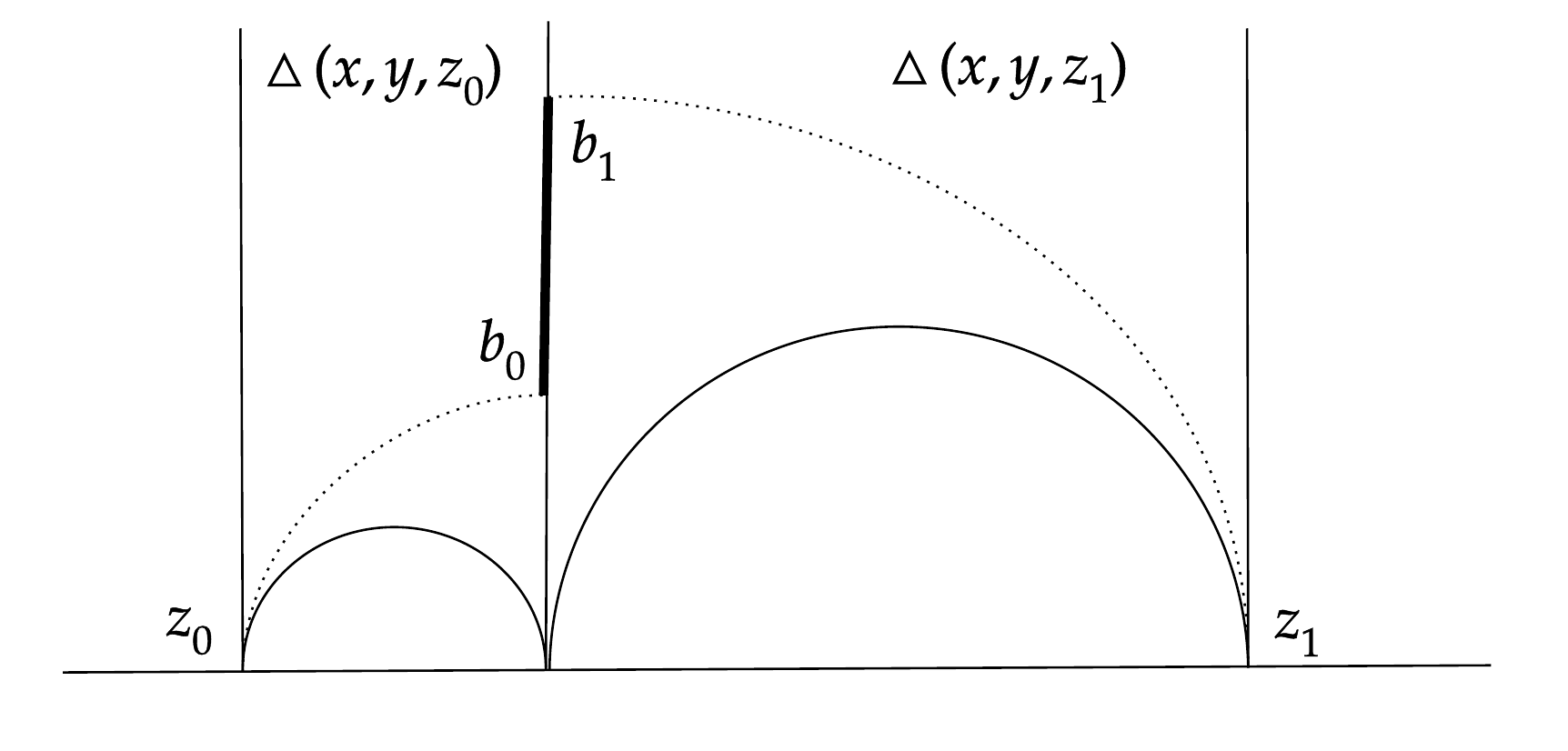}
\caption{The shearing parameter is positive.}
\end{center}
\end{figure}
We can describe shearing parameters in terms of cross ratio.
 
\begin{definition}
Let $a,b,c,d \in \partial \mathbb{H}^2$ be a quadruple of distinct points of the ideal boundary $\partial \mathbb{H}^2$.
The cross ratio $z(a,b,c,d)$ is the ratio
\[ z(a,b,c,d) = \dfrac{(d-a)(b-c)}{(d-c)(b-a)}. \]
\end{definition}
\begin{remark}
The cross ratio  of $a,b,c,d \in \partial \mathbb{H}^2$ is usually defined by
\[ z'(a,b,c,d) = \dfrac{(a-c)(b-d)}{(a-d)(b-c)}. \]
Two definitions have the relation $z(a,b,c,d) = z'(d,b,a,c)$.
The prefered point  of our definition is to satisfy $ z(0,1,\infty,d) = d$. 
\end{remark}

Let $B$ be a biinfinite leaf with the end points $x,y$.
We consider two ideal triangles $T^l = \triangle (x,z^l,y)$ and $T^r = \triangle(x, y, z^r)$ where the points $x,z^l,y,z^r$ are in counterclockwise order.
The following relation is given by a direct computation.
\begin{proposition}The following relation holds.
\[ \sigma(T^l, T^r) = \log -z(y, z^r, x , z^l)^{-1}. \]
\end{proposition}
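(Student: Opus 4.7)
The plan is to exploit the $\mathrm{PSL}_2\mathbb{R}$-invariance of both sides and then carry out an explicit computation in normalized coordinates. The signed shearing parameter is preserved by orientation-preserving isometries of $\mathbb{H}^2$ (the foot of the perpendicular, the hyperbolic distance, and the notion of ``left of $b_0$'' are all isometric, orientation-preserving data), while the cross ratio $z$ is preserved by M\"obius transformations on $\partial\mathbb{H}^2$. Hence I would act by a suitable element of $\mathrm{PSL}_2\mathbb{R}$ to move $(x,y)$ to $(0,\infty)$, so that the geodesic $[x,y]$ becomes the positive imaginary axis.

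Under this normalization, the counterclockwise order of $x,z^l,y,z^r$ on $\partial\mathbb{H}^2$ forces $z^l>0$ and $z^r<0$ on the real line: since the boundary is oriented so that $\mathbb{H}^2$ lies on its left, the positive real arc is precisely the counterclockwise arc from $0$ to $\infty$. The geodesic through the ideal point $z^l$ perpendicular to the imaginary axis is the Euclidean semicircle of radius $z^l$ centered at the origin, and it meets the imaginary axis at $b^l=iz^l$; symmetrically, $b^r=-iz^r$. The unsigned hyperbolic distance along the imaginary axis between these two feet is then $|\log(z^l/(-z^r))|$.

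To pin down the sign, I would parametrize the perpendicular semicircle counterclockwise from $z^l$ to $b^l$: the unit tangent at $b^l$ points in the direction $-e_1$, and a quarter-turn counterclockwise identifies ``left'' at $b^l$ with the downward direction along the imaginary axis. Thus $\sigma(T^l,T^r)>0$ precisely when $b^r$ lies below $b^l$, i.e.\ when $-z^r<z^l$, yielding $\sigma(T^l,T^r)=\log(-z^l/z^r)$. Substituting $x=0$, $y=\infty$ into the definition of the cross ratio and letting $y\to\infty$ gives $z(y,z^r,x,z^l)=z^r/z^l$, so $\log\bigl(-z(y,z^r,x,z^l)^{-1}\bigr)=\log(-z^l/z^r)$, which matches $\sigma(T^l,T^r)$ exactly. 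The only delicate point is unpacking the sign convention for $\sigma$ together with the counterclockwise ordering of the four ideal vertices; once these are translated into sign inequalities on the real line, the rest is routine.
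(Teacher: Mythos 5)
Your argument is correct and is exactly the ``direct computation'' that the paper asserts for this proposition but does not write out: after normalizing $(x,y)$ to $(0,\infty)$, your sign analysis (counterclockwise order giving $z^l>0$, $z^r<0$, and ``left'' at $b^l$ pointing down the imaginary axis) and the evaluation $z(y,z^r,x,z^l)=z^r/z^l$ both check out against the paper's conventions, yielding $\sigma(T^l,T^r)=\log(-z^l/z^r)=\log\bigl(-z(y,z^r,x,z^l)^{-1}\bigr)$. No gaps.
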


Using shearing parameters, we can parameterize hyperbolic structures of a pair of pants $P$.
Consider a maximal geodesic lamination of $P$.
Maximal geodesic laminations of $P$ which consist of finitely many leaves are classified into 2 types (I) and (II) as in Figure 2 and Figure 3.
\begin{figure}[htbp]
\begin{minipage}{0.45\hsize}
\begin{center}
\includegraphics[width = 40mm, height=2cm]{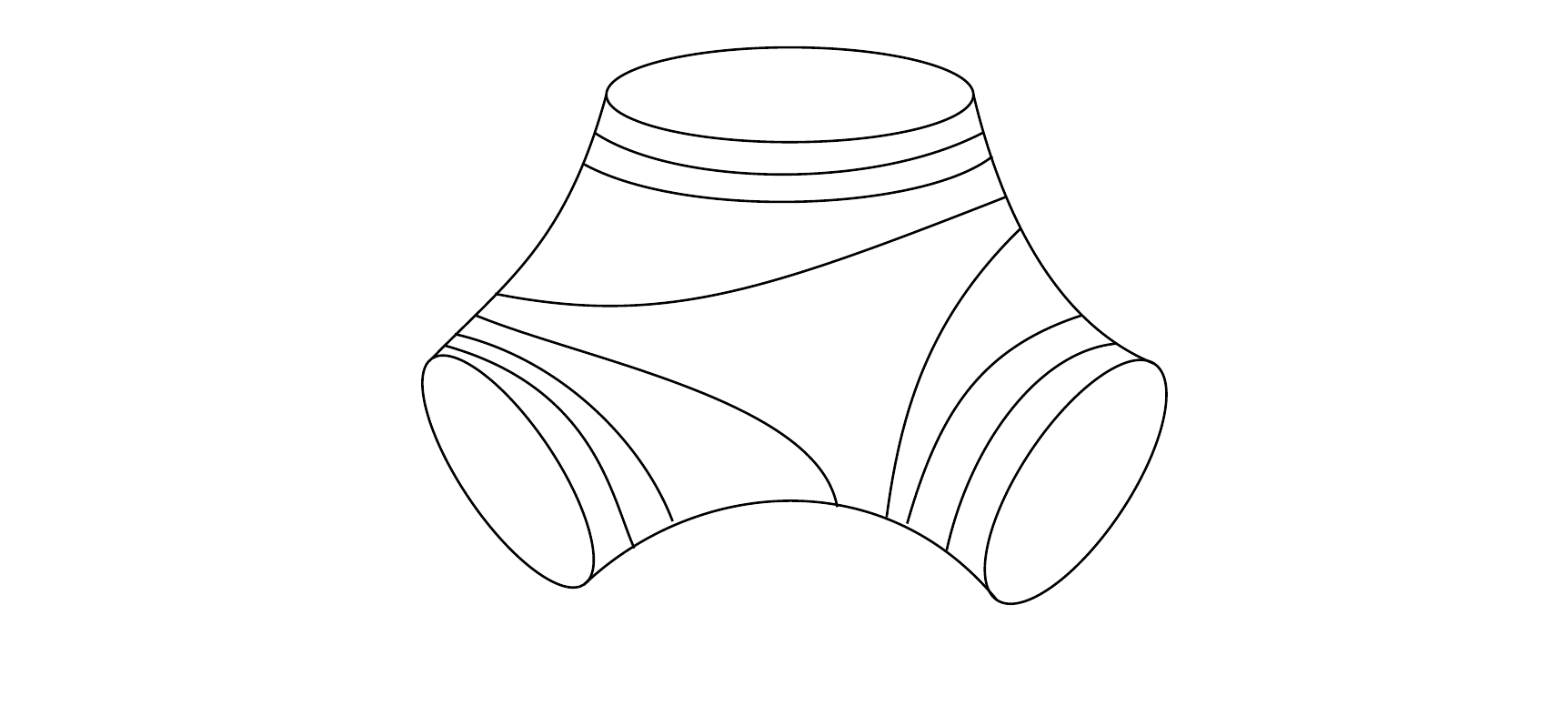}
\caption{A lamination of type 1.}
\end{center}
\end{minipage}
\begin{minipage}{0.45\hsize}
\begin{center}
\includegraphics[width=40mm, height=2cm]{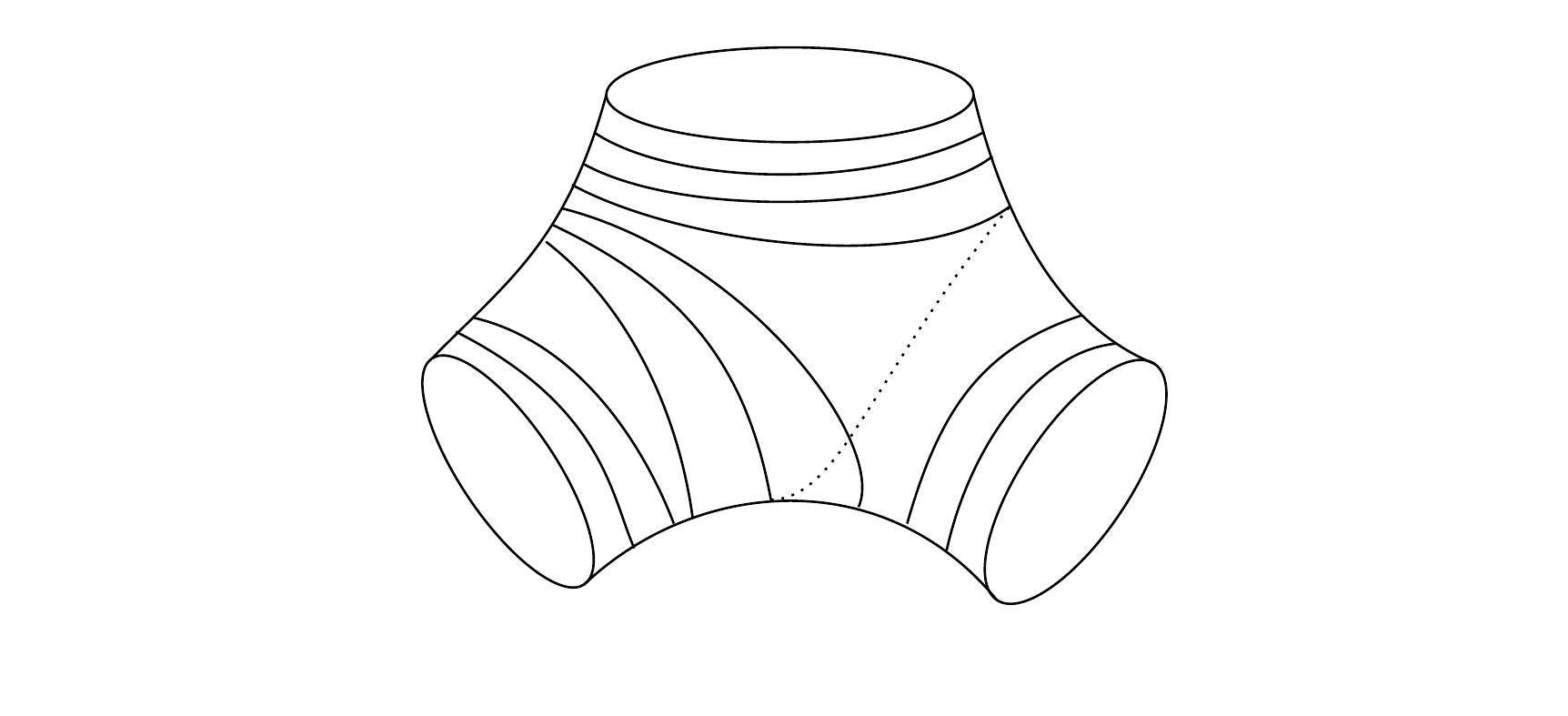}
\caption{A lamination of type 2.}
\end{center}
\end{minipage}
\end{figure}
The lamination of type (I) is represented by $\{ C_1, C_2, C_3, B_{12}, B_{23}, B_{31} \}$, where $C_i$ is a boundary component and $B_{ij}$ is a biinfinite leaf which spirals to $C_i$ and $C_j$.
The lamination of type (II) is represented by $\{ C_1, C_2, C_3, B_{ii}, B_{ij}, B_{ik} \}$.
They contain a biinfinite leaf spiraling to the same closed leaf in its ends.
Moreover we characterize these laminations by the direction of the spiraling.
When the spiraling occurs in the direction opposite to the orientation of pants, we call the spiraling {\it positive spiraling}.
See Figure 4.
Similarly, we call the spiraling in Figure 5 {\it negative spiraling}.
Maximal geodesic laminations on $P$ are classified by types and signatures of the spiraling.

\begin{figure}[htbp]
\begin{minipage}{0.45\hsize}
\begin{center}
\includegraphics[width = 40mm, height=3cm]{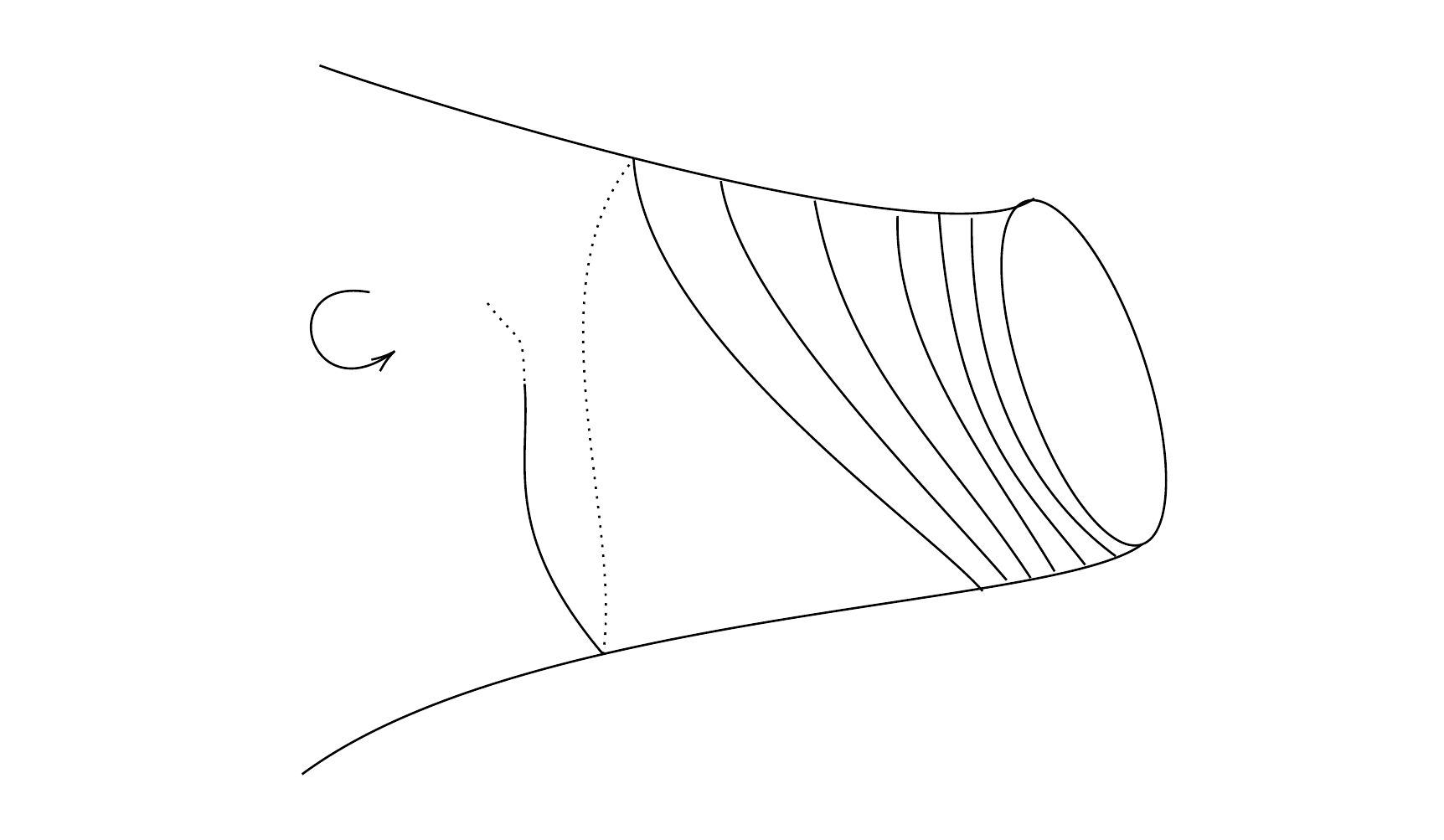}
\caption{Positive spiraling.}
\end{center}
\end{minipage}
\begin{minipage}{0.45\hsize}
\begin{center}
\includegraphics[width=40mm, height=3cm]{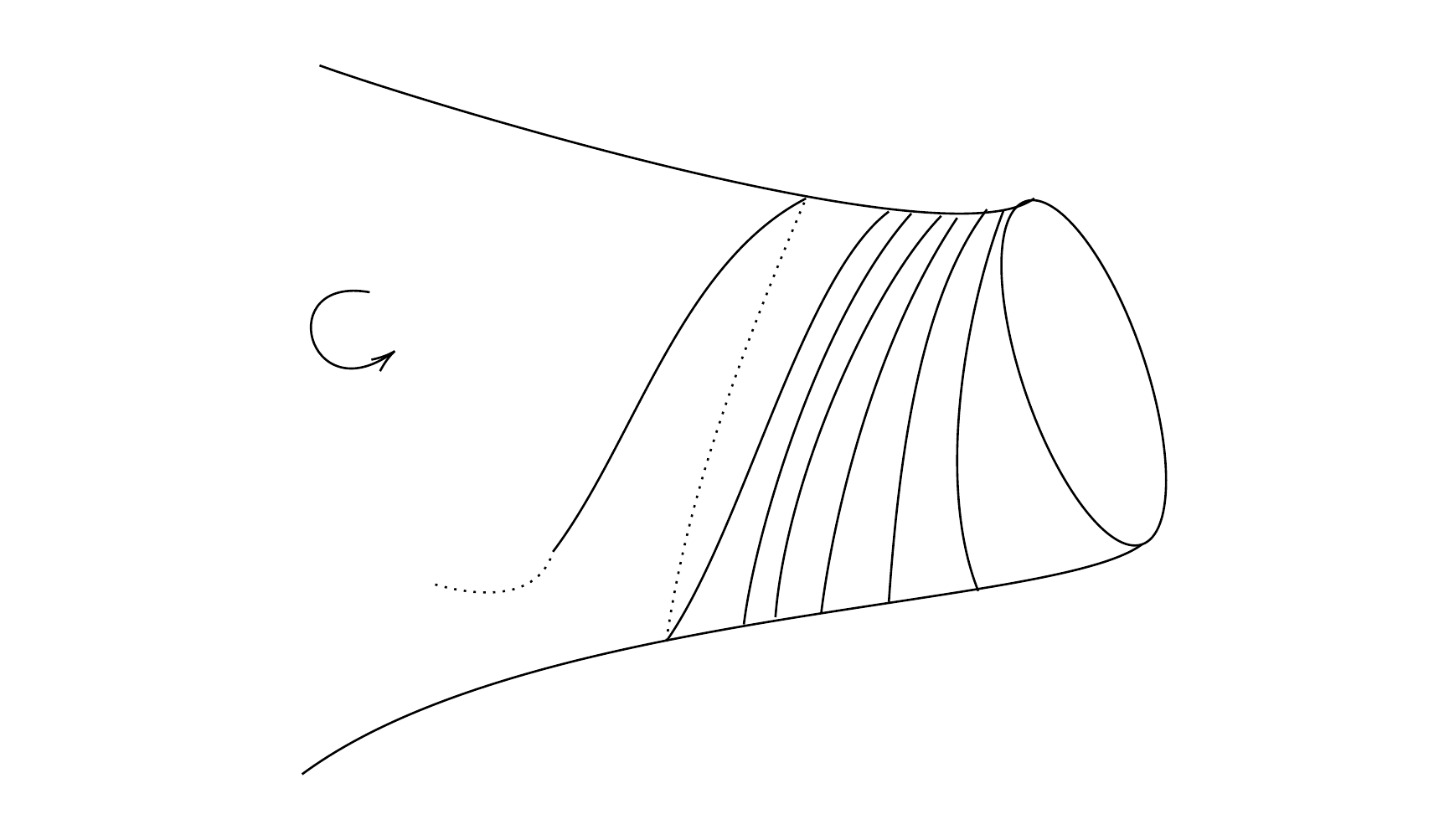}
\caption{Negative spiraling.}
\end{center}
\end{minipage}
\end{figure}

We fix a maximal geodesic lamination $\mathcal{L} = \{ C_1, C_2, C_3, B_1, B_2, B_3\}$ of $P$.
Note that both types (I) and (II) consist of three closed leaves and three biinfinite leaves.
This lamination induces an ideal triangulation of $P$.
Let $\rho \in \mathscr{T}(P)$ be a hyperbolic structure and $f_{\rho} : \tilde{P} \rightarrow \mathbb{H}^2$ be the associated developing map.
The shearing parameter of $\rho$ along $B_i$ is defined as follows.
Lift $B_i$ to $\tilde{B}_i$, which is a biinfinite geodesic in the universal covering $\tilde{P}$.
We denote the end points of $\tilde{B}_i$ by $x$ and $y$.
Under the ideal triangulation, $\tilde{B}_i$ is adjacent to two ideal triangles $T^l = \triangle (x,y,z^l)$ and $T^r = \triangle (x,y, z^r)$.
Here the vertices $z^r$ and $z^l$ are determined so that $x,z^l, y, z^r$ are in counterclockwise order.
We define the shearing parameter $\sigma^{\rho}(B_i)$ by the shearing parameter $\sigma(f_{\rho}(T^l), f_{\rho}(T^r))$.

\begin{proposition}(See \cite{Ma}, Proposition 7.4.9.)
There is an analytic embedding
\[ \sigma_{\mathcal{L}} ~:~\mathscr{T}(P) \rightarrow \mathbb{R}^3 : \rho \mapsto (\sigma^{\rho}(B_1), \sigma^{\rho}(B_2), \sigma^{\rho}(B_3)). \]

\end{proposition}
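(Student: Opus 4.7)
The plan is to reduce this proposition to the length parameterization of Theorem 2.1 by showing that the three boundary lengths are recovered from the three shearing parameters via an explicit linear map.

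First I would check that $\sigma_{\mathcal{L}}$ is real-analytic. By Proposition 2.4, each $\sigma^{\rho}(B_i)$ equals $\log\bigl(-z(y,z^r,x,z^l)^{-1}\bigr)$, where $x,y,z^l,z^r \in \partial\mathbb{H}^2$ are the images under the developing map $f_{\rho}$ of the relevant ideal vertices of plaques in $\tilde{P}$. Since $f_{\rho}$, and hence each of these ideal vertices, depends real-analytically on $\rho$, and the cross ratio is a rational function of its four arguments, each shearing parameter is real-analytic in $\rho$.

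Next, for injectivity and properness I would express the holonomy of each boundary curve $C_k$ in terms of the shearings. Choose a loop in $\tilde{P}$ representing a conjugate of $C_k$ that crosses a prescribed sequence of lifted leaves of $\tilde{\mathcal{L}}$; the holonomy then factors as a product of ``shear matrices'' conjugate to $\mathrm{diag}(e^{s/2},e^{-s/2})$, one factor of shear parameter $s = \sigma^{\rho}(B_j)$ per crossing with a lift of $B_j$. Computing the translation length of this product yields a formula
\[
l_{\rho}(C_k) \;=\; \sum_{j=1}^{3} a_{kj}\,\sigma^{\rho}(B_j),
\]
where the integer coefficients $a_{kj}$ are determined by the combinatorics of $\mathcal{L}$ (which leaves spiral into $C_k$, and with what multiplicity and sign). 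Assembling the three formulas for $k=1,2,3$ produces a linear map $L:\mathbb{R}^3\to\mathbb{R}^3$ such that $L\circ\sigma_{\mathcal{L}}$ coincides with the length parameterization of Theorem 2.1.

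A direct case check shows that the $3\times 3$ matrix $(a_{kj})$ is invertible in each combinatorial case. Consequently, composing the inverse of the length parameterization with $L$ produces a real-analytic left inverse to $\sigma_{\mathcal{L}}$, so $\sigma_{\mathcal{L}}$ is a real-analytic diffeomorphism onto an open subset of $\mathbb{R}^3$, i.e.\ an analytic embedding. The main obstacle is the combinatorial case analysis: the form of the holonomy product, and hence the matrix $(a_{kj})$, depends on whether the lamination is of type (I) or (II) and on the signatures (positive or negative) of the spiralings depicted in Figures 2--5, so one has to keep signs and cyclic orderings consistent across all cases in order to confirm invertibility of $(a_{kj})$ uniformly.
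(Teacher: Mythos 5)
The paper does not actually prove this proposition: it is quoted verbatim from Martelli (Proposition 7.4.9), so there is no in-paper argument to match yours against. Your route --- factor the boundary-length map through the shearing map via a linear relation $l_{\rho}(C_k)=\sum_j a_{kj}\sigma^{\rho}(B_j)$ and then invoke Theorem 2.1 --- is sound and in fact uses only material the paper already records: the linear relation is exactly Proposition 2.6 (also quoted from Martelli, Proposition 7.4.8), and your determinant check is right in both combinatorial cases (for type (I) the matrix is, up to row signs, the incidence matrix $\bigl(\begin{smallmatrix}1&0&1\\1&1&0\\0&1&1\end{smallmatrix}\bigr)$ with determinant $2$; for type (II) it is triangular with nonzero diagonal). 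From $L\circ\sigma_{\mathcal L}=\ell$ with $L$ invertible you get $\sigma_{\mathcal L}=L^{-1}\circ\ell$, hence injectivity, injective differential, and openness of the image all at once; combined with your observation that each $\sigma^{\rho}(B_i)$ is a cross ratio of ideal points of the developing map and hence real-analytic in $\rho$, this does give an analytic embedding. This reduction is arguably cleaner than reproving the statement from scratch, and it also explains the explicit cone-shaped ranges the paper lists right after the proposition.

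The one step you should repair is the proposed derivation of the relation $l_{\rho}(C_k)=\sum_j a_{kj}\sigma^{\rho}(B_j)$ as a product of shear matrices ``one factor per crossing with a lift of $B_j$.'' The leaves $B_j$ are not transverse to $C_k$: they spiral into it, so the closed geodesic $C_k$ meets no $B_j$, and any loop freely homotopic to $C_k$ but pushed off it crosses each spiraling leaf infinitely many times. The correct computation (as in Martelli's proof of 7.4.8) tracks how the holonomy of $C_k$ permutes the lifts of the spiraling leaves accumulating on the axis $\tilde{C}_k$ and identifies the translation length with the signed sum of shears between consecutive lifts in a fundamental domain; it is not a finite product of crossings. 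Since the paper states this relation as Proposition 2.6 with a citation, the cleanest fix is simply to invoke that proposition rather than rederive it, after which your argument is complete.
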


To describe the range of this parameterization, we consider the relation between the shearing parameter and the boundary length, both of which determine hyperbolic structures of $P$.
For a closed leaf $C_i$, we suppose that biinfinite leaves $B_1, \cdots, B_k$ spiral to $C_i$.
\begin{proposition}[\cite{Ma}, Proposition 7.4.8]
If the spiraling of $B_i$ is positive, then
\[  l_{\rho}(C_i) = \sum_{j=1}^k \sigma^{\rho}(B_j), \]
and if the spiraling of $B_i$ is negative, then 
\[ l_{\rho}(C_i) = - \sum_{j=1}^k \sigma^{\rho}(B_j). \]
\end{proposition}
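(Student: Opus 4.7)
The plan is to work in the universal cover and telescope the cross-ratio formula of Proposition 2.6. Pick a lift $\tilde{C}_i$ of $C_i$ in $\tilde{P}$ and identify it via the developing map $f_{\rho}$ with the positive imaginary axis in $\mathbb{H}^2$, with $\tilde{P}$ on the right and with the generator $\gamma_i$ of the stabilizer of $\tilde{C}_i$ acting as $z\mapsto \lambda z$ for $\lambda=e^{l_{\rho}(C_i)}>1$. Under the positive-spiraling convention (with respect to the orientation of $P$ inherited on $\mathbb{H}^2$), every lift of a biinfinite leaf adjacent to $\tilde{C}_i$ shares the ideal endpoint $\infty$ with $\tilde{C}_i$, while in the negative case it shares the endpoint $0$; I treat the positive case first.

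Enumerate the positive-spiraling lifts $L_n$ by their other endpoint $a_n\in\mathbb{R}_{>0}$ in increasing order. A fundamental domain for $\gamma_i$ contains $k$ of them, so $a_{n+k}=\lambda a_n$, and $B_1,\ldots,B_k$ are precisely the underlying leaves of these lifts, with a leaf spiraling to $C_i$ at both ends counted twice. The lifts divide the $\tilde{P}$ side of $\tilde{C}_i$ into a fan of ideal triangles $T_n=\triangle(\infty,a_n,a_{n+1})$, with $L_n$ separating $T_{n-1}$ from $T_n$. In the counterclockwise cyclic order $(\infty,a_{n-1},a_n,a_{n+1})$ on $\partial\mathbb{H}^2$, Proposition 2.6 together with the limiting value $z(a,b,\infty,d)=(d-a)/(b-a)$ gives
\[ \sigma^{\rho}(B_{j(n)})=\sigma(T_{n-1},T_n)=\log\frac{a_{n+1}-a_n}{a_n-a_{n-1}}, \]
where $j(n)$ indexes the leaf of which $L_n$ is a lift. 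Summing over the fundamental domain, the product telescopes and one uses $a_{k+1}-a_k=\lambda(a_1-a_0)$ to obtain
\[ \sum_{j=1}^{k}\sigma^{\rho}(B_j)=\log\prod_{n=1}^{k}\frac{a_{n+1}-a_n}{a_n-a_{n-1}}=\log\frac{a_{k+1}-a_k}{a_1-a_0}=\log\lambda=l_{\rho}(C_i). \]

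For the negative-spiraling case, the lifts instead share the endpoint $0$ with $\tilde{C}_i$; equivalently, after swapping $0$ and $\infty$ by an orientation-reversing change of coordinates, the translation $\gamma_i$ becomes $z\mapsto \lambda^{-1}z$. Repeating the telescoping for the fan based at $0$ then yields $\sum_{j}\sigma^{\rho}(B_j)=-l_{\rho}(C_i)$. The main obstacle in writing this rigorously is the orientation bookkeeping: one must verify that the sign of the spiraling matches the choice of common endpoint of the fan of lifts, using the fixed orientations of $\mathbb{H}^2$ and of $P$, and one must be careful with the multiplicity convention for a biinfinite leaf both of whose ends spiral to $C_i$. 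Once these conventions are in place the telescoping is essentially forced: the translation length of $\gamma_i$ along $\tilde{C}_i$ is recorded by the product of successive gap ratios in the spiraling fan at one of its ideal endpoints.
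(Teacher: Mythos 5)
The paper gives no proof of this proposition: it is quoted from Martelli [Ma, Proposition 7.4.8], so there is no in-paper argument to compare with, and what you have written is essentially the standard (and Martelli's) proof. Your key computation is correct: for the fan of lifts $[\infty,a_n]$ on the $\tilde P$-side of $\tilde C_i$, the paper's sign convention --- equivalently the cross-ratio formula of Proposition 2.4 (note that you cite ``Proposition 2.6'' for this, which is the statement being proved; you mean 2.4) --- gives $\sigma(T_{n-1},T_n)=\log\frac{a_{n+1}-a_n}{a_n-a_{n-1}}$, and telescoping against $a_{n+k}=\lambda a_n$ yields $\log\lambda=l_\rho(C_i)$. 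Three loose ends remain. (a) The negative case: passing from the fan at $0$ to a fan at $\infty$ by an orientation-reversing coordinate change negates every shearing parameter (left and right are exchanged), and that is where the minus sign actually comes from; it is safer to compute the fan at $0$ directly, where $\sigma(T_{n-1},T_n)=\log\frac{a_{n+1}-a_n}{a_n-a_{n-1}}+\log\frac{a_{n-1}}{a_{n+1}}$ and the second term telescopes to $-2\log\lambda$, giving $-l_\rho(C_i)$ in total. (b) The identification of the sign of the spiraling with the common ideal endpoint of the fan, which you explicitly defer, is genuinely part of the statement and must be checked against the convention of Figure 4; as you say, the rest is forced once this is pinned down. (c) Your multiplicity convention (a leaf spiraling to $C_i$ at both ends counted twice) is the one under which the identity is true --- the sum is really over spiraling ends --- and it is worth noting that this is in tension with the paper's displayed range for type (II) laminations, where $x_{ii}$ appears with coefficient $1$.
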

Consider a maximal geodesic lamination of type (I).
When we represent the lamination by $\mathcal{L} = \{ C_1, C_2, C_3, B_{12}, B_{23}, B_{31} \}$,  the shearing parameterization associated to $\mathcal{L}$ is defined by $\sigma_{\mathcal{L}}(\rho) = (\sigma^{\rho}(B_{12}), \sigma^{\rho}(B_{23}), \sigma^{\rho}(B_{31}))$.
The range of this parameterization is described as follows;
\[ \{ (x_{12}, x_{23}, x_{31}) \in \mathbb{R}^3 ~|~\forall i,j,k ~\mbox{with}~ \{i,j,k\} = \{1,2,3\}, {\rm sgn}(C_i) ( x_{ij} + x_{ik} ) > 0 \}, \]
where sgn($C_i$) is the signature of spiraling along $C_i$.
In the case of laminations of typer (II), we consider $\mathcal{L} = \{ C_1, C_2, C_3, B_{ii}, B_{ij}, B_{ik} \}$ and the associated shearing parameterization $\sigma_{\mathcal{L}}(\rho) = (\sigma^{\rho}(B_{ii}), \sigma^{\rho}(B_{ij}), \sigma^{\rho}(B_{ik}))$.
The range of this parameterization is equal to the following space;
\[ \{ (x_{ii}, x_{ij}, x_{ik}) \in \mathbb{R}^3 ~|~x_{ij} > 0, x_{ik} > 0, {\rm sgn}(C_i)(x_{ii}+x_{ij} + x_{ik}) > 0 \}. \]

\subsection{Fenchel-Nielsen coordinate and twist deformations}
In this subsection, we recall the Fenchel-Nielsen coordinate, which is a global coordinate of $\mathscr{T}(S)$.
See the detail in Section 3.2 of \cite{IT}.  
To define this coordinate, we recall a pants decomposition of surfaces.
It is known that any compact orientable surface $S$ of negative Euler characteristic number $\chi(S)$ with $b$ boundary components is decomposed into $|\chi(S)|$ pairs of pants by a family of $\frac{3|\chi(S)| - b}{2}$ disjoint simple closed curves .
If $S$ is decomposed into pairs of pants $\mathcal{P} = \{ P_1, \cdots, P_{|\chi(S)|} \}$ along simple closed curves $\mathcal{C} = \{ C_1, \cdots, C_{\frac{3|\chi(S)| - b}{2}}\}$, we call $\mathcal{P}$ a pants decomposition of $S$ and $C_i \in \mathcal{C}$ the decomposing curves of $\mathcal{P}$.
We suppose that $C_i$ is geodesic. 

The Fenchel-Nielsen coordinate is a coordinate of $\mathscr{T}(S)$ by the hyperbolic length of decomposing curves and the twist parameter along decomposing curves.
We define the twist parameter.
Suppose two pairs of pants $P_1$ and $P_2$ are glued along the closed geodesic $C$ which is a boundary component of $P_1$ and $P_2$.
We fix a hyperbolic structure $\rho$ of $P_1 \cup_C P_2$.
For each pants, we fix an orthogonal arc $H_i$ which joins $C$ and an other boundary component of $P_i$.
Such an arc exists since there is an isometric involution of a pair of pants, and its fixed set consists of three geodesics which are orthogonal to two boundary components.
One can choose this geodesic as an orthogonal arc.
To define the twist parameter, we lift $P_i$ to $\tilde{P}_i$ which is a subset of the universal covering of $P_1 \cup_C P_2$ so that $\tilde{P}_i$ are adjacent.
Take lifts $\tilde{C}$ and $\tilde{H}_i$ of the arcs $C$ and $H_i$ so that they are on $\tilde{P}_i$.
Then the {\it twist parameter} $\theta^{\rho}(C)$ along $C$ is defined  by 
\[ \theta^{\rho}(C) = 2 \pi \dfrac{{\rm Length}_{\rho}(H_1, H_2)}{{\rm Length}_{\rho}(C)} \]
where ${\rm Length}_{\rho}(C)$ is the $\rho$-length of the closed curve $C$, and ${\rm Length}_{\rho}(H_1, H_2)$ is the signed $\rho$-length between the end points $\tilde{H}_1 \cap \tilde{C}$ and $\tilde{H}_2 \cap \tilde{C}$.
The signature of ${\rm Length}_{\rho}(H_1, H_2)$ is positive if $\tilde{H}_1$ and $\tilde{H}_2$ are as in Figure 6.

\begin{figure}[htbp]
\begin{center}
\includegraphics[width = 7cm, height=4cm]{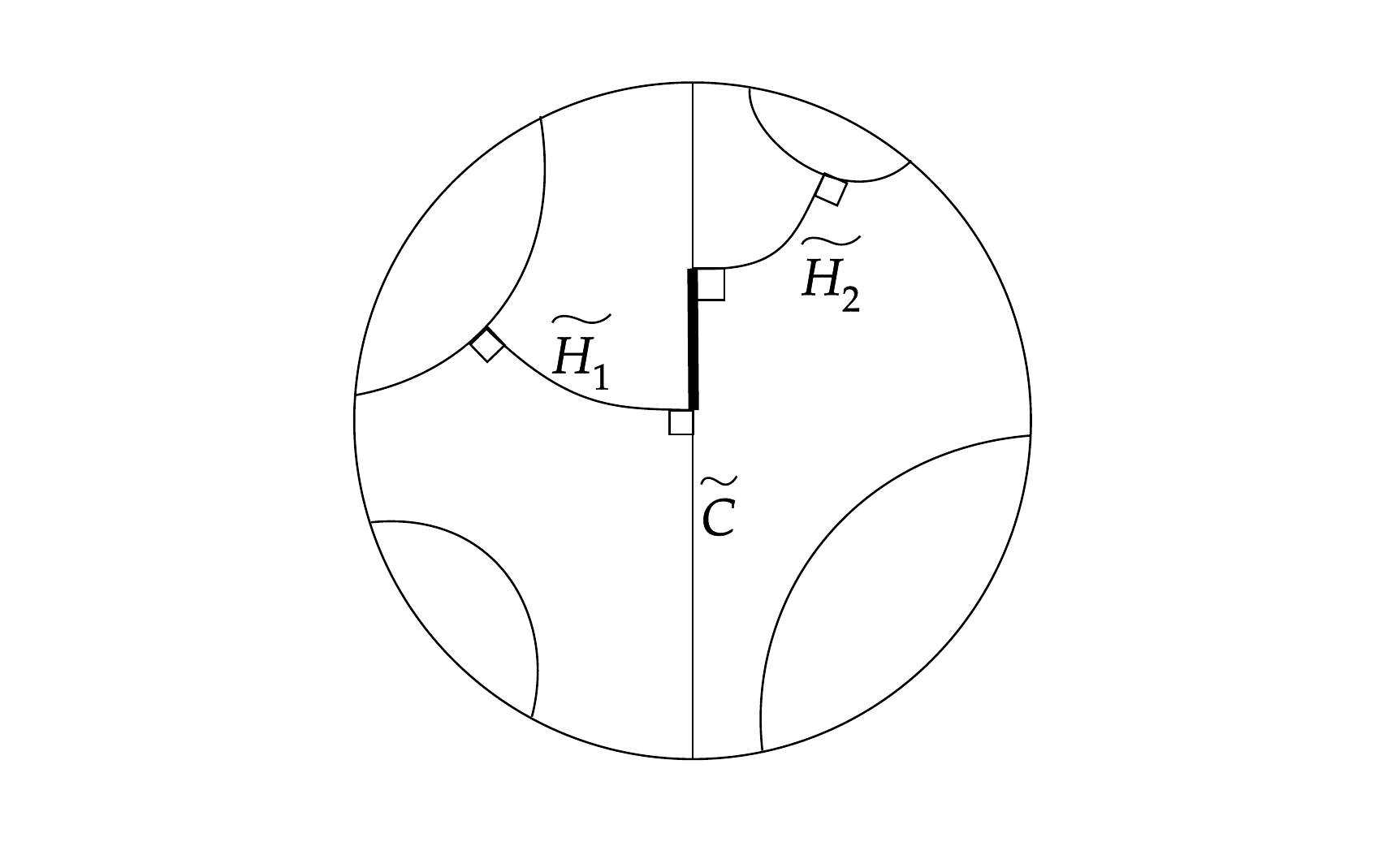}
\caption{The twist parameter is positive.}
\end{center}
\end{figure}

Fix a hyperbolic structure $\rho$ of $S$ and a pants decomposition of $S$.
We denote boundary components of $S$ by $\partial_1, \cdots, \partial_b$.
For a pants decomposition of $S$ by $\{ C_i \}$, the {\it Fenchel-Nielsen coordinate} is defined by
\[ FN : \mathscr{T}(S) \rightarrow \mathbb{R}^{3|\chi(S)|} : \rho \mapsto (l_{\rho}(C_i), \cdots, l_{\rho}(\partial_i), \cdots,  \theta^{\rho}(C_i), \cdots ). \]

We recall the twist deformation of hyperbolic structures which corresponds to the change of twist parameters.
Let $C$ be a decomposing curve of a pants decomposition of $S$ and $\rho$ be a hyperbolic structure of $S$.
We take the preimage $\mathscr{C}$ of $C$ by the covering $f_{\rho} (\tilde{S}) \rightarrow S_{\rho}$, which is a geodesic lamination of $\mathbb{H}^2$.
Choose a leaf $\tilde{C} \in \mathscr{C}$.
The geodesic $\tilde{C}$ is a side of two plaque $Q^l$ and $Q^r$ of $\mathscr{C}$.
We orient $\tilde{C}$ so that the plaque $Q^l$ is on the left of $\tilde{C}$ with respect to the orientation.
Let ${\rm tw}_t$ be a hyperbolic isometry with the axis $\tilde{C}$, which is conjugate to 
\[
\begin{bmatrix}
\exp(t) && 0 \\
0 && \exp(-t) \\
\end{bmatrix}
\]
by the normalization which sends the attracting (resp. repelling) point of $\tilde{C}$ to $\infty$ (resp. 0).
Glue ${\rm tw}_t(Q^l)$ and $Q^r$ along $\tilde{C}$.
Iterating this operation for all leaves of $\mathscr{C}$, we obtain a new developing map ${\rm Tw}_t \circ f_{\rho} : \tilde{S} \rightarrow \mathbb{H}^2$ where ${\rm Tw}_t$ is a map $\mathbb{H}^2 \rightarrow \mathbb{H}^2$ induced by the iteration.
The developing map induces an element $\eta$ of $\mathscr{T}(S)$.
We call $\eta$ a {\it twist deformation} of $\rho$.

\section{Hitchin representations and their properties}

\subsection{Hitchin components}

Let $\Gamma$ be a finitely generated group.
The ${\rm PSL}_n\mathbb{R}$-{\it representation variety} $\mathcal{R}_n(\Gamma)$ of $\Gamma$ is the set of group homomorphisms $\mathcal{R}_n(\Gamma)={\rm Hom}(\Gamma, {\rm PSL}_n(\mathbb{R}))$ with the compact open topology.
${\rm PSL}_n\mathbb{R}$ acts on the representation variety by conjugation.
The quotient space $\mathcal{X}_n(\Gamma)=\mathcal{R}_n(\Gamma)/ {\rm PSL}_n(\mathbb{R})$ is called the ${\rm PSL}_n(\mathbb{R})$-{\it character variety}. 
When the finitely generated group $\Gamma$ is the fundamental group of a manifold $M$, we denote the representation (resp. character) variety $\mathcal{R}_n(\pi_1(M))$ (resp. $\mathcal{X}_n(\pi_1(M))$) by $\mathcal{R}_n(M)$ (resp. $\mathcal{X}_n(M)$) simply.
When $\Gamma = \pi_1(S)$, then the Teichm\"uller space $\mathscr{T}(S)$ is naturally embedded in the character variety $\mathcal{X}_2(S)$ by definition.
It is known that $\mathscr{T}(S)$ is a connected component of $\mathcal{X}_2(S)$.
(See \cite{Go88}.)

The Hitchin component is a perfered component of $\mathcal{X}_n(S)$ which contains $\mathscr{T}(S)$.
Let us consider an irreducible representation ${\rm SL}_2\mathbb{R} \rightarrow {\rm SL}_n\mathbb{R}$ which is unique up to equivalence.
This representation is obtained by the symmetric power.
We denote its projectivization ${\rm PSL}_2\mathbb{R} \rightarrow {\rm PSL}_n\mathbb{R}$ by $\iota_n$.
The representation $\iota_n$ induces a map between character varieties $(\iota_n)_* : \mathcal{X}_2(S) \rightarrow \mathcal{X}_n(S)$ by the correspondence $\rho \mapsto \iota_n \circ \rho$.
Since $\iota_n$ is a group homomorphism, this induced map is well-defined.
When $\partial S = \emptyset$, the Hitchin component is defined as below.
\begin{definition}
The $({\rm PSL}_n\mathbb{R}-)$ Hitchin component $H_n(S)$ is the connected component of $\mathcal{X}_n(S)$ which contains the image $F_n(S) = (\iota_n)_*(\mathscr{T}(S))$.
\end{definition}
When $\partial S \neq \emptyset$, a representation $\rho : \pi_1(S) \rightarrow {\rm PSL}_n\mathbb{R}$ is said to be {\it purely loxodromic} if the image of boundary components via $\rho$ is conjugate to an element in the interior of a Weyl chamber, so an element with distinct, only real eigenvalues.
We denote the space of purely-loxodromic representations by $\mathcal{R}_n^{loxo}(S)$, and $\mathcal{X}_n^{loxo}(S) = \mathcal{R}_n^{loxo}(S) / {\rm PSL}_n\mathbb{R}$.
Note that $(\iota_n)_*(\mathscr{T}(S))$ consists of only purely loxodromic elements.
The (${\rm PSL}_n\mathbb{R}$-) Hitchin components $H_n(S)$ is the connected component of $\mathcal{X}^{loxo}_n(S)$ which contains the image $F_n(S)=(\iota_n)_*(\mathscr{T}(S))$.

We call the image $F_n(S)$ of $\mathscr{T}(S)$ the {\it Fuchsian locus} of $H_n(S)$.
{\it Hitchin representations} are representations $\rho : \pi_1(S) \rightarrow {\rm PSL}_n\mathbb{R}$ whose conjugacy class belongs to $H_n(S)$.
A Hitchin representation $\rho$ is ${\rm PSL}_n\mathbb{R}$-{\it Fuchsian} if $\rho$ is contained in $F_n(S)$, {\it i.e.} there is a Fuchsian representation $\rho_0 : \pi_1(S) \rightarrow {\rm PSL}_2\mathbb{R}$ such that $\rho = \iota_n \circ \rho_0$. 

We remark the homeomorphic type of Hitchin components of closed surfaces.

\begin{theorem}[Hitchin \cite{Hi92} ]
If the surface $S$ is closed, the Hitchin component $H_n(S)$ is homeomorphic to $\mathbb{R}^{(2g-2)(n^2-1)}$.

\end{theorem}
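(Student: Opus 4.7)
The plan is to follow Hitchin's original argument via Higgs bundles, which identifies $H_n(S)$ with a complex vector space of holomorphic differentials. First, fix a complex structure on $S$, giving a compact Riemann surface $X$ of genus $g$, and invoke the nonabelian Hodge correspondence (Corlette, Donaldson, Hitchin, Simpson) to identify the character variety $\mathcal{X}_n(S)$ homeomorphically with the moduli space $\mathcal{M}_n(X)$ of polystable ${\rm SL}_n\mathbb{C}$-Higgs bundles $(E,\phi)$ of trivial determinant on $X$. Under this identification, the image of the Fuchsian locus $F_n(S)$ consists of Higgs bundles built from uniformization composed with $\iota_n$.

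Next I would construct the Hitchin section. Fix a square root $K^{1/2}$ of the canonical bundle; then for each tuple $(q_2,\ldots,q_n)\in\bigoplus_{i=2}^n H^0(X,K^i)$ one defines a Higgs bundle with underlying holomorphic bundle $E_0=K^{(n-1)/2}\oplus K^{(n-3)/2}\oplus\cdots\oplus K^{-(n-1)/2}$ and Higgs field $\phi$ given by the companion-matrix formula whose subdiagonal is the canonical identification $K^{(2k-n+1)/2}\to K^{(2k-n-1)/2}\otimes K$ and whose last column encodes the $q_i$. This yields a continuous map
\[ s : \bigoplus_{i=2}^n H^0(X, K^i) \longrightarrow \mathcal{M}_n(X), \]
with $s(0,\ldots,0)$ lying in the image of $F_n(S)$. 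To finish one checks that $s$ is a homeomorphism onto the connected component of $\mathcal{X}_n(S)$ containing $F_n(S)$, i.e.\ onto $H_n(S)$: the map $s$ is a section of the proper Hitchin fibration $h:\mathcal{M}_n(X)\to\bigoplus_{i=2}^n H^0(X,K^i)$ sending $(E,\phi)$ to the characteristic coefficients of $\phi$, and Morse theory for the circle action $(E,\phi)\mapsto(E,e^{i\theta}\phi)$, with moment map $\|\phi\|_{L^2}^2$, shows that every Higgs bundle in the Hitchin component flows under the gradient of this energy to a point in the image of $s$.

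The dimension count is a Riemann--Roch calculation: since $\deg K^i=2i(g-1)$ and $H^1(X,K^i)=0$ for $i\geq 2$, one has $\dim_{\mathbb{C}}H^0(X,K^i)=(2i-1)(g-1)$, whence
\[ \dim_{\mathbb{C}}\bigoplus_{i=2}^n H^0(X,K^i)=(g-1)\sum_{i=2}^n(2i-1)=(g-1)(n^2-1), \]
which gives real dimension $(2g-2)(n^2-1)$ and the claimed homeomorphism. The main obstacle is the Morse-theoretic step establishing that $s$ exhausts an entire connected component of the moduli space; this relies on the properness of $h$ and a careful analysis of the fixed loci of the $S^1$-action, and constitutes the technical heart of \cite{Hi92}. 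A second, more foundational, difficulty is the nonabelian Hodge theorem itself, for which one appeals to the existence of harmonic metrics on flat bundles over $X$.
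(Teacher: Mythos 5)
The paper offers no proof of this statement: it is quoted as background directly from Hitchin \cite{Hi92}, so there is no internal argument to compare yours against. Your sketch follows Hitchin's original Higgs--bundle route, and the skeleton is right: fix a complex structure, pass to the Higgs moduli space, build the section $s$ from the bundle $K^{(n-1)/2}\oplus\cdots\oplus K^{-(n-1)/2}$ with companion-matrix Higgs field, and do the Riemann--Roch count $\sum_{i=2}^n(2i-1)(g-1)=(n^2-1)(g-1)$, which is correct.

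Two points deserve sharpening. First, the nonabelian Hodge correspondence as you state it identifies the \emph{complex} character variety with the moduli space of polystable ${\rm SL}_n\mathbb{C}$-Higgs bundles; the ${\rm PSL}_n\mathbb{R}$-character variety of the paper is only the fixed locus of an anti-holomorphic involution inside it, so you must check that the image of $s$ consists of Higgs bundles whose flat connections have holonomy in the split real form (this is where the symmetric pairing on $E_0$ and the reality of the $q_i$ enter). Second, the step "every Higgs bundle in the Hitchin component flows under the gradient of $\|\phi\|^2_{L^2}$ to a point in the image of $s$" is not Hitchin's argument and would not by itself give a homeomorphism even if true. What Hitchin actually shows is that $h\circ s=\mathrm{id}$ together with properness of $h$ makes the image of $s$ closed, while smoothness of the real character variety at points of the image plus the dimension count makes it open; openness, closedness, and connectedness of $\bigoplus_{i\ge 2}H^0(X,K^i)$ then force the image to be exactly one connected component. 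The $S^1$-action and the Morse theory of $\|\phi\|^2$ are used elsewhere (to count components of the full real character variety), not to retract the Hitchin component onto the section. With those two corrections your outline matches the cited proof.
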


\begin{remark}
If it is clear from context, we call ${\rm PSL}_n\mathbb{R}$-Fuchsian representations Fuchsian representations simply.
In addition to, if we confuse Fuchsians representations which are elements of the Teichm\"uller space,  and ${\rm PSL}_n\mathbb{R}$-Fuchsian representations, we call Fuchsian representations hyperbolic holonomy.
\end{remark}
\begin{caution}
In the following, we consider only closed surfaces.
Non-closed case is discussed in Section 7.
\end{caution}
\subsection{Hyperconvex property}
The projective special linear group ${\rm PSL}_n\mathbb{R}$ acts on the projective space $\mathbb{RP}^{n-1} = P(\mathbb{R}^n)$ by the projectivization of linear action of ${\rm SL}_n\mathbb{R}$ on $\mathbb{R}^n$.
We define the hyperconvexity of projective linear representations of $\pi_1(S)$.
Let $\partial \pi_1(S)$ be the ideal boundary of $\pi_1(S)$ which is a visual boundary of a Cayley graph of $\pi_1(S)$.
Note that $\partial \pi_1(S)$ is homeomorphic to $\partial \tilde{S}$ through a hyperbolic structure of $S$.
Therefore, in this paper, we identify $\partial \pi_1(S)$ with $\partial \tilde{S}$ by using the reference hyperbolic structure of $S$.
\begin{definition}
A representation $\rho : \pi_1(S) \rightarrow {\rm PSL}_n\mathbb{R}$ is said to be hyperconvex if there exists a $(\pi_1(S), \rho)$-equivariant continuous map $\xi_{\rho}:\partial \pi_1(S) \rightarrow \mathbb{RP}^{n-1}$ such that $\xi_{\rho}(x_1) + \cdots + \xi_{\rho}(x_n)$ is direct for any pairwise distinct points $x_1, \cdots, x_n \in \partial \pi_1(S)$.
\end{definition}
The associated curve $\xi_{\rho}$ is called the {\it hyperconvex curve} of $\rho$.
All Hitchin representations have hyperconvex property.
Labourie showed that Hitchin representations are hyperconvex by Anosov property which is explained in the next subsection.
Moreover the converse result was shown by Guichard.
Hence the following result holds.
\begin{theorem}[Guichard \cite{Gu08}, Labourie \cite{La06}]
A representation $\rho : \pi_1(S) \rightarrow {\rm PSL}_n\mathbb{R}$ is Hitchin if and only if $\rho$ is hyperconvex.

\end{theorem}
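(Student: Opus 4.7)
The theorem combines two deep results, so my plan splits along the two implications. For the forward direction (Hitchin implies hyperconvex), I would follow Labourie's route via the Anosov property. The starting point is the observation that the Fuchsian locus is manifestly hyperconvex: if $\rho = \iota_n \circ \rho_0$ for a Fuchsian $\rho_0$, then the boundary map is the Veronese composition $\partial \pi_1(S) \xrightarrow{\partial \rho_0} \mathbb{RP}^1 \hookrightarrow \mathbb{RP}^{n-1}$, sending a point to $[1:t:t^2:\cdots:t^{n-1}]$, and the hyperconvexity condition is exactly the nonvanishing of a Vandermonde determinant at distinct $t_1,\ldots,t_n$. I would then need to propagate this to all of $H_n(S)$. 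The strategy is to show that (i) for Fuchsian representations the geodesic flow admits an Anosov splitting of the associated flat bundle over $T^1 S$, with a decomposition into line bundles whose expansion rates strictly separate; (ii) the Anosov property is structurally stable and hence open in $\mathcal{X}_n(S)$; (iii) it is also closed within the Hitchin component because of the purely loxodromic control on boundary holonomies. Once Anosov-ness is established on $H_n(S)$, the equivariant flag map $\xi_\rho^\bullet : \partial\pi_1(S) \to \mathrm{Flag}(\mathbb{R}^n)$ exists and is continuous, and $\xi_\rho$ is its first factor; the direct-sum property follows from transversality of the Anosov splittings at the attracting/repelling fixed points of distinct primitive elements and density of such fixed points in $\partial \pi_1(S)$.

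For the converse (Guichard's direction), the plan is to show that the subset of $\mathcal{X}_n^{\mathrm{loxo}}(S)$ consisting of hyperconvex representations is open, closed, and intersects the Fuchsian locus, hence equals the connected component $H_n(S)$. Openness uses the implicit function theorem applied to the hyperconvex curve: since the direct-sum condition is open on $n$-tuples of lines, a small perturbation of $\rho$ yields, via a fixed-point argument on the space of equivariant continuous maps $\partial\pi_1(S) \to \mathbb{RP}^{n-1}$, a nearby hyperconvex curve for the perturbed representation. Closedness is the delicate point: given a convergent sequence $\rho_k \to \rho_\infty$ of hyperconvex representations with hyperconvex curves $\xi_k$, one must show (a) the $\xi_k$ subconverge to a continuous equivariant limit $\xi_\infty$, for which one uses the purely loxodromic assumption to pin down $\xi_k$ at fixed points of group elements and an equicontinuity argument elsewhere, and (b) the limit $\xi_\infty$ still satisfies the direct-sum property; this is the key technical step and relies on a positivity/cross-ratio inequality that survives the limit. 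The intersection with $F_n(S)$ is immediate from the Veronese construction above, so connectedness closes the argument.

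The main obstacle, in my view, is the closedness in the Guichard direction, specifically preventing degeneration of the direct-sum property under limits. The natural fix, which I expect to invoke, is to work not with hyperconvexity as a bare condition but with a quantitative positivity of cross ratios built from $\xi_\rho$, showing that this positivity is uniform in a neighborhood and passes to limits. The Anosov input from Labourie's half of the theorem supplies the dynamical framework needed to make these cross-ratio estimates, so in practice one proves both directions together rather than sequentially.
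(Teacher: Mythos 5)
The paper does not actually prove this statement: it is imported verbatim from Labourie \cite{La06} and Guichard \cite{Gu08}, so there is no internal argument to compare yours against. Judged on its own terms, your outline of the Labourie direction (the Fuchsian locus is hyperconvex via the Veronese embedding and a Vandermonde determinant; the Anosov splitting exists there, is open by structural stability, and is propagated across the component) matches the standard strategy in broad strokes, though for a closed surface the closedness step cannot rest on ``purely loxodromic control on boundary holonomies'' --- there are no boundary components --- and instead requires a compactness argument for limits of hyperconvex curves using the uniform Anosov estimates.

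The genuine gap is in the converse. Showing that the set $\mathcal{H}$ of hyperconvex representations is open, closed, and contains the Fuchsian locus only proves that $\mathcal{H}$ is a union of connected components of $\mathcal{X}_n(S)$, one of which is $H_n(S)$; in other words, it reproves ``Hitchin $\Rightarrow$ hyperconvex.'' It does not exclude the possibility that $\mathcal{H}$ meets some other component, which is precisely what ``hyperconvex $\Rightarrow$ Hitchin'' asserts cannot happen. What is needed, and what Guichard actually establishes, is that every hyperconvex representation can be joined to a Fuchsian one by a continuous path of hyperconvex representations --- equivalently, that $\mathcal{H}$ is connected. That is a substantial deformation argument carried out on the hyperconvex curves themselves (controlling their possible degenerations), not a formal consequence of openness and closedness. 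Your closing phrase ``connectedness closes the argument'' therefore presupposes the very fact that carries the weight of the theorem.
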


Moreover Labourie showed the following.
\begin{theorem}[\cite{La06}]
Let $\rho :\pi_1(S) \rightarrow {\rm PSL}_n\mathbb{R}$ be a hyperconvex representation with the hyperconvex curve $\xi_{\rho} : \partial \pi_1(S) \rightarrow \mathbb{RP}^{n-1}$.
Then there exists a unique curve $\xi_{\rho}^i : \partial \pi_1(S) \rightarrow {\rm Gr}^k(\mathbb{R}^n)$ with the properties from (i) to (iv) below.
\begin{itemize}
\item[(i)] $\xi^p(x) \subset \xi^{p+1}(x)$ for any $x \in \partial \pi_1(S) $.
\item[(ii)] $\xi^1(x) = \xi_{\rho}(x) $ for any $x \in \partial \pi_1(S) $.
\item[(iii)] If $n_1, \cdots , n_l$ are positive integers such that $ \sum n_i \leq n$, then $\xi^{n_1}(x_1) + \cdots + \xi^{n_l}(x_l)$ is direct for any pairwise distinct points $x_1, \cdots, x_l \in \partial \pi_1(S)$.
\item[(iv)] If $n_1, \cdots , n_l$ are positive integers such that $ p =\sum n_i \leq n$, then
\[ 
\lim_{(y_1, \cdots, y_l) \rightarrow x; y_i \mbox{distinct}}  \xi^{n_1}(y_1) + \cdots + \xi^{n_l}(y_l) \rightarrow \xi^p(x)
\]
\end{itemize} 

\end{theorem}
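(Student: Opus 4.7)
The plan is to construct the flag curve $\xi_\rho^p$ by an iterated limit procedure from $\xi^1 = \xi_\rho$, and to deduce uniqueness directly from property (iv). Uniqueness is almost automatic: applying (iv) with $n_1 = \cdots = n_p = 1$ gives
\[ \xi_\rho^p(x) \;=\; \lim_{\substack{y_1,\ldots,y_p \to x \\ y_i \text{ distinct}}} \xi^1(y_1) + \cdots + \xi^1(y_p), \]
so any curve satisfying (i)--(iv) is completely determined by $\xi_\rho$.

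For existence I would proceed in three steps. First, define $\xi_\rho^p$ at the attracting fixed point $\gamma^+$ of each nontrivial $\gamma \in \pi_1(S)$. Hyperconvexity, combined with the equivariance $\xi_\rho \circ \gamma = \rho(\gamma) \circ \xi_\rho$, forces $\rho(\gamma)$ to be proximal on $\mathbb{RP}^{n-1}$: the iterates $\rho(\gamma)^k \xi^1(y) = \xi^1(\gamma^k y)$ converge to $\xi^1(\gamma^+)$ for every $y \neq \gamma^{-}$, so $\xi^1(\gamma^+)$ is an attracting fixed line. Applying the analogous argument to the induced action of $\rho(\gamma)$ on each Grassmannian ${\rm Gr}^p(\mathbb{R}^n)$, and invoking the full direct-sum hypothesis of hyperconvexity, one obtains proximality in every dimension, hence $n$ real eigenvalues of pairwise distinct moduli $|\lambda_1| > \cdots > |\lambda_n|$ with $\lambda_i$-eigenlines $L_i(\gamma)$. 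Set $\xi_\rho^p(\gamma^+) = L_1(\gamma) \oplus \cdots \oplus L_p(\gamma)$; equivariance under $\pi_1(S)$ is then immediate from $(\eta\gamma\eta^{-1})^+ = \eta \cdot \gamma^+$. Second, verify that the candidate limit in (iv) converges at these fixed points: for $y_1,\ldots,y_p$ in a small neighborhood of $\gamma^+$, the attracting dynamics of $\rho(\gamma)^k$ drags the span of $\xi^1(y_1),\ldots,\xi^1(y_p)$ toward $\xi_\rho^p(\gamma^+)$, while the direct-sum property of $\xi_\rho$ prevents this span from dropping rank. Third, extend $\xi_\rho^p$ from the dense set of fixed points of hyperbolic elements to all of $\partial \pi_1(S)$ by continuity. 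Properties (i) and (ii) are visible from the construction; (iii) holds at tuples of fixed points as an eigenspace decomposition and passes to the limit; (iv) follows by induction on $p$ using the same transversality estimates.

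The main obstacle is the quantitative transversality required in the second and third steps: one must show that as $y_1,\ldots,y_p$ coalesce toward any $x \in \partial \pi_1(S)$, the $p$-plane they span does not degenerate to a subspace of dimension less than $p$, and that the limit exists as a genuine point of ${\rm Gr}^p(\mathbb{R}^n)$. Labourie's resolution of this is dynamical: Hitchin representations are Anosov, the subject of the next subsection, and the uniform hyperbolic splitting of the associated flat bundle over the unit tangent bundle of $S_\rho$ yields continuity of the limit in (iv) uniformly in $x$. Without this input the limit can collapse; with it, the three steps assemble into the unique flag curve claimed by the theorem.
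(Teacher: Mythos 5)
The paper does not prove this statement; it is quoted verbatim from Labourie \cite{La06} (Theorem~3.7 in the text), so there is no in-paper argument to compare yours against. Measured against Labourie's actual proof, your uniqueness argument is correct and complete: property (iv) with $n_1=\cdots=n_p=1$ together with (ii) expresses $\xi^p(x)$ as a limit of sums of values of $\xi_\rho$, so any curve satisfying (i)--(iv) is determined by $\xi_\rho$. Your existence outline also follows the right strategy (proximal dynamics at attracting fixed points, eigenflag definition there, extension by continuity), and your identification of the obstacle --- non-degeneration of the span of $\xi^1(y_1),\ldots,\xi^1(y_p)$ as the $y_i$ coalesce --- is exactly where the difficulty sits.

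The gap is that this obstacle is named but not overcome, and it is not a technicality: it is essentially the entire content of Labourie's theorem. Writing ``the Anosov splitting of the flat bundle over $T^1 S_\rho$ yields continuity of the limit in (iv) uniformly in $x$'' asserts the conclusion rather than deriving it; Labourie's derivation requires constructing the splitting, relating its contraction rates in successive exterior powers to the putative osculating flag, and proving the uniform transversality estimates from that. Two further points need attention. First, the hypothesis here is only hyperconvexity, and the Anosov property is not part of that hypothesis: you must either route through Guichard--Labourie (hyperconvex $\Leftrightarrow$ Hitchin, Theorem~3.6 of the paper) or prove Anosovness directly, neither of which is free. Second, your step (iii) ``holds at tuples of fixed points as an eigenspace decomposition and passes to the limit'' is not automatic --- directness of a sum is an open condition, and openness does not pass to limits without a uniform lower bound on transversality, which is again the same missing estimate. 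So the proposal is a faithful roadmap of Labourie's proof, but as a proof it has a genuine hole at its analytic core.
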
 
Theorem 3.7 implies that any hyperconvex curves are extended to curves into the flag manifold.
(See Section 4.1 for the precise definition of flags.)
The map $(\xi^1, \cdots, \xi^{n-1}) : \partial \pi_1(S) \rightarrow {\rm Flag}(\mathbb{R}^n)$ is called the {\it (osculating) flag curve} of the hyperconvex curve $\xi_{\rho}$.

We can explicitly describe the hyperconvex curve of Fuchsian representations.
Let $\rho_n = \iota_n \circ \rho$ be a Fuchsian representation.
Recall that the irreducible representation $\iota_n$ is defined by symmetric power of the representation $({\rm SL}_2\mathbb{R}, \mathbb{R}^2)$.
We identify $\mathbb{R}^n$ with $Sym^{n-1}(\mathbb{R}^2)$.
Consider the Veronese embedding $\nu : \mathbb{RP}^1 \rightarrow \mathbb{RP}^{n-1}$ defined by sending $[a: b]$ to $[a^{n-1} : a^{n-2}b : \cdots : b^{n-1}]$.
Then the composition $\nu \circ f_{\rho}$ of the Veronese embedding with the developing map gives the hyperconvex curve of $\rho_n$.
Using homogeneous polynomials, the flag is also described explicitly.
The symmetric power $Sym^{n-1}(\mathbb{R}^2)$, which is identified with $\mathbb{R}^n$, is also identified with the vector space 
\[ {\rm Poly}_n(X,Y) = \{ a_1 X^{n-1} + a_2 X^{n-2}Y + \cdots + a_n Y^{n-1} ~|~a_i \in \mathbb{R} \} \]  
of homogeneous polynomials of degree $n-1$.
If we denote a canonical basis of $Sym^{n-1}(\mathbb{R}^2)$ by $e_1^{n-1}, e_1^{n-2} \cdot e_2, \cdots , e_2^{n-1}$, where $e_1, e_2$ are canonical basis of $\mathbb{R}^2$, the identification is defined by mapping the vector $e_1^i \cdot e_2^{n-1-i}$ to $\binom{n-1}{i}X^iY^{n-1-i}$.
Then the one dimensional subspace $\nu ([a:b])$ is equal to $\mathbb{R} <(aX+bY)^{n-1}>$ in the vector space ${\rm Poly}_n(X,Y)$. 
In addition to the flag curve associated to $\nu$, which is again denoted by $\nu$, is defined by $ \{ P(X,Y) \in {\rm Poly}_n(X,Y) ~|~ \exists Q(X,Y) ~s.t.~ P(X,Y) = (aX + bY)^{n-d} Q(X,Y) \}$, which is a $d$-dimensional subspace. 
We call this flag curve $\nu \circ f_{\rho}$ the {\it Veronese flag curve}.
This Veronese flag curve is the flag curve of Fuchsian representations.

\subsection{Anosov property}
We recall the Anosov property of representations which is strongly related to the hyperconvexity of representations.
See \cite{GGKW17} for the detail.
Let $G$ be a semisimple Lie group and $K$ be a maximal compact Lie group.
The Lie algebra of $G$, denoted by $\mathfrak{g}$, is decomposed into $\mathfrak{k} \oplus \mathfrak{p}$ by the Killing form, where $\mathfrak{k}$ is the Lie algebra of $K$.
We take a maximal abelian subalgebra $\mathfrak{a} \subset \mathfrak{p}$.
Let $\mathfrak{g} = \mathfrak{g}_0 \oplus \bigoplus_{\alpha \in \Sigma}\mathfrak{g}_{\alpha}$ be a root decomposition where $\Sigma$ is the system of restricted roots of $\mathfrak{g}$.
We denote the set of positive roots by $\Sigma^+ = \{ \alpha \in \Sigma ~|~ \alpha > 0 \}$ and the set of negative roots by $\Sigma^- = \{ \alpha \in \Sigma ~|~ \alpha < 0 \}$.
The set $\Delta \subset \Sigma^+$ is the set of simple roots.
Let $\mathfrak{n}^{\pm} = \bigoplus_{\alpha \in \Sigma^{\pm}} \mathfrak{g}_{\alpha}$ and $N^{\pm} = \exp(\mathfrak{n}^{\pm})$.
For a subset $\theta \subset \Delta$, we set $\mathfrak{a}_{\theta} = \bigcap_{\alpha \in \theta}{\rm Ker}\alpha$, and $M_{\theta} = Z_K(\mathfrak{a}_{\theta})$, the centralizer of $\mathfrak{a}_{\theta}$ in $K$.
The subgroup $P_{\theta} = M_{\theta}\exp(\mathfrak{a})N^+$ is called a parabolic subgroup of $G$.
Two parabolic subgroups are said to be opposite if their intersection is reductive.
It is known that any pair of opposite parabolic subgroups is conjugate to a pair $(P_{\theta}, P^-_{\theta})$ for a subset $\theta \subset \Delta$ where $P^-_{\theta} = M_{\theta}\exp(\mathfrak{a})N^-$.
Let $\mathfrak{a}^+ = \{ a \in \mathfrak{a} ~|~ \alpha(a) > 0 ~\forall \alpha \in \Sigma^+ \}$ which is called a Weyl chamber.
There is a decomposition of $G$ into $K \exp(\bar{\mathfrak{a}}^+) K$, called the Cartan decomposition.
In particular, any element $g \in G$, there exists $k, k' \in K$ and a unique $\mu(g) \in \bar{\mathfrak{a}}^+$ such that $g = k \exp(\mu(g))k'$.
The correspondence $\mu : G \rightarrow \bar{\mathfrak{a}}^+$ is called the Cartan projection. 

For a parabolic subgroup $P_{\theta}$, the homogeneous space $G/P_{\theta}$ is called a flag manifold.
Flag manifolds $G/P_{\theta}$ are identified with the set of conjugates of $P_{\theta}$ in $G$ which are also parabolic subgroups.
Consider two maps $\xi^+ : \partial \Gamma \rightarrow G/P_{\theta}$ and $\xi^- : \partial \Gamma \rightarrow G/P^-_{\theta}$ from the ideal boundary of a hyperbolic group $\Gamma$ into flag manifolds.
The maps $\xi, \xi^-$ are said to be transverse if $\xi^+(x)$ and $\xi^-(y)$, which are identified with parabolic subgroups, are opposite for any distinct points $x,y \in \partial \Gamma$.
Moreover they are said to be dynamics-preserving for a representation $\rho : \Gamma \rightarrow G$ if for any $\gamma \in \Gamma$ of infinite order $\xi(\gamma^+)$ and $\xi^-(\gamma^+)$ are the attracting fixed point of $\rho(\gamma)$ where $\gamma^+ \in \partial \Gamma$ is the attracting fixed point of $\gamma$.

\begin{definition}
Let $\Gamma$ be a word hyperbolic group, G a semisimple Lie group, and $\theta \subset \Delta$ a nonempty subset of the restricted roots of $G$.
A representation $\rho : \Gamma \rightarrow G$ is said to be $P_{\theta}$-Anosov if
there exists continuous, $\rho$-equivariant and transverse maps $\xi^+ : \partial \Gamma \rightarrow G/P_{\theta}$ and $\xi^- : \partial \Gamma \rightarrow  G/P_{\theta}^-$ such that the maps $\xi^{\pm}$ are dynamics-preserving for $\rho$ and 
\[ \exists c, C > 0, \forall \alpha \in \theta, \forall \gamma \in \Gamma, \alpha(\mu(\rho(\gamma))) \geq c|\gamma| - C. \]   
\end{definition} 

In \cite{La06}, Labourie showed Hitchin representations are $B$-Anosov for a Borel subgroup $B$ of ${\rm PSL}_n\mathbb{R}$, and a faithful discrete irreducible representation.
The maps $\xi^{\pm}$ are called the {\it boundary maps} of the Anosov representation $\rho$.
Since $G/B \cong {\rm Flag}(\mathbb{R}^n)$ when $B$ is Borel, the boundary maps  are maps from $\partial \pi_1(S)$ to the flag manifold ${\rm Flag}(\mathbb{R}^n)$ and coincide with the flag curve induced by the hyperconvexity of Hitchin representation.

\begin{remark}
In the definition of Anosov representations, we follow Gu\'eritaud-Guichard-Kassel-Wienhard \cite{GGKW17}.
The original definition is given by Labourie \cite{La06} and Guichard-Wienhard \cite{GW12}.
Kapovich-Leeb-Porti \cite{KLP17} gives another definition in the viewpoint of the geometry of symmetric spaces. 
\end{remark}

Here we recall the property of  eigenvalues of Hitchin representation shown by Anosov property. 
\begin{proposition}[Labourie \cite{La06}, Bonahon-Dreyer \cite{BD14}]
Let $\rho : \pi_1(S) \rightarrow {\rm PSL}_n\mathbb{R}$ be a Hitchin representation and $\gamma \in \pi_1(S)$ a nontrivial element of $\pi_1(S)$. Then $\rho(\gamma)$ has a lift $ \widetilde{\rho(\gamma)}  \in {\rm SL}_n(\mathbb{R})$ whose eigenvalues are distinct and positive.

\end{proposition}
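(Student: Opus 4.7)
The plan is to combine the hyperconvex/Anosov structure of Hitchin representations (Theorem 3.7 and Definition 3.8) with a connectedness argument on $H_n(S)$. Because $S$ is closed hyperbolic, $\pi_1(S)$ is torsion-free, so any nontrivial $\gamma$ has infinite order with attracting and repelling endpoints $\gamma^{\pm}\in\partial\pi_1(S)$. By Theorem 3.7 the hyperconvex curve of $\rho$ extends to the osculating flag curve $\xi : \partial\pi_1(S)\to{\rm Flag}(\mathbb{R}^n)$. Since the Anosov boundary maps are dynamics-preserving and transverse, the complete flags $F^{\pm}:=\xi(\gamma^{\pm})$ are fixed by $\rho(\gamma)$ and mutually opposite, so setting $L_i:=\xi^i(\gamma^+)\cap\xi^{n-i+1}(\gamma^-)$ yields a $\rho(\gamma)$-invariant line decomposition $\mathbb{R}^n=L_1\oplus\cdots\oplus L_n$ and hence $n$ real eigenvalues.

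For distinctness of the eigenvalues, I would apply the simple-root estimate of Definition 3.8 to every power $\gamma^k$. Combined with the standard identity $\lim_k \mu(\rho(\gamma^k))/k=\lambda(\rho(\gamma))$ relating the Cartan projection to the Jordan projection (the vector of log eigenvalue moduli), this forces $\alpha_i(\lambda(\rho(\gamma)))>0$ for every simple root $\alpha_i$; i.e.\ the moduli $|\lambda_1|,\dots,|\lambda_n|$ are pairwise distinct. For positivity I would use connectedness. On the Fuchsian locus, if $\rho=\iota_n\circ\rho_0$ and $A\in{\rm SL}_2\mathbb{R}$ is the positive-trace lift of $\rho_0(\gamma)$ with eigenvalues $\lambda,\lambda^{-1}>0$, then $\iota_n(A)\in{\rm SL}_n\mathbb{R}$ is a lift of $\rho(\gamma)$ with eigenvalues $\lambda^{n-1},\lambda^{n-3},\dots,\lambda^{-(n-1)}$, all positive and distinct. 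Define
\[ U_\gamma:=\{[\rho]\in H_n(S)\mid \rho(\gamma)\text{ admits a lift in }{\rm SL}_n\mathbb{R}\text{ with distinct positive eigenvalues}\}. \]
Then $F_n(S)\subset U_\gamma$, and by the previous paragraph applied continuously the eigenvalues of $\rho(\gamma)$ remain real, nonzero and pairwise distinct for every $[\rho]\in H_n(S)$. Hence their signs are locally constant, so $U_\gamma$ is both open and closed; connectedness of $H_n(S)$ (Theorem 3.3) then gives $U_\gamma=H_n(S)$.

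The main obstacle will be the distinctness step: the Anosov inequality of Definition 3.8 a priori controls singular values of $\rho(\gamma^k)$, and one must invoke the Cartan--Jordan limit above to transfer the strict root positivity to the eigenvalue moduli of $\rho(\gamma)$. Once this is in place, the Fuchsian computation and the openness/closedness of $U_\gamma$ that close out the argument are routine.
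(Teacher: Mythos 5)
The paper does not prove this proposition; it is stated as a quoted result with attribution to Labourie \cite{La06} and Bonahon--Dreyer \cite{BD14}, so there is no in-paper argument to compare against. Your proof is correct and is essentially the standard argument from those references: the transverse fixed flags give a real eigenline decomposition, the Anosov gap estimate applied to powers of $\gamma$ together with the Cartan--Jordan limit (and the positivity of the stable translation length of an infinite-order element of a hyperbolic group) gives distinctness of the moduli, and positivity of the eigenvalues of a lift follows by the open-closed argument from the explicit computation on the Fuchsian locus and connectedness of $H_n(S)$.
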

In the setting of this proposition, we denote the eigenvalues of a lift $\widetilde{\rho(\gamma)}$ by $\lambda^{\rho}_1(\gamma) > \lambda^{\rho}_2(\gamma) >  \cdots > \lambda^{\rho}_n(\gamma) >0$. 
We define the {\it $k$-th length function} of a Hitchin representation $\rho$ by
\[ l_k^{\rho}(\gamma) = \log \dfrac{\lambda_k^{\rho}(\gamma)}{\lambda_{k+1}^{\rho}(\gamma)} \]
where $k=1, \cdots, n-1$. 
This is well-defined on the Hitchin component $H_n(S)$ since the conjugation preserves eigenvalues.
The length function of Hitchin representations is an extension of a hyperbolic length function of simple closed curves of surfaces.
This is used in the closed leaf condition in the next section.

\section{The Bonahon-Dreyer parameterization}

\subsection{Projective invariants}
We define  projective invariants of tuples of flags.
A (complete) {\it flag} in $\mathbb{R}^n$ is a sequence of nested vector subspaces of $\mathbb{R}^n$
\[ F = ( \{0\} = F^0 \subset F^1 \subset F^2 \subset \cdots \subset F^n = \mathbb{R}^n )\]
where ${\rm dim}F^d = d$. 
The {\it flag manifold} of $\mathbb{R}^n$ is a set of flags in $\mathbb{R}^n$.
We denoted the flag manifold by ${\rm Flag}(\mathbb{R}^n)$.
Note that ${\rm Flag}(\mathbb{R}^n)$ is homeomorphic to a homogeneous space  ${\rm PSL}_n\mathbb{R} / B$, where $B$ is a Borel subgroup of ${\rm PSL}_n\mathbb{R}$, and ${\rm PSL}_n\mathbb{R}$ naturally acts on the flag manifold.
A {\it generic} tuple of flags is a tuple $(F_1, F_2, \cdots, F_k)$ of a finite number of flags $F_1, F_2, \cdots, F_k \in {\rm Flag}(\mathbb{R}^n)$ such that if $n_1, \cdots, n_k$ are nonnegative integers satisfying $n_1 + \cdots + n_k = n$, then $F_1^{1} \cap \cdots \cap F_k^{n_k} = \{ 0 \}$. 

Let $(E, F, G)$ be a generic triple of flags, and $p,q,r \geq 1$ integers with $p+q+r = n$. Choose a basis $e^d, f^d, g^d$ of the wedge product spaces $\bigwedge^dE^d, \bigwedge^dF^d, \bigwedge^dG^d$, which are one dimensional subspaces, for each $d=1, \cdots , n$ respectively. 
We fix an identification between $\bigwedge^n \mathbb{R}^n$ with $\mathbb{R}$.
Then we can regard $e^{d_1} \wedge f^{d_2} \wedge g^{d_3} $ as an element of $\mathbb{R}$ since $d_1 + d_2 + d_3 = n$.
In particular $e^{d_1} \wedge f^{d_2} \wedge g^{d_3} $ is not equal to $0$ since $(E,F,G)$ is generic.
\begin{definition}
The $(p,q, r)$-th triple ratio $T_{pqr}(E,F,G)$ for a generic triple of flags $(E, F, G)$ is defined by
\[ T_{pqr}(E,F,G) = \dfrac{ e^{p+1} \wedge f^{q} \wedge g^{r-1} \cdot  e^{p} \wedge f^{q-1} \wedge g^{r+1} \cdot  e^{p-1} \wedge f^{q+1} \wedge g^{r} }{  e^{p-1} \wedge f^{q} \wedge g^{r+1} \cdot  e^{p} \wedge f^{q+1} \wedge g^{r-1} \cdot  e^{p+1} \wedge f^{q-1} \wedge g^{r} }. \]
\end{definition}
The value of $T_{pqr}(E,F,G)$ is independent of the fixed identification $\bigwedge^n \mathbb{R}^n \cong \mathbb{R}$ and the choice of elements $e^d, f^d, g^d$.
If the one of exponent of $e^d, f^d, g^d$ is equal to $0$, then we ignore the corresponding terms.
For example, $e^0 \wedge f^q \wedge g^{n-q} =  f^{q} \wedge g^{n-q}$. 
The action of ${\rm PSL}_n\mathbb{R}$ leaves the triple ratio invariant by definition.

For the permutation of $(E,F,G)$, the triple ratio behaves as below.
\begin{proposition}
For a generic tuple of flags $(E,F,G)$, 
\[ T_{pqr}(E,F,G) = T_{qrp}(F,G,E) = T_{qpr}(F,E,G)^{-1}. \]

\end{proposition}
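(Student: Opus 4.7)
The plan is to reduce both symmetries to the skew-commutativity rule $u\wedge v = (-1)^{|u||v|} v\wedge u$ applied to the six top-degree tri-wedges appearing in the quotient defining $T_{pqr}$, and to verify that the signs produced cancel between numerator and denominator. Every wedge appearing has total degree $n$ and so lies in the one-dimensional space $\bigwedge^n\mathbb{R}^n \cong \mathbb{R}$; only scalar signs can arise from reordering the three factors within each tri-wedge.

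For the cyclic identity $T_{pqr}(E,F,G) = T_{qrp}(F,G,E)$, I would match each factor of $T_{qrp}(F,G,E)$, which has the form $f^{q'}\wedge g^{r'}\wedge e^{p'}$, with the corresponding factor $e^{p'}\wedge f^{q'}\wedge g^{r'}$ of $T_{pqr}(E,F,G)$; moving $e^{p'}$ past the block of remaining factors costs a sign $(-1)^{p'(n-p')}$. The $e$-degrees $p'$ occurring in the numerator of $T_{pqr}$ form the multiset $\{p-1,p,p+1\}$, and the same multiset appears in the denominator, so the totals $\sum p'(n-p')$ agree modulo $2$ and the signs cancel in the ratio.

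For the inversion identity $T_{pqr}(E,F,G) = T_{qpr}(F,E,G)^{-1}$, the same listing of tri-wedges shows that the three $(e,f,g)$-degree distributions appearing in the numerator of $T_{qpr}(F,E,G)$ coincide, as a multiset, with the distributions appearing in the denominator of $T_{pqr}(E,F,G)$, and symmetrically for denominators against numerators. After rewriting each $f^{q'}\wedge e^{p'}$ as $(-1)^{p'q'}\, e^{p'}\wedge f^{q'}$ so that one can read off the corresponding factor of $T_{pqr}$, one checks that the sums of the exponents $p'q'$ over the three matched numerator factors and over the three matched denominator factors have the same parity. Hence these signs cancel, and the quotient defining $T_{qpr}(F,E,G)$ is precisely that of $T_{pqr}(E,F,G)$ with numerator and denominator exchanged, yielding the claimed inverse.

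The only obstacle is combinatorial bookkeeping; no deeper input is needed. Genericity of $(E,F,G)$ guarantees that every tri-wedge is nonzero, so the divisions make sense, and boundary cases such as $p' = 0$ cause no difficulty under the convention that $e^0 = 1 \in \bigwedge^0\mathbb{R}^n \cong \mathbb{R}$ identifies the omitted term with the scalar one.
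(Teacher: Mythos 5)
Your argument is correct. The paper states this proposition without proof (it is quoted as a standard property of the triple ratio, going back to Bonahon--Dreyer), so there is no in-paper argument to compare against; what you give is exactly the direct verification one would supply. Your two key observations both check out: for the cyclic identity, the $e$-degrees occurring in the numerator and in the denominator form the same multiset $\{p-1,p,p+1\}$, so the reordering signs $(-1)^{p'(n-p')}$ cancel in the ratio; for the inversion identity, the three degree distributions in the numerator of $T_{qpr}(F,E,G)$ are $(p,q+1,r-1)$, $(p-1,q,r+1)$, $(p+1,q-1,r)$, which are precisely the denominator factors of $T_{pqr}(E,F,G)$, and the transposition signs $(-1)^{p'q'}$ sum to $3pq-1$ on both sides, hence cancel. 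One implicit point worth making explicit: each tri-wedge individually depends on the chosen generators $e^d,f^d,g^d$ and on the identification $\bigwedge^n\mathbb{R}^n\cong\mathbb{R}$, so the factor-by-factor matching is legitimate only because you may compute both triple ratios with the same choices, the ratios being independent of them.
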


Let $(E,F,G,G')$ be a generic quadruple of flags, and $p$ an integer with $1 \leq p \leq n-1$.
We choose nonzero elements $e^{d}, f^{d}, g^{d}$ and $g'^{d}$ respectively in $\bigwedge^{d}E^{d}, \bigwedge^{d}F^{d}, \bigwedge^{d}G^{d}$ and $\bigwedge^{d}G'^{d}$.
\begin{definition}
The $p$-th double ratio $D_p(E,F,G,G')$ is defined by
\[ D_p(E,F,G,G') = - \dfrac{e^p \wedge f^{n-p-1} \wedge g^1 \cdot e^{p-1} \wedge f^{n-p} \wedge g'^1}{e^p \wedge f^{n-p-1} \wedge g'^1 \cdot e^{p-1} \wedge f^{n-p} \wedge g^1}. \]
\end{definition}
This is also valued in the real number, well-defined, and invariant for the action of ${\rm PSL}_n\mathbb{R}$.

\subsection{Construction of invariants}
We define three kinds of invariants of Hitchin representations, {\it triangle invariant}, {\it shearing invariant}, and {\it gluing invariant} for an oriented maximal geodesic lamination with a short arc system associated to a pants decomposition.
The triangle invariant is defined for ideal triangles, induced by the ideal triangulation, using the triple ratio.
The shearing invariant is defined for biinfinite leaves using the double ratio.
The gluing invariant is defined for closed leaves using the short arc system and he double ratio.
 
We take a pants decomposition $\mathcal{P}$ of the reference hyperbolic surface $S$.
Let $ \{ C_1, \cdots, C_{\frac{3|\chi(S)| - b}{2}} \}$ be the family of decomposing curves.
If $P \in \mathcal{P}$ is bounded by $C_i, C_j, C_k$, we write the pants $P$ by $P_{ijk}$.
Consider the oriented maximal geodesic lamination $\mathcal{L} = \{ C_i, B_{ij} \}$ where $B_{ij}$ is a spiraling biinfinite geodesic connecting decomposing curves $C_i,C_j$.
In the notation above, we do not care the ordering of the indices.
For example, $B_{ij} = B_{ji}$.
The signature of each spiraling is arbitrary.
In addition to we fix a short arc system $K$ of $\mathcal{L}$.
Note that $\mathcal{L}$ induces an ideal triangulation of $S$. 
We denote the set of ideal triangles of this triangulation by $\mathcal{T} = \{T_{ijk}^0, T_{ijk}^1\}$. where $T_{ijk}^0,T_{ijk}^1$ are ideal triangles contained in a pair of pants $P_{ijk}$.
Let $\rho : \pi_1(S) \rightarrow {\rm PSL}_n\mathbb{R}$ be a Hitchin representation and $\xi_{\rho} : \partial \pi_1(S) \rightarrow \mathbb{H}^2$ the associated flag curve.

Fix a lift $\tilde{T}$ of $T \in \mathcal{T}$ and choose an ideal vertex $v_0$ of $\tilde{T}$ arbitrarily. 
We call the other ideal vertices $v_1, v_2$ so that $v_0, v_1, v_2$ are in clockwise order.
Let $p,q,r$ be integers such that $p, q, r \geq 1$ and $p+q+r=n$.

\begin{definition}
The $(p,q,r)$-th triangle invariant $\tau_{pqr}((T,v_0), \rho)$ of a Hitchin representation $\rho$ and an ideal triangle $T$ and a chosen vertex $v_0$ is defined by
\[ \tau_{pqr}((T, v_0), \rho) = \log T_{pqr}(\xi_{\rho}(v_0), \xi_{\rho}(v_1), \xi_{\rho}(v_2)). \]
\end{definition}

The triangle invariant is independent of a choice of the lift $\tilde{T}$ since flag curves are $\rho$-equivariant and the triple ratio is invariant for the ${\rm PSL}_n\mathbb{R}$-action.

A biinfinite leaf $B \in \mathcal{L}_K$ is a side of two ideal triangles.
Let $T^l$ (resp. $T^r$) be the ideal triangle which is on the left (resp. right) side with respect to the orientation of $B$. 
We lift $B$ to a geodesic $\tilde{B}$ in $\tilde{S}$, and we also lift $T^l$ and $T^r$ to two ideal triangles $\tilde{T^l}$ and $\tilde{T^r}$ so that they are adjacent along $\tilde{B}$. 
We denote the repelling point and attracting point of $\tilde{B}$ by $y$ and $x$, and denote
the other vertex of $\tilde{T}^l$ and $\tilde{T}^r$ by $z^l$ and $z^r$ respectively. 
Let $p$ be an integer with $1 \leq p \leq n-1$. 
\begin{definition}
The $p$-th shearing invariant of a Hitchin representation $\rho$ along $B$ is defined by 
\[ \sigma_p(B, \rho) = \log D_p(\xi_{\rho}(x), \xi_{\rho}(y), \xi_{\rho}(z^l), \xi_{\rho}(z^r)). \]
\end{definition}
This invariant is also well-defined for a choice of lifts by the same reason with the case of triangle invariants.

Consider a closed leaf $C \in \mathcal{L}_K$ with the short arc $K_C$. 
Let $T^l$(resp.  $T^r$) $\in \mathcal{T}$ be ideal triangles which is spiraling along $C$ from the left (resp. right) of $C$ and contains the endpoints of $K_C$. 
Lift  $C$ and $K_C$ to a geodesics $\tilde{C}$ and an arc $\tilde{K_C}$ so that $\tilde{K}_C$ intersects $\tilde{C}$.
In addition to we take lifts $\tilde{T}^l$ and $\tilde{T}^r$ of $T^l$ and $T^r$ respectively such that they contain the endpoints of $\tilde{K}_C$.
We denote, by $x$ and $y$, the repelling and attracting point of the geodesic $\tilde{C}$ respectively. 
Let us define the vertex $z^l, z^r$ of ideal triangles $\tilde{T}^l, \tilde{T}^r$ as follows. 
In the sides of $\tilde{T}^l$, two sides are asymptotic to $\tilde{C}$.
One of these sides cuts the universal cover $\tilde{S}$ such that an ideal triangle $\tilde{T}^l$ and the geodesic $\tilde{C}$ is contained in the same connected component.
The ideal vertex $z^l$ is the end of such a geodesic side of $\tilde{T}^l$ other from the ends of $x,y$.
We define $v^r$ for $\tilde{T}^r$ similarly.
Let $p$ be an integer with $1 \leq p \leq n-1$.
\begin{definition}
The $p$-th gluing invariant of a Hitchin representation $\rho$ along $C$ is defined by 
\[ \theta_p(C, \rho)= \log D_p(\xi_{\rho}(x), \xi_{\rho}(y), \xi_{\rho}(z^l), \xi_{\rho}(z^r)). \]
\end{definition}
The invariants above are well-defined on Hitchin components {\it i.e.} these invariants are independent of representatives of conjugacy class of Hitchin representations.

\subsection{Closed leaf condition}

There is a nice relation between length functions, triangle invariants and shearing invariants. 
Let $C$ be a closed leaf of the lamination $\mathcal{L}_K$. 
Let us focus on the right side of $C$ with respect to the orientation of $C$. 
Let $B_{1}, \cdots, B_{k}$ be the biinfinite leaves spiraling along $C$ from the right, and $T_{1}, \cdots, T_{k}$ the ideal triangles which spiral along $C$ from the right. 
Suppose that these leaves and triangles spiral to $C$ in the direction (resp. the opposite direction) of the orientation of $C$. 
Define $\overline{\sigma}_{p}(B_i, \rho)$ by $\sigma_p(B_i, \rho)$ if $B_i$ is oriented toward $C$, and by $\sigma_{n-p}(B_i, \rho)$ otherwise. 
Then we define

\begin{align*}
R_p^{\rho}(C) &=\sum_{i=1}^{k} \overline{\sigma}_{p}(B_i, \rho) + \sum_{i=1}^k \sum_{q+r=n-p}\tau_{pqr}((T_i, v_i), \rho), \\
(\mbox{resp.  }  R_p^{\rho}(C) &= -\sum_{i=1}^{k} \overline{\sigma}_{n-p}(B_i, \rho) - \sum_{i=1}^k \sum_{q+r=p}\tau_{(n-p)qr}((T_{i}, v_{i}), \rho) ~,)
\end{align*}

where $v_{i}$ is the ideal vertex of a lift $\tilde{T}_{i}$ of $T_i$ which is an attracting (resp. repelling) point of a lift of $C$. When we focus on the left side of $C$, we can define $L_p^{\rho}(C)$ similarly as follows.

\begin{align*}
 L_p^{\rho}(C) &=- \sum_{i=1}^{k} \overline{\sigma}_{p}(B_i, \rho) - \sum_{i=1}^k \sum_{q+r=n-p}\tau_{pqr}((T_i, v_i), \rho). \\
(\mbox{resp.  }  L_p^{\rho}(C) &= \sum_{i=1}^{k} \overline{\sigma}_{n-p}^{\rho}(B_i, \rho) + \sum_{i=1}^k \sum_{q+r=p}\tau_{(n-p)qr}((T_{i}, v_{i}), \rho) ~.)
\end{align*}

\begin{proposition}[Bonahon-Dreyer \cite{BD14}, Proposition 13]
For any $\rho \in {H}_n(S)$ and any closed leaf $C$, it holds that 
\[ l_p^{\rho}(C) = R_p^{\rho}(C) =  L_p^{\rho}(C). \]

\end{proposition}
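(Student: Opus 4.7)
The plan is to exploit the $\rho(\gamma_C)$-invariance of the flags $\xi_\rho(c^\pm)$ at the endpoints of a lift $\tilde{C}$ of $C$, and to show that the sum defining $R_p^{\rho}(C)$ telescopes, leaving only an expression built from the eigenvalues of $\rho(\gamma_C)$. I will concentrate on the case of positive spiraling on the right of $C$; the other cases are symmetric.

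First, fix a lift $\tilde{C}$ with attracting and repelling fixed points $c^+, c^-$ of $\gamma_C \in \pi_1(S)$. By Proposition 3.10, $\rho(\gamma_C)$ has distinct positive eigenvalues $\lambda_1^\rho(C) > \cdots > \lambda_n^\rho(C)$, and the $\rho$-equivariance and dynamics-preserving property of $\xi_\rho$ force $\xi_\rho(c^+)^p$ (respectively $\xi_\rho(c^-)^p$) to be spanned by eigenvectors corresponding to the top (resp.\ bottom) $p$ eigenvalues. Consequently, generators $f^d$ of $\bigwedge^d \xi_\rho(c^+)^d$ and $e^d$ of $\bigwedge^d \xi_\rho(c^-)^d$ are scaled by $\rho(\gamma_C)$ by factors $\lambda_1^\rho(C)\cdots\lambda_d^\rho(C)$ and $\lambda_{n-d+1}^\rho(C)\cdots\lambda_n^\rho(C)$ respectively.

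Second, I would parameterize the spiraling data. On the right of $\tilde{C}$, the lifts $\tilde{B}_i^{(m)}$ of the biinfinite leaves $B_1,\dots,B_k$ and the lifts $\tilde{T}_i^{(m)}$ of the ideal triangles $T_1,\dots,T_k$ alternate along $\tilde{C}$, with $\gamma_C$ acting by the shift $m \mapsto m+1$. Each $\tilde{B}_i^{(m)}$ shares the endpoint $c^+$ with $\tilde{C}$ (positive spiraling), and its other endpoint $z_i^{(m)}$ satisfies $z_i^{(m+1)} = \gamma_C \cdot z_i^{(m)}$. With this indexing, one full period around $C$ corresponds to summing over $i=1,\dots,k$ at a single level $m$, and the discrepancy between level $m$ and level $m+1$ is an application of $\rho(\gamma_C)$.

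Third, expand $R_p^{\rho}(C)$ in wedge-product form using Definitions 4.1, 4.3, 4.4 and 4.5. Each $\overline{\sigma}_p(B_i,\rho)$ contributes a log of a ratio involving $e^{p}\wedge f^{n-p-1}\wedge \xi_\rho(z_i)^1$ type factors, and each sum $\sum_{q+r=n-p}\tau_{pqr}((T_i,v_i),\rho)$ contributes an alternating product of wedges built from the two non-boundary vertices of $\tilde{T}_i$ together with $e^\bullet$ or $f^\bullet$. The main technical step, and the principal obstacle, is to verify the massive cancellation between adjacent contributions: for each fixed pair $(p,q,r)$ the triple-ratio wedges at an ideal triangle share numerators or denominators with the double-ratio wedges at the two biinfinite leaves bordering that triangle, and the alternating signs in Definition 4.1 are designed precisely so these factors cancel around one full period, except for two boundary terms at consecutive levels $m$ and $m+1$.

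Fourth, identify the surviving boundary terms. After cancellation, only wedges involving $e^{p-1}, e^p, f^{n-p-1}, f^{n-p}$ remain, and the residual contribution at the two endpoints of one period differs precisely by applying $\rho(\gamma_C)$. By the eigenvalue computation in the first step, this $\rho(\gamma_C)$-shift scales $e^p$ and $f^{n-p}$ differently from $e^{p-1}$ and $f^{n-p-1}$, and the net logarithmic factor is exactly
\[
\log \frac{\lambda_p^\rho(C)}{\lambda_{p+1}^\rho(C)} = l_p^\rho(C).
\]
The identity $L_p^\rho(C)=l_p^\rho(C)$ follows from the same argument applied to the left side of $\tilde{C}$, where the opposite orientation and the sign conventions in Definition 4.6 produce the same eigenvalue ratio. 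The cases of negative spiraling and of leaves oriented away from $C$ are handled by the substitution $p \leftrightarrow n-p$ combined with the sign changes built into $\overline{\sigma}_p$ and into the definitions of $R_p^\rho$ and $L_p^\rho$, so the result reduces to the positive case already treated.
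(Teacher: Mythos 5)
A preliminary remark: the paper does not prove this proposition at all --- it is imported verbatim from Bonahon--Dreyer \cite{BD14} (their Proposition 13) --- so your attempt can only be measured against the proof in that reference. Your overall plan coincides with theirs: lift the fan of triangles and leaves spiraling along $C$ to a $\gamma_C$-invariant fan around an endpoint $c^{+}$ of $\tilde{C}$, write every shearing and triangle invariant as a ratio of wedge products of flags, telescope over one fundamental domain of the $\gamma_C$-action, and read off the eigenvalue ratio from the way $\rho(\gamma_C)$ scales the generators $e^d, f^d$ of $\bigwedge^d\xi_\rho(c^{\pm})^d$. The reduction of the remaining orientation and spiraling cases to the positive one via $p\leftrightarrow n-p$ and the sign conventions built into $\overline{\sigma}_p$ is likewise how those cases are handled there.

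The gap is exactly where you locate it: the ``massive cancellation'' in your third step is asserted, not carried out, and it is the entire mathematical content of the statement. Writing $u_i$ for the endpoint of $\tilde{B}_i$ other than $c^{+}$, the double ratio of $B_i$ is built from wedges of the form $\xi_\rho(c^{+})^{a}\wedge\xi_\rho(u_i)^{b}\wedge\xi_\rho(u_{i\pm1})^{1}$ with $a\in\{p-1,p\}$, while the triple ratios of the fan triangle with vertices $c^{+},u_i,u_{i+1}$ contribute wedges $\xi_\rho(c^{+})^{a}\wedge\xi_\rho(u_i)^{b}\wedge\xi_\rho(u_{i+1})^{c}$ with $a+b+c=n$; nothing in your argument explains why the matching of these factors forces the triangle invariants to enter through the particular sum $\sum_{q+r=n-p}\tau_{pqr}((T_i,v_i),\rho)$ rather than some other range of indices, and this matching has to be checked index by index. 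Until it is, you cannot conclude that the surviving boundary terms involve only $e^{p-1},e^{p},f^{n-p-1},f^{n-p}$, nor that the $\rho(\gamma_C)$-shift between consecutive periods contributes $\log\bigl(\lambda_p^{\rho}(C)/\lambda_{p+1}^{\rho}(C)\bigr)$ rather than some other ratio of products of eigenvalues. What you have is an accurate roadmap of the Bonahon--Dreyer proof with its decisive identity left unverified.
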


\subsection{Bonahon-Dreyer parameterization}
We apply the Bonahon-Dreyer parameterization theorem in our case.
For the geodesic lamination $\mathcal{L}_K$, we have $\frac{3|\chi(S)|}{2}$ closed leaves $C_i$, $3|\chi(S)|$ biinfinite leaves $B_{ij}$, and $2|\chi(S)|$ ideal triangles $T_{ijk}^l$. 
Set $N = \frac{3|\chi(S)|}{2}(n-1) +  3|\chi(S)|(n-1) +  2|\chi(S)|\binom{n-1}{2}$.
By proposition 4.2, we have a relation between triangle invariants:
\begin{proposition}
\[  \tau_{pqr}((T, v_0), \rho) = \tau_{qrp}((T, v_1), \rho) = \tau_{rpq}((T, v_2), \rho). \]

\end{proposition}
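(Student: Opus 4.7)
The plan is to reduce the identity to the cyclic symmetry of the triple ratio already recorded in Proposition 4.2. Recall that the triangle invariant at $(T,v_0)$ is defined by first lifting $T$ to $\tilde T$ with ideal vertices $v_0,v_1,v_2$ labelled so that they occur in clockwise order, and then setting
\[
\tau_{pqr}((T,v_0),\rho)=\log T_{pqr}\bigl(\xi_\rho(v_0),\xi_\rho(v_1),\xi_\rho(v_2)\bigr).
\]
So the whole content of the proposition is to understand what happens to this expression when the base vertex is shifted from $v_0$ to $v_1$ or $v_2$.

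First I would observe that the clockwise ordering used to define the invariant is itself cyclic: if we choose $v_1$ as the distinguished vertex, then reading the remaining vertices clockwise produces $v_2,v_0$, and similarly choosing $v_2$ gives $v_0,v_1$. This is the only geometric input needed. With this convention unpacked,
\[
\tau_{qrp}((T,v_1),\rho)=\log T_{qrp}\bigl(\xi_\rho(v_1),\xi_\rho(v_2),\xi_\rho(v_0)\bigr),
\]
\[
\tau_{rpq}((T,v_2),\rho)=\log T_{rpq}\bigl(\xi_\rho(v_2),\xi_\rho(v_0),\xi_\rho(v_1)\bigr).
\]
Note also that the choice of lift $\tilde T$ is irrelevant, because a different lift differs by an element of $\pi_1(S)$, which $\xi_\rho$ intertwines with the ${\rm PSL}_n\mathbb{R}$-action on flags, and $T_{pqr}$ is invariant under that action.

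Now I would invoke Proposition 4.2, which gives $T_{pqr}(E,F,G)=T_{qrp}(F,G,E)$ for any generic triple of flags. Applying this with $(E,F,G)=(\xi_\rho(v_0),\xi_\rho(v_1),\xi_\rho(v_2))$ once shows $\tau_{pqr}((T,v_0),\rho)=\tau_{qrp}((T,v_1),\rho)$, and applying it a second time (or, equivalently, to the shifted triple) yields the equality with $\tau_{rpq}((T,v_2),\rho)$. Taking logarithms is legitimate because by Theorem 3.7 the flag curve sends distinct points of $\partial\pi_1(S)$ to a generic triple of flags, so the triple ratio is a positive real number (this positivity is implicit in the fact that $\tau_{pqr}$ is defined as a logarithm in Definition 4.3, but it is worth a line of justification).

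There is no genuine obstacle: the only point to be careful about is the bookkeeping of the clockwise ordering, and confirming that when one cyclically rotates the distinguished vertex, the three indices $(p,q,r)$ rotate in the same direction as the vertices. Once that matching is correctly set up, the proposition is immediate from Proposition 4.2.
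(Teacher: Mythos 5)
Your proof is correct and is essentially identical to the paper's, which likewise derives the identity directly from the cyclic symmetry $T_{pqr}(E,F,G)=T_{qrp}(F,G,E)$ of Proposition 4.2 after observing that the clockwise labelling of vertices rotates cyclically with the choice of base vertex. One small caveat: genericity of the flag triple only guarantees that the triple ratio is nonzero, not positive (positivity is a separate property of Hitchin flag curves), but this does not affect the equality you are proving.
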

Thus it is enough to consider only the triangle invariant defined for one ideal vertex and we denote the triangle invariant as $\tau_{pqr}(T, \rho)$ simply.
 Bonahon-Dreyer showed that Hitchin representations are  parameterized by the all triangle invariants, shearing invariants, and gluing invariants we can consider.
\begin{theorem}[Bonahon-Dreyer \cite{BD14}, \cite{BD17}]
The map
\begin{align*}
&\Phi_{\mathcal{L}_K} : H_n(S) \rightarrow \mathbb{R}^N \\
&\Phi_{\mathcal{L}_K}(\rho) =  (\tau_{pqr}(T_{ijk}^l, \rho), \cdots, \sigma(B_{ij},\rho), \cdots, \theta(C_i, \rho), \cdots).
\end{align*}
is a homeomorphism onto the image.
Moreover the image of this map is the interior $\mathcal{P}_{\mathcal{L}_K}$ of a convex polytope.

\end{theorem}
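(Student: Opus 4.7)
The plan is to construct an inverse to $\Phi_{\mathcal{L}_K}$ by reconstructing the flag curve $\xi_\rho$ directly from the tuple of triangle, shearing, and gluing invariants, then invoke Theorem 3.6 to recover the Hitchin representation. The polytope structure will emerge as the combination of linear equations forced by the holonomy being well-defined and linear inequalities forced by hyperconvexity.

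First I would establish continuity and injectivity of $\Phi_{\mathcal{L}_K}$. Continuity is immediate: triple and double ratios are rational functions on generic configurations of flags, and the osculating flag curve $\xi_\rho$ depends continuously on $\rho \in H_n(S)$ by Anosov stability. Injectivity reduces to showing that two Hitchin representations with identical invariants yield flag curves agreeing on the set of endpoints of lifts of leaves of $\mathcal{L}_K$, which is dense in $\partial \pi_1(S)$. Indeed, a generic triple of flags is determined up to $\mathrm{PSL}_n\mathbb{R}$ by its triple ratios, and given three flags the fourth is determined by the double ratios; propagating this across the lifted triangulation shows the two flag curves agree on a dense subset, hence everywhere, whence the two representations are conjugate.

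For surjectivity, I would fix a base ideal triangle $\tilde{T}_0$ in $\tilde{S}$ and a standard triple of flags $(F_0, F_1, F_2)$ realizing the prescribed triangle invariants at $\tilde{T}_0$. Then I would propagate flags across all edges of the lifted triangulation $\tilde{\mathcal{L}}_K$: along each biinfinite leaf, the $p$-th shearing invariant together with the two flanking flags uniquely determines the new flag via the definition of the $p$-th double ratio for $p = 1, \dots, n-1$; across a closed leaf, the short arc system $K$ selects the preferred pair of adjacent spiraling triangles and the gluing invariants specify the corresponding double ratios. This defines a flag assignment on the set of ideal vertices of $\tilde{\mathcal{L}}_K$, and the resulting assignment is automatically equivariant with respect to a unique representation $\rho \colon \pi_1(S) \to \mathrm{PSL}_n\mathbb{R}$ provided the holonomy around each decomposing curve is consistent.

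Well-definedness of the holonomy around a closed leaf $C$ is exactly the content of Proposition 4.8: it forces the linear identity $R_p^\rho(C) = L_p^\rho(C)$ on the invariants, and the requirement that the resulting holonomy be purely loxodromic with the correct eigenvalue ordering forces the open inequalities $l_p^\rho(C) = R_p^\rho(C) > 0$. Together these cut out the interior of the convex polytope $\mathcal{P}_{\mathcal{L}_K}$. The main obstacle, and the technical heart of the argument, is to promote the vertex-level flag assignment to a continuous, hyperconvex curve $\xi \colon \partial \pi_1(S) \to \mathrm{Flag}(\mathbb{R}^n)$: one must show that as ideal vertices of $\tilde{\mathcal{L}}_K$ approximate a boundary point $x \in \partial \pi_1(S)$, the associated flags converge, and that for every tuple of pairwise distinct boundary points the limiting flags remain in generic position. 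The positivity encoded in $\mathcal{P}_{\mathcal{L}_K}$ together with contraction estimates for the transition matrices between adjacent triangles (derived from the Anosov property of the candidate representation on any finitely generated subgroup) should provide both convergence and genericity. Once hyperconvexity is established, Theorem 3.6 produces a Hitchin representation whose Bonahon-Dreyer invariants coincide with the input tuple by construction, yielding the homeomorphism onto the interior of the polytope.
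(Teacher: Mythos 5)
This statement is quoted by the paper from Bonahon--Dreyer \cite{BD14}, \cite{BD17} and is not proved there, so there is no in-paper argument to compare against; I can only measure your sketch against the actual Bonahon--Dreyer proof. Your treatment of injectivity is essentially theirs: the invariants determine the flag curve on the (dense) set of ideal vertices of $\tilde{\mathcal{L}}_K$, because triple ratios determine a generic flag triple up to the group action and double ratios then pin down each successive flag; continuity of $\xi_\rho$ finishes it. Your identification of the polytope as ``closed leaf equalities plus the positivity $l_p^\rho(C)>0$'' also matches Proposition 4.8 and its role in their argument.

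The genuine gap is in the surjectivity step, precisely at the point you flag as the technical heart. You propose to prove convergence and genericity of the reconstructed vertex-level flag assignment using ``contraction estimates for the transition matrices \dots derived from the Anosov property of the candidate representation.'' This is circular: at that stage you have only a flag assignment on ideal vertices and a putative holonomy, and the Anosov property is exactly what you are trying to establish (via hyperconvexity and Theorem 3.6). You cannot invoke it to produce the contraction that yields the continuous limit curve. The non-circular input in the Bonahon--Dreyer proof is \emph{total positivity} in the sense of Fock--Goncharov and Lusztig: the conditions cutting out $\mathcal{P}_{\mathcal{L}_K}$ guarantee that the elementary matrices relating flags of adjacent triangles are totally positive, and it is the contraction properties of products of totally positive matrices on the flag variety (together with the closed leaf inequalities controlling the spiraling around closed leaves) that give both convergence of the flags at arbitrary boundary points and genericity of the limits. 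Without replacing your appeal to the Anosov property by this positivity argument (or an equivalent a priori estimate), the reconstruction of a hyperconvex curve from an arbitrary point of the polytope interior does not go through, and surjectivity is not established. A secondary, fixable imprecision: the double ratios $D_p$ only determine the line $G'^1$ of the new flag, so the propagation step must also use the triangle invariants of the newly created triangle to recover the full flag $G'$.
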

The parameter space is coincides with the interior of the convex polytope which is defined by  the closed leaf condition.
We denote the coordinate of the target space $\mathbb{R}^N$ by $(\tau_{pqr}(T_{ijk}^l), \cdots, \sigma(B_{ij}), \cdots, \theta(C_i), \cdots)$.

\section{Invariants of Fuchsian representations}

Let $\rho = \iota_n \circ \rho$ be a Fuchsian representation defined by a hyperbolic holonomy $\rho : \pi_1(S) \rightarrow {\rm PSL}_2\mathbb{R}$.
We denote, by $\partial \pi_1(S)^{(3)}$ (resp. $\partial \mathbb{H}^{(3)}$), the set of triples of pairwise distinct points of $\partial \pi_1(S)$ (resp. $\partial \mathbb{H}^2$.
\begin{proposition}
For any triples $(x,y,z) \in \partial \pi_1(S) ^{(3)}$ in clockwise order, the $(p,q,r)$-triple ratio $T_{pqr}(\xi_{\rho_n}(x),\xi_{\rho_n}(y),\xi_{\rho_n}(z)) = 1$.
\end{proposition}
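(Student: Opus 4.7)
The plan is to reduce the claim to an explicit calculation on one normal-form triple, using the ${\rm PSL}_2\mathbb{R}$-equivariance of the Veronese flag curve.

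First I would observe that the flag curve of a Fuchsian representation $\rho_n = \iota_n \circ \rho$ is the composition $\xi_{\rho_n} = \nu \circ f_\rho$ of the Veronese flag curve with the developing map of the underlying hyperbolic holonomy, as described at the end of Subsection 3.2. Because $f_\rho$ extends to an orientation-preserving homeomorphism on ideal boundaries, a clockwise triple $(x,y,z) \in \partial \pi_1(S)^{(3)}$ is sent to a clockwise triple $(a,b,c) := (f_\rho(x), f_\rho(y), f_\rho(z)) \in \partial \mathbb{H}^{(3)}$. Consequently, the statement reduces to the assertion that $T_{pqr}(\nu(a), \nu(b), \nu(c)) = 1$ for every clockwise triple $(a,b,c)$ of points in $\partial \mathbb{H}^2$.

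Next I would exploit the ${\rm PSL}_n\mathbb{R}$-invariance of the triple ratio. Because $\iota_n(g) \cdot \nu(w) = \nu(g \cdot w)$ for every $g \in {\rm PSL}_2\mathbb{R}$ and $w \in \partial \mathbb{H}^2 = \mathbb{RP}^1$, this invariance yields $T_{pqr}(\nu(a), \nu(b), \nu(c)) = T_{pqr}(\nu(ga), \nu(gb), \nu(gc))$ for every $g \in {\rm PSL}_2\mathbb{R}$. Since ${\rm PSL}_2\mathbb{R}$ acts transitively on clockwise triples of $\mathbb{RP}^1$, it is enough to verify the identity on a single clockwise triple, which I will take to be $(\infty, 0, 1)$ after a suitable choice of coordinates on $\partial \mathbb{H}^2$.

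For the last step I would carry out the explicit computation. Using the identification $\mathbb{R}^n = {\rm Poly}_n(X,Y)$, the three flags have explicit bases: $\nu^d(\infty)$ is spanned by $X^{n-1}, X^{n-2}Y, \dots, X^{n-d}Y^{d-1}$; $\nu^d(0)$ is spanned by $Y^{n-1}, XY^{n-2}, \dots, X^{d-1}Y^{n-d}$; and $\nu^d(1) = (X+Y)^{n-d} \cdot {\rm Poly}_d(X, Y)$. For each of the six wedge products $e^{\alpha} \wedge f^{\beta} \wedge g^{\gamma}$ with $\alpha + \beta + \gamma = n$ appearing in $T_{pqr}$, I would expand the $(X+Y)^{n-\gamma}$ factors binomially. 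The $e$-part and $f$-part already consume monomials $X^{n-1}, \dots, X^{n-\alpha}Y^{\alpha-1}$ and $Y^{n-1}, \dots, X^{\beta-1}Y^{n-\beta}$, so only a single monomial from the binomial expansion of $(X+Y)^{n-\gamma}$ survives the wedge. The six wedges thereby reduce to explicit binomial-coefficient expressions, and the numerator and denominator of $T_{pqr}$ match term by term.

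The main obstacle is precisely this last piece of bookkeeping: tracking signs across the six wedge products and verifying that the products of surviving binomial coefficients coincide in the numerator and denominator. The cleanest version identifies the surviving monomial in $e^{\alpha} \wedge f^{\beta} \wedge g^{\gamma}$ as a single $X^iY^{n-1-i}$ with $i$ forced by the degrees, reducing the equality $T_{pqr} = 1$ to a short identity of products of binomial coefficients. Alternatively, invariance under cyclic permutation of $\{0, 1, \infty\}$ by an orientation-preserving element of ${\rm PSL}_2\mathbb{R}$, combined with Proposition 4.2 and a base case check at $n = 3$, may be used to shortcut the general identity.
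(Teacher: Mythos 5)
Your reduction to a normalized triple is exactly the paper's: write $\xi_{\rho_n}=\nu\circ f_{\rho}$, use the equivariance $\iota_n(A)\nu(w)=\nu(Aw)$ together with the ${\rm PSL}_n\mathbb{R}$-invariance of the triple ratio and the transitivity of ${\rm PSL}_2\mathbb{R}$ on oriented triples of $\partial\mathbb{H}^2$, and then compute with the explicit flags in ${\rm Poly}_n(X,Y)$. Up to that point the proposal matches the paper (the paper normalizes to $(\infty,1,0)$ rather than $(\infty,0,1)$; the orientation discrepancy is harmless here only because the answer is $1$ and permutations merely invert the triple ratio).

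The gap is in the final computation. It is not true that only a single monomial from the expansion of $(X+Y)^{n-\gamma}$ survives the wedge. Writing $g^{\gamma}=g_1\wedge\cdots\wedge g_{\gamma}$ with $g_j=(X+Y)^{n-\gamma}X^{\gamma-j}Y^{j-1}=\sum_{i=0}^{n-\gamma}\binom{n-\gamma}{i}b_{i+j}$ in the monomial basis $b_k=X^{n-k}Y^{k-1}$, and wedging against $e^{\alpha}=b_1\wedge\cdots\wedge b_{\alpha}$ and $f^{\beta}=b_{n-\beta+1}\wedge\cdots\wedge b_n$, what survives from each $g_j$ is its entire projection to the $\gamma$-dimensional complement $\langle b_{\alpha+1},\dots,b_{\alpha+\gamma}\rangle$, and in general all $\gamma$ of these coordinates are nonzero. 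Hence $e^{\alpha}\wedge f^{\beta}\wedge g^{\gamma}$ is a $\gamma\times\gamma$ determinant of binomial coefficients, not a product of single surviving coefficients; your picture is correct only when $\gamma=1$. Since the exponents of the $\nu(1)$-flag occurring in $T_{pqr}$ are $q-1,q,q+1$ and $q+1\geq 2$ always, these nontrivial determinants are unavoidable, and evaluating them in closed form is the bulk of the paper's proof (Lemmas 5.3 and 5.4, which reduce the binomial determinant to a Vandermonde determinant by repeated applications of Pascal's rule to rows and columns). This is precisely the step your proposal elides. Your proposed shortcut via the symmetries of Proposition 4.2 does not close the gap either: a cyclic Möbius permutation of $\{0,1,\infty\}$ only yields $T_{pqr}=T_{qrp}=T_{rpq}$ at the normalized triple; to get an inversion you need an orientation-reversing transposition, the resulting relation $T_{pqr}=T_{pqr}^{-1}$ applies only when $(p,q,r)$ has a repeated entry (so the argument already fails for $(1,2,3)$ when $n=6$), and even then you would need positivity of the triple ratio, which is not established at this point.
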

\begin{proof}
Since ${\rm PSL}_2\mathbb{R}$ transitively acts on the set of triples $\partial \mathbb{H}^{(3)}$, we can take a transformation $A \in {\rm PSL}_2\mathbb{R}$ such that $A(f_{\rho}(x))=\infty, A(f_{\rho}(y)) = 1, A(f_{\rho}(z)) = 0$.
Using this normalization, we have
\begin{align*}
T_{pqr}(\xi_{\rho_n}(x),\xi_{\rho_n}(y),\xi_{\rho_n}(z)) 
&= T_{pqr}(\nu(f_{\rho}(x)), \nu(f_{\rho}(y)), \nu(f_{\rho}(z))) \\
&= T_{pqr}(\nu(A^{-1}(\infty)), \nu(A^{-1}(1)), \nu(A^{-1}(0)) \\
&= T_{pqr}(\iota_n(A)^{-1}\nu(\infty), \iota_n(A)^{-1}\nu(1), \iota_n(A)^{-1}\nu(0)) \\
&= T_{pqr}(\nu(\infty), \nu(1), \nu(0)).
\end{align*}
Thus it is enough to consider the value $T_{pqr}(\nu(\infty), \nu(1), \nu(0))$.

Recall that the flag $\nu([a:b]) = \{ V_d \}_d$ for $[a:b] \in \mathbb{RP}^1$ consists of the nested vector space $V_d$ of dimension $d = 0,1, \cdots, n$ defined by 
\[ V_d = \{ P(X,Y) \in {\rm Poly}_n(X,Y) ~|~ \exists Q(X,Y) ~s.t.~ P(X,Y) = (aX + bY)^{n-d} Q(X,Y) \}. \] 
For example, the $r$-dimensional vector space $\nu(0)^r$ is  
\begin{align*}
\nu(0)^d 
&= \{ P(X,Y) ~|~ \exists Q(X,Y) ~s.t.~ P(X,Y) = Y^{n-d} Q(X,Y) \} \\
&= \{ (k_1 X^{d-1} + k_2 X^{d-2} Y + \cdots + k_{d} Y^{d-1})Y^{n-d} ~|~ k_1, \cdots k_d \in \mathbb{R}   \}  \\
&= {\rm Span}<X^{d-1}Y^{n-d}, X^{d-2}Y^{n-d+1}, \cdots, Y^{n-1} > .
\end{align*}
Similarly,
\begin{align*}
\nu(\infty)^d 
&= {\rm Span}<X^{n-1}, X^{n-2}Y, \cdots, X^{n-d}Y^{d-1}>, \\
\nu(1)^d
&= {\rm Span}<(X+Y)^{n-d}X^{d-1}, (X+Y)^{n-d}X^{d-2}Y, \cdots, (X+Y)^{n-d}Y^{d-1} >.
\end{align*}
To compute the triple ratio, first we choose a basis of $\bigwedge^d \nu(0)^d, \bigwedge^d \nu(1)^d, \bigwedge^d \nu(\infty)^d$ as follows:
\begin{align*}
t_0^d &=  X^{d-1}Y^{n-d} \wedge X^{d-2}Y^{n-d+1} \wedge \cdots \wedge Y^{n-1} \in \bigwedge^d \nu(0)^d, \\
t_{\infty}^d &= X^{n-1} \wedge X^{n-2}Y \wedge \cdots \wedge X^{n-d}Y^{d-1} \in \bigwedge^d \nu(\infty)^d, \\
t_1^d &= (X+Y)^{n-d}X^{d-1} \wedge (X+Y)^{n-d}X^{d-2}Y \wedge \cdots \wedge (X+Y)^{n-d}Y^{d-1} \in \bigwedge^d \nu(1)^d.
\end{align*}
Then $T_{pqr}(\nu(\infty), \nu(1), \nu(0))$ is precisely equal to
\[ \dfrac
{
t_{\infty}^{p+1} \wedge t_1^{q} \wedge t_0^{r-1} \cdot t_{\infty}^{p} \wedge t_1^{q-1} \wedge t_0^{r+1} \cdot t_{\infty}^{p-1} \wedge t_1^{q+1} \wedge t_0^{r}
}
{
t_{\infty}^{p-1} \wedge t_1^{q} \wedge t_0^{r+1} \cdot t_{\infty}^{p} \wedge t_1^{q+1} \wedge t_0^{r-1} \cdot t_{\infty}^{p+1} \wedge t_1^{q-1} \wedge t_0^{r}
},
\]
so we should verify values of wedge products $t_{\infty}^{p} \wedge t_1^{q} \wedge t_0^{r}$ for integers $p,q,r$ with $0 \leq p,q,r \leq n$ and $p+q+r = n$.
(We abuse notion $p,q,r$ which appeared in the statement of proposition 5.1.)
The following formula is shown by easy linear algebra.
\begin{lemma}
Let $V$ be an $n$-dimensional vector space with a basis $\{b_1, \cdots, b_n \}$ and $\{v_1, \cdots, v_n\}$ be arbitrary vectors in $V$.
If $v_i = \sum_{i=1}^n v_{ij}b_j$, then 
\[v_1 \wedge \cdots \wedge v_n = {\rm Det}((v_{ij}))b_1 \wedge \cdots \wedge b_n. \]

\end{lemma}
We fix a basis of ${\rm Poly}_n(X,Y)$ by $b_1 = X^{n-1}, b_2 = X^{n-2}Y, \cdots, b_n = Y^{n-1}$, and we may choose an identification $\bigwedge^n {\rm Poly}_n(X,Y) \rightarrow \mathbb{R}$ such that $b_1 \wedge b_2 \wedge \cdots \wedge b_n$ is identified with 1. 
Then, using this basis, 
\begin{align*}
t_{\infty}^{p} \wedge t_1^{q} \wedge t_0^{r} 
&= 
X^{n-1} \wedge  X^{n-2}Y \wedge \cdots \wedge X^{n-p}Y^{p-1} \wedge \\
&\qquad
(X+Y)^{n-q}X^{q-1} \wedge  (X+Y)^{n-q}X^{q-2}Y \wedge \cdots \wedge (X+Y)^{n-q}Y^{q-1}  \wedge \\
&\qquad \quad
X^{r-1}Y^{n-r} \wedge X^{r-2}Y^{n-r+1} \wedge \cdots \wedge Y^{n-1} 
\\
&= 
b_1 \wedge b_2 \wedge \cdots b_p \wedge \\
& \qquad 
\sum_{i=1}^{n-q}\binom{n-q}{i} b_{i+1} \wedge \sum_{i=1}^{n-q}\binom{n-q}{i} b_{i+2} \wedge \cdots \wedge \sum_{i=1}^{n-q}\binom{n-q}{i} b_{i+q} \wedge \\
&\qquad \quad 
b_{n-r+1} \wedge b_{n-r+2} \wedge \cdots  \wedge b_n.
\end{align*}
By Lemma 5.2 and an easy computation of determinant of matrix, we get
\[ t_{\infty}^{p} \wedge t_1^{q} \wedge t_0^{r}  =
\begin{vmatrix}
\binom{p+r}{p} & \cdots & \binom{p+r}{p-q+1} \\
\vdots & \vdots & \vdots \\
\binom{p+r}{p+q-1} & \cdots & \binom{p+r}{p} \\
\end{vmatrix} \]
if $q \neq 0$ and $t_{\infty}^{p} \wedge t_1^{0} \wedge t_0^{r} = 1$.
We suppose $q \neq 0$.
Note that we now consider an extended binomial coefficient defined by
\[ \binom{n}{p} = 
\begin{cases}
\dfrac{n!}{p!(n-p)!}  & (0 \leq p \leq n) \\
0 & (otherwise).
\end{cases} \]
Hence many zero entries may appear in this determinant.
\begin{lemma}
The determinant
\[
\begin{vmatrix}
\binom{p+r}{p} & \cdots & \binom{p+r}{p-q+1} \\
\vdots & \vdots & \vdots \\
\binom{p+r}{p+q-1} & \cdots & \binom{p+r}{p} \\
\end{vmatrix}
\]
is equal to
\[
(-1)^\frac{(q-1)q}{2}\dfrac{(n-q)!(n-q+1)! \cdots (n-1)! 1!2! \cdots (q-1)!}{(n-r-q)!(n-r-q+1)! \cdots (n-r-1)! r! (r+1)! \cdots (r+q-1)!}.
\]
\end{lemma}
\begin{proof}[Proof of Lemma 5.3.]
The following formulae still hold for the definition of the extended binomial coefficient.
\begin{align}
\binom{n}{p} &= \binom{n}{n-p}, \\
\binom{n}{p} + \binom{n}{p+1} &= \binom{n+1}{p+1}.
\end{align}
By the elemental transformations of matrices, adding the second row to the first row, the third row to the second row, ..., and the $q$-th row to the $(q-1)$-th row and using the formula (2), we get
\[
\begin{vmatrix}
\binom{p+r}{p} & \cdots & \binom{p+r}{p-q+1} \\
\vdots & \vdots & \vdots \\
\binom{p+r}{p+q-1} & \cdots & \binom{p+r}{p} \\
\end{vmatrix}
 =
\begin{vmatrix}
\binom{p+r+1}{p+1} & \cdots & \binom{p+r+1}{p-q+2} \\
\binom{p+r+1}{p+2} & \cdots & \binom{p+r+1}{p-q+3} \\
\binom{p+r+1}{p+3} & \cdots & \binom{p+r+1}{p-q+4} \\
\vdots & \vdots & \vdots \\
\binom{p+r+1}{p+q-2} & \cdots & \binom{p+r+1}{p-1} \\
\binom{p+r+1}{p+q-1} & \cdots & \binom{p+r+1}{p} \\
\binom{p+r}{p+q-1} & \cdots & \binom{p+r}{p} \\
\end{vmatrix} .\] 
Next, by adding the second row to the first row, the third row to the second row, ..., and the $(q-1)$-th row to the $(q-2)$-th row and using the formula (2),
\[
\begin{vmatrix}
\binom{p+r+1}{p+1} & \cdots & \binom{p+r+1}{p-q+2} \\
\binom{p+r+1}{p+2} & \cdots & \binom{p+r+1}{p-q+3} \\
\binom{p+r+1}{p+3} & \cdots & \binom{p+r+1}{p-q+4} \\
\vdots & \vdots & \vdots \\
\binom{p+r+1}{p+q-2} & \cdots & \binom{p+r+1}{p-1} \\
\binom{p+r+1}{p+q-1} & \cdots & \binom{p+r+1}{p} \\
\binom{p+r}{p+q-1} & \cdots & \binom{p+r}{p} \\
\end{vmatrix}
 =
\begin{vmatrix}
\binom{p+r+2}{p+2} & \cdots & \binom{p+r+2}{p-q+3} \\
\binom{p+r+2}{p+3} & \cdots & \binom{p+r+2}{p-q+4} \\
\binom{p+r+2}{p+4} & \cdots & \binom{p+r+2}{p-q+5} \\
\vdots & \vdots & \vdots \\
\binom{p+r+2}{p+q-1} & \cdots & \binom{p+r+2}{p-1} \\
\binom{p+r+1}{p+q-1} & \cdots & \binom{p+r+1}{p} \\
\binom{p+r}{p+q-1} & \cdots & \binom{p+r}{p} \\
\end{vmatrix} .\] 
Iterating this deformation, we get
\[ 
\begin{vmatrix}
\binom{p+r}{p} & \cdots & \binom{p+r}{p-q+1} \\
\vdots & \vdots & \vdots \\
\binom{p+r}{p+q-1} & \cdots & \binom{p+r}{p} \\
\end{vmatrix}
 =
\begin{vmatrix}
\binom{p+r+q-1}{p+q-1} & \cdots & \binom{p+r+q-1}{p} \\
\binom{p+r+q-2}{p+q-1} & \cdots & \binom{p+r+q-2}{p} \\
\binom{p+r+q-3}{p+q-1} & \cdots & \binom{p+r+q-3}{p} \\
\vdots & \vdots & \vdots \\
\binom{p+r+2}{p+q-1} & \cdots & \binom{p+r+2}{p} \\
\binom{p+r+1}{p+q-1} & \cdots & \binom{p+r+1}{p} \\
\binom{p+r}{p+q-1} & \cdots & \binom{p+r}{p} \\
\end{vmatrix} 
=
\begin{vmatrix}
\binom{n-1}{p+q-1} & \cdots & \binom{n-1}{p} \\
\binom{n-2}{p+q-1} & \cdots & \binom{n-2}{p} \\
\binom{n-3}{p+q-1} & \cdots & \binom{n-3}{p} \\
\vdots & \vdots & \vdots \\
\binom{n-q+2}{p+q-1} & \cdots & \binom{n-q+2}{p} \\
\binom{n-q+1}{p+q-1} & \cdots & \binom{n-q+1}{p} \\
\binom{n-q}{p+q-1} & \cdots & \binom{n-q}{p} \\
\end{vmatrix} .
\] 
Note that $p+q+r =n$ for the last equality. 
We consider a similar deformation for columns.
By adding the second column to the first column, the third column to the second column, ..., and the $q$-th column to the $(q-1)$-th column, and using the formula (2), the determinant above is deformed to
\[ 
\begin{vmatrix}
\binom{n}{p+q-1}&\binom{n}{p+q-2}&\binom{n}{p+q-3} & \cdots &\binom{n}{p+2}&\binom{n}{p+1} & \binom{n-1}{p} \\
\vdots & \vdots & \vdots &~& \vdots & \vdots & \vdots \\
\binom{n-q+1}{p+q-1}&\binom{n-q+1}{p+q-2}&\binom{n-q+1}{p+q-3} & \cdots &\binom{n-q+1}{p+2}&\binom{n-q+1}{p+1} & \binom{n-q}{p} \\
\end{vmatrix} .
\] 
By adding the second column to the first column, the third column to the second column, ..., and the $(q-1)$-th column to the $(q-2)$-th column, and using the formula (2),  the determinant is again deformed to the following form
\[ 
\begin{vmatrix}
\binom{n+1}{p+q-1}&\binom{n+1}{p+q-2}&\binom{n+1}{p+q-3} & \cdots &\binom{n+1}{p+2}&\binom{n}{p+1} & \binom{n-1}{p} \\
\vdots & \vdots & \vdots &~& \vdots & \vdots & \vdots \\
\binom{n-q+2}{p+q-1}&\binom{n-q+2}{p+q-2}&\binom{n-q+2}{p+q-3} & \cdots &\binom{n-q+2}{p+2}&\binom{n-q+1}{p+1} & \binom{n-q}{p} \\
\end{vmatrix} .
\] 
By iterating this deformation, the original determinant can be deformed to the following one:
\[ 
\begin{vmatrix}
\binom{n+q-2}{p+q-1}&\binom{n+q-3}{p+q-2}&\binom{n+q-4}{p+q-3} & \cdots &\binom{n+1}{p+2}&\binom{n}{p+1} & \binom{n-1}{p} \\
\vdots & \vdots & \vdots &~& \vdots & \vdots & \vdots \\
\binom{n-1}{p+q-1}&\binom{n-2}{p+q-2}&\binom{n-3}{p+q-3} & \cdots &\binom{n-q+2}{p+2}&\binom{n-q+1}{p+1} & \binom{n-q}{p} \\
\end{vmatrix} .
\] 
Using $p+q+r=n$, and replacing columns and rows, the determinant above can be deformed as follows.

\begin{align*}
& \hspace{5pt}
\begin{vmatrix}
\binom{n+q-2}{p+q-1}&\binom{n+q-3}{p+q-2}& \cdots &\binom{n}{p+1} & \binom{n-1}{p} \\
\binom{n+q-3}{p+q-1}&\binom{n+q-4}{p+q-2}& \cdots &\binom{n-1}{p+1} & \binom{n-2}{p} \\
\vdots & \vdots & \vdots &\vdots & \vdots \\
\binom{n}{p+q-1}&\binom{n-1}{p+q-2}& \cdots &\binom{n-q+2}{p+1} & \binom{n-q+1}{p} \\
\binom{n-1}{p+q-1}&\binom{n-2}{p+q-2}& \cdots &\binom{n-q+1}{p+1} & \binom{n-q}{p} \\
\end{vmatrix} \\
&=(-1)^{\frac{q(q-1)}{2}}
\begin{vmatrix}
\binom{n-1}{p+q-1}&\binom{n-2}{p+q-2}& \cdots &\binom{n-q+1}{p+1} & \binom{n-q}{p} \\
\binom{n}{p+q-1}&\binom{n-1}{p+q-2}& \cdots &\binom{n-q+2}{p+1} & \binom{n-q+1}{p} \\
\vdots & \vdots & \vdots &\vdots & \vdots \\
\binom{n+q-3}{p+q-1}&\binom{n+q-4}{p+q-2}& \cdots &\binom{n-1}{p+1} & \binom{n-2}{p} \\
\binom{n+q-2}{p+q-1}&\binom{n+q-3}{p+q-2}& \cdots &\binom{n}{p+1} & \binom{n-1}{p} \\
\end{vmatrix} \\
&=(-1)^{\frac{q(q-1)}{2}}\cdot(-1)^{\frac{q(q-1)}{2}}
\begin{vmatrix}
\binom{n-q}{p} & \binom{n-q+1}{p+1} & \cdots & \binom{n-2}{p+q-2} & \binom{n-1}{p+q-1} \\
\binom{n-q+1}{p} & \binom{n-q+2}{p+1} & \cdots & \binom{n-1}{p+q-2} & \binom{n}{p+q-1} \\
\vdots & \vdots & \vdots &\vdots & \vdots \\
\binom{n-2}{p} & \binom{n-1}{p+1} & \cdots & \binom{n+q-4}{p+q-2} & \binom{n+q-3}{p+q-1} \\
\binom{n-1}{p} & \binom{n}{p+1} & \cdots & \binom{n+q-3}{p+q-2} & \binom{n+q-2}{p+q-1} \\ 
\end{vmatrix} \\
&=
\begin{vmatrix}
\binom{n-q}{n-r-q} & \binom{n-q+1}{n-r-q+1} & \cdots & \binom{n-2}{n-r-2} & \binom{n-1}{n-r-1} \\
\binom{n-q+1}{n-r-q} & \binom{n-q+2}{n-r-q+1} & \cdots & \binom{n-1}{n-r-2} & \binom{n}{n-r-1} \\
\vdots & \vdots & \vdots &\vdots & \vdots \\
\binom{n-2}{n-r-q} & \binom{n-1}{n-r-q+1} & \cdots & \binom{n+q-4}{n-r-2} & \binom{n+q-3}{n-r-1} \\
\binom{n-1}{n-r-q} & \binom{n}{n-r-q+1} & \cdots & \binom{n+q-3}{n-r-2} & \binom{n+q-2}{n-r-1} \\ 
\end{vmatrix}
\hspace{2pt}
\cdots (\dag).
\end{align*}
Lemma 5.3 is obtained by applying the following lemma.
The determinant $\Diamond(n,k,l)$ below corresponds to a rhombus in Pascal's triangle.
The entries of $\Diamond(n,k,l)$ are usual binomial coefficients, so positive integers.  
We can apply the formula in Lemma 5.4 to compute $(\dag)$ by replacing $n,k,l$ to $n-q, n-r-q, q-1$, and we get Lemma 5.3.
\end{proof}
\begin{lemma}
Let $n,l \in \mathbb{N}$ and $0 \leq k \leq n$.
The determinant
\[ 
\Diamond(n,k,l)=
\begin{vmatrix}
\binom{n}{k} & \binom{n+1}{k+1} &\cdots& \binom{n+l}{k+l} \\
\binom{n+1}{k} & \binom{n+2}{k+1} &\cdots & \binom{n+l+1}{k+l} \\
\vdots & \vdots & \vdots & \vdots \\
\binom{n+l}{k} & \binom{n+l+1}{k+1} & \cdots & \binom{n+2l}{k+l} \\
\end{vmatrix}
\]
is equal to
\[ \frac{n!(n+1)! \cdots (n+l)!}{k! (k+1)! \cdots (k+l)! (n-k)! \cdots (n-k+l)!} \cdot (-1)^{\frac{l(l+1)}{2}} 1! \cdots l!. \]
\end{lemma}

\begin{proof}[Proof of Lemma 5.4.]
First, we deform $\Diamond(n,k,l)$ as follows.
\begin{align*}
&
\Diamond(n,k,l) =
\begin{vmatrix}
\frac{n!}{k!(n-k)!} & \frac{(n+1)!}{(k+1)!(n-k)!} &\cdots & \frac{(n+l)!}{(k+l)!(n-k)!} \\
\frac{(n+1)!}{k!(n-k+1)!} & \frac{(n+2)!}{((k+1)!(n-k+1)!} &\cdots &  \frac{(n+l+1)!}{(k+l)!(n-k+1)!} \\
\vdots & \vdots & \vdots & \vdots \\
\frac{(n+l)!}{k!(n-k+l)!} & \frac{(n+l+1)!}{(k+1)!(n-k+l)!} & \cdots & \frac{(n+2l)!}{(k+l)!(n-k+l)!} \\      
\end{vmatrix} \\
&=C
\begin{vmatrix}
1 &1 & \cdots &1 \\
(n+1) & (n+2) & \cdots & (n+l+1) \\
\vdots & \vdots & \vdots & \vdots \\
(n+1) \cdots (n+l) & (n+2) \cdots (n+l+1) & \cdots & (n+l+1) \cdots (n+2l) \\
\end{vmatrix},
\end{align*}
where 
\[ C =  \dfrac{n!(n+1)! \cdots (n+p)!}{k! (k+1)! \cdots (k+l)! (n-k)! \cdots (n-k+l)!}. \]
We add the $(-l+1)$ times of the $l$-th row to the $(l+1)$-th row, the $(-l+2)$ times of the $(l-1)$-th row to the $l$-th row, ..., and ($-1$) times of the second row to the third row:
\begin{align*}
&
\begin{vmatrix}
1 &1 & \cdots &1 \\
(n+1) & (n+2) & \cdots & (n+l+1) \\
\vdots & \vdots & \vdots & \vdots \\
(n+1) \cdots (n+l) & (n+2) \cdots (n+l+1) & \cdots & (n+l+1) \cdots (n+2l) \\
\end{vmatrix} \\
& \qquad =
\begin{vmatrix}
1 &1 & \cdots &1 \\
(n+1) & (n+2) & \cdots & (n+l+1) \\
\vdots & \vdots & \vdots & \vdots \\
(n+1)^2 \cdots (n+l) & (n+2)^2 \cdots (n+l+1) & \cdots & (n+l+1)^2 \cdots (n+2l) \\
\end{vmatrix}.
\end{align*}
The iteration of such a deformation gives us the following determinant:
\[
\begin{vmatrix}
1 &1 & \cdots &1 \\
(n+1) & (n+2) & \cdots & (n+l+1) \\
\vdots & \vdots & \vdots & \vdots \\
(n+1)^l & (n+2)^l& \cdots & (n+l+1)^l \\
\end{vmatrix}.
\]
We can use the formula of Vandermonde's determinant and expand this as follows.
\begin{align*}
\begin{vmatrix}
1 &1 & \cdots &1 \\
(n+1) & (n+2) & \cdots & (n+l+1) \\
\vdots & \vdots & \vdots & \vdots \\
(n+1)^l & (n+2)^l& \cdots & (n+l+1)^l \\
\end{vmatrix} 
&= (-1)^l l! \cdot (-1)^{l-1} (l-1)! \cdots (-1)\\
&= (-1)^{l +(l-1) + \cdots +1} l! (l-1)! \cdots 1 \\
&= (-1)^{\frac{l(l+1)}{2}} 1! \cdots l!.
\end{align*}
Thus 
\[
\Diamond(n,k,l)
=
\frac{n!(n+1)! \cdots (n+l)!}{k! (k+1)! \cdots (k+l)! (n-k)! \cdots (n-k+l)!} \cdot (-1)^{\frac{l(l+1)}{2}} 1! \cdots l!. \]
\end{proof}
Finally, applying Lemma 5.3 to $T_{pqr}(\nu(\infty), \nu(1), \infty(0))$, we can check the value is equal to $1$.
Therefore the triple ratio of ordered triple is always equal to $1$ when we consider the Veronese flag curve. 
We finish the proof of proposition 5.1.
\end{proof}

\begin{proposition}
Let $(a,b,c,d) \in \partial\pi_1(S)^{(4)}$ be a quadruple in counterclockwise order.
Then $p$-th double ratio  $D_p(\xi_{\rho_n}(a), \xi_{\rho_n}(c), \xi_{\rho_n}(b), \xi_{\rho_n}(d))$ is equal to $-z^{-1}$, where $z=z(f_{\rho}(c),f_{\rho}(d),f_{\rho}(a),f_{\rho}(b))$ is the cross ratio of the quadruple $(f_{\rho}(c),f_{\rho}(d),f_{\rho}(a),f_{\rho}(b))$.
\end{proposition}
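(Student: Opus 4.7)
The strategy mirrors Proposition 5.1. Since $D_p$ is ${\rm PSL}_n\mathbb{R}$-invariant and $\xi_{\rho_n}=\nu\circ f_\rho$ is equivariant through $\iota_n$, I can precompose $f_\rho$ with an element $A\in{\rm PSL}_2\mathbb{R}$ that sends $(f_\rho(c),f_\rho(d),f_\rho(a))$ to $(0,1,\infty)$. Writing $s:=A(f_\rho(b))$, Remark 2.3 gives $z=z(0,1,\infty,s)=s$, so the proposition reduces to the single identity
\[ D_p(\nu(\infty),\nu(0),\nu(s),\nu(1)) = -s^{-1}. \]

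To evaluate this, I would use the explicit Veronese flag description from the proof of Proposition 5.1, keeping the same identification $b_1\wedge\cdots\wedge b_n\leftrightarrow 1$ with $b_j=X^{n-j}Y^{j-1}$. The natural wedge representatives are
\[ e^k = b_1\wedge\cdots\wedge b_k \in \textstyle\bigwedge^k\nu(\infty)^k, \qquad f^k = b_{n-k+1}\wedge\cdots\wedge b_n \in \textstyle\bigwedge^k\nu(0)^k, \]
and for the two ``$g$-lines'' one takes $g^1=(sX+Y)^{n-1}$ and $g'^1=(X+Y)^{n-1}$. The key observation is that $e^p\wedge f^{n-p-1}$ contains every $b_j$ except $b_{p+1}$, so wedging it against a vector $w=\sum_j c_j b_j$ extracts $c_{p+1}$ up to the uniform sign $(-1)^{n-p-1}$; analogously $e^{p-1}\wedge f^{n-p}$ extracts the coefficient of $b_p$ up to $(-1)^{n-p}$. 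Since $(sX+Y)^{n-1}=\sum_j \binom{n-1}{j-1} s^{n-j} b_j$, each of the four wedge products entering $D_p$ reduces to a sign times a binomial coefficient times either $s^{n-p-1}$ or $s^{n-p}$.

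Substituting into the definition of $D_p$, the binomial coefficients and all sign factors cancel pairwise in the ratio, leaving $D_p=-s^{n-p-1}/s^{n-p}=-s^{-1}=-z^{-1}$, as claimed. The principal obstacle is the sign bookkeeping from reordering $b_{p+1}$ and $b_p$ into their natural positions in the wedge, together with verifying that the permuted arguments $(a,c,b,d)$ in the statement correspond exactly to the normalized flags $(\nu(\infty),\nu(0),\nu(s),\nu(1))$ in the correct slot of $D_p$. The substantive content beyond this is minimal, since all dependence on $s$ is confined to the single factor $(sX+Y)^{n-1}$ and the other factors only contribute constants that cancel.
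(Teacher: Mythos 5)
Your proposal is correct and follows essentially the same route as the paper: normalize $(f_\rho(c),f_\rho(d),f_\rho(a))$ to $(0,1,\infty)$ so that $f_\rho(b)$ lands on the cross ratio $z$, reduce to $D_p(\nu(\infty),\nu(0),\nu(z),\nu(1))$, and evaluate the four wedge products with the standard monomial basis of ${\rm Poly}_n(X,Y)$, where each reduces to $(-1)^{n-p-1}$ or $(-1)^{n-p}$ times $\binom{n-1}{p}$ or $\binom{n-1}{p-1}$ times a power of $z$, and everything cancels except $-z^{-1}$. This is exactly the computation in the paper's proof.
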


\begin{proof}
The proof is similar to one of proposition 5.1.
Let $A \in {\rm PSL}_2\mathbb{R}$ be a transformation which sends $f_{\rho}(c)$ to $0$, $f_{\rho}(d)$ to $1$, $f_{\rho}(a)$ to $\infty$.
The transformation $A$ maps $f_{\rho}(b)$ to the cross ratio $z= z(f_{\rho}(c),f_{\rho}(d),f_{\rho}(a),f_{\rho}(b))$.
Then, by the same computation with the case of triple ratio,
\[
D_p(\xi_{\rho_n}(a), \xi_{\rho_n}(c), \xi_{\rho_n}(b), \xi_{\rho_n}(d))
=
D_p(\nu(\infty), \nu(0), \nu(z), \nu(1)).
\]
The flags $\nu(\infty), \nu(0), \nu(1), \nu(z)$ is defined by the following vector spaces:
\begin{align*}
\nu(\infty)^d &= {\rm Span}< b_1, b_2, \cdots, b_d> \\
\nu(0)^d &= {\rm Span}<b_{n-d+1}, b_{n-d+2}, \cdots, b_n > \\
\nu(1)^1 &= \mathbb{R} \sum_{i=0}^{n-1}\binom{n-1}{i}b_{i+1} \\
\nu(z)^1 &=  \mathbb{R} \sum_{i=0}^{n-1}\binom{n-1}{i}z^{n-1-i}b_{i+1} 
\end{align*}
where $b_1, \cdots, b_n$ are the basis of ${\rm Poly}_n(X,Y)$ we used.
We choose a basis of the wedge products of the vector spaces $\bigwedge^d \nu(\infty)^d, \bigwedge^d \nu(0)^d, \nu(1)^1, \nu(z)^1$ as follows:
\begin{align*}
s_{\infty}^d &= b_1 \wedge b_2 \wedge \cdots \wedge b_d \in \bigwedge^d \nu(\infty)^d \\
s_0^d &= b_{n-d+1} \wedge b_{n-d+2} \wedge \cdots \wedge b_n \in  \bigwedge^d \nu(0)^d \\
s_1^1 &= \sum_{i=0}^{n-1}\binom{n-1}{i}b_{i+1} \in \nu(1)^1 \\
s_z^1 &= \sum_{i=0}^{n-1}\binom{n-1}{i}z^{n-1-i}b_{i+1}  \in \nu(z)^1
\end{align*}
Recall that the double ratio $D_p(\nu(\infty), \nu(0), \nu(z), \nu(1))$ is defined by
\[ D_p(\nu(\infty), \nu(0), \nu(z), \nu(1)) = - \dfrac{ s_{\infty}^p \wedge s_0^{n-p-1} \wedge s_z^1 \cdot s_{\infty}^{p-1} \wedge s_0^{n-p} \wedge s_1^1}{s_{\infty}^p \wedge s_0^{n-p-1} \wedge s_1^1 \cdot s_0^{p-1} \wedge s_0^{n-p} \wedge s_z^1} \]

Thus it is enough to compute each factors of this fraction.
The computation is very simple: 

\begin{align*}
s_{\infty}^p \wedge s_0^{n-p-1} \wedge s_z^1
&=
\begin{vmatrix}
{\rm Id}_p &  \mbox{\LARGE 0} & \binom{n-1}{0}z^{n-1} \\
 & &   \binom{n-1}{1}z^{n-2} \\
 & & \vdots \\ 
\mbox{\LARGE 0} &  {\rm Id}_{n-p-1} & \binom{n-1}{n-1}z^{0}
\end{vmatrix}\\
&=
(-1)^{n-p-1}\binom{n-1}{p}z^{n-p-1}, \\
\end{align*}

\begin{align*}
s_{\infty}^p \wedge s_0^{n-p-1} \wedge s_1^1
&=
\begin{vmatrix}
{\rm Id}_p &  \mbox{\LARGE 0} & \binom{n-1}{0} \\
 & &   \binom{n-1}{1} \\
 & & \vdots \\ 
\mbox{\LARGE 0} &  {\rm Id}_{n-p-1} & \binom{n-1}{n-1}
\end{vmatrix}\\
&=
(-1)^{n-p-1}\binom{n-1}{p} .\\
\end{align*}
\end{proof}

\begin{theorem}
If $\rho_n = \iota_n \circ \rho : \pi_1(S) \rightarrow {\rm PSL}_n\mathbb{R}$ is a ${\rm PSL}_n\mathbb{R}$-Fuchsian representation, then it follows that
\begin{itemize}
\item[(i)] all triangle invariants $\tau_{pqr}(T_{ijk}, \rho_n)$ are equal to $0$,
\item[(ii)] all shearing invariants $\sigma_p(B_{ij}, \rho_n)$ and all gluing invariants  $\theta_p(C_i,\rho_n)$ are independent of the index $p$. 
\end{itemize}

\end{theorem}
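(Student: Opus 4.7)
The plan is to read off Theorem 5.6 as essentially a corollary of Proposition 5.1 and Proposition 5.5. The whole point of the computational work in this section has been to evaluate the projective invariants on the Veronese flag curve, and once those formulas are in hand, both claims reduce to unwinding definitions.

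For part (i), I would fix an ideal triangle $T$ with a distinguished ideal vertex $v_0$, take a lift $\tilde T$ with vertices $(v_0,v_1,v_2)$ in clockwise order, and write
\[
\tau_{pqr}((T,v_0),\rho_n) = \log T_{pqr}(\xi_{\rho_n}(v_0),\xi_{\rho_n}(v_1),\xi_{\rho_n}(v_2)).
\]
Since the flag curve of $\rho_n = \iota_n\circ\rho$ is the Veronese flag curve $\nu\circ f_\rho$, and $(v_0,v_1,v_2)\in\partial\pi_1(S)^{(3)}$ is a clockwise ordered triple, Proposition 5.1 gives that this triple ratio equals $1$, hence its logarithm is $0$. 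This is independent of the choice of lift and distinguished vertex because of the invariance under $\mathrm{PSL}_n\mathbb{R}$ and the symmetry relation in Proposition 4.2.

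For part (ii), I would apply Proposition 5.5. For a biinfinite leaf $B$ with endpoints $x,y$ and adjacent triangles with third vertices $z^l,z^r$ arranged so that $(x,z^l,y,z^r)$ is in counterclockwise order on $\partial\pi_1(S)$, Proposition 5.5 yields
\[
D_p(\xi_{\rho_n}(x),\xi_{\rho_n}(y),\xi_{\rho_n}(z^l),\xi_{\rho_n}(z^r)) = -z(f_\rho(y),f_\rho(z^r),f_\rho(x),f_\rho(z^l))^{-1},
\]
and the right-hand side depends only on the underlying hyperbolic cross ratio, not on the index $p$. Taking logarithms shows $\sigma_p(B,\rho_n)$ is independent of $p$. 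The same argument, with the appropriate quadruple of flags at $(x,y,z^l,z^r)$ coming from the short-arc system for a closed leaf $C$, proves that the gluing invariants $\theta_p(C,\rho_n)$ are also independent of $p$.

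The only possible obstacle is bookkeeping with the orientation and cyclic-order conventions: the statement of Proposition 5.5 uses a quadruple in counterclockwise order, whereas the definitions of shearing and gluing invariants in Section 4.2 specify their own orderings of $x,y,z^l,z^r$. I would verify once and for all that the configurations produced by the definitions of $\sigma_p$ and $\theta_p$ do place the four boundary points into the counterclockwise pattern required by Proposition 5.5 (using that the developing map $f_\rho$ preserves the orientation of $\partial\mathbb{H}^2$ inherited from the surface), so that the double ratio formula applies directly. Once this is confirmed, Theorem 5.6 follows with no further computation.
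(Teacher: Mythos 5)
Your proposal is correct and follows essentially the same route as the paper: part (i) is exactly the application of Proposition 5.1 to the clockwise-ordered vertex triple of a lifted ideal triangle, and part (ii) is the application of Proposition 5.5 to the quadruple $(x,z^l,y,z^r)$, with the gluing invariants handled identically. The orientation bookkeeping you flag is indeed the only delicate point, and the paper treats it in the same one-line fashion.
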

\begin{proof}
(i)
Recall the definition of triangle invariants.
Fix a lift $\tilde{T}_{ijk}$ of an ideal triangle $T_{ijk}$.
Let $x,y,z \in \partial \pi_1(S)$ be the vertices of $\tilde{T}_{ijk}$ which are in clock-wise ordering.
Then $\tau_{pqr}(T_{ijk}, \rho_n) = \log ( T_{pqr}(\xi_{\rho_n}(x), \xi_{\rho_n}(y) , \xi_{\rho_n}(z)))$.
In this case, the triple ratio is equal to $1$ by proposition 5.1, so $\tau_{pqr}(T_{ijk}, \rho_n)=0$. 

(ii)Let $\tilde{B}_{ij}$ be a lift of a biinfinite leaf $B_{ij}$ and $\tilde{T}^l$ and $\tilde{T}^r$ be lifts of the left $T^l$ and right triangles $T^r$ respectively.
Respecting the orientation of $\tilde{B}_{ij}$, we label $x,y,z^l,z^r$ on the ideal vertices of $\tilde{T}^l, \tilde{T}^r$ as in Section 4.2.
Then the quadruple $(x, z^l, y, z^r)$ is clock-wisely ordered, so by proposition 5.5, 
\begin{align*}
\sigma_p(B_{ij}, \rho_n) &= \log D_p(\xi_{\rho_n}(x),\xi_{\rho_n}(y),\xi_{\rho_n}(z^l),\xi_{\rho_n}(z^r)) \\
&= \log -z(f_{\rho}(y),f_{\rho}(z^r),f_{\rho}(x),f_{\rho}(z^l))^{-1}. 
\end{align*}
Especially, the shearing invariant is independent of the index $p$.
We can similarly show the case of gluing invariants.
The differences are only in the choice of ideal triangles and a quadruple of ideal vertices which are used in the definition of the gluing invariants.
\end{proof}
\begin{corollary}
The shearing invariants $\sigma_p(B_{ij}, \rho_n)$ of a Fuchsian representation $\rho_n = \iota_n \circ \rho$ is equal to the shearing parameter along the biinfinite leaf $B_{ij}$ defined by $\rho$.
\end{corollary}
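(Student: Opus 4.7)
The plan is to read off the corollary directly by comparing two formulas that have already been established. By Proposition 2.4, the classical shearing parameter along a biinfinite leaf, expressed for ideal triangles $T^l = \triangle(x,z^l,y)$ and $T^r = \triangle(x,y,z^r)$ with $x,z^l,y,z^r$ in counterclockwise order, is
\[ \sigma(T^l, T^r) = \log\bigl(-z(y,z^r,x,z^l)^{-1}\bigr). \]
On the other hand, in the proof of Theorem 5.6(ii), I would have shown that for the Fuchsian representation $\rho_n = \iota_n \circ \rho$ the $p$-th shearing invariant reduces (via Proposition 5.5) to
\[ \sigma_p(B_{ij}, \rho_n) = \log\bigl(-z(f_\rho(y), f_\rho(z^r), f_\rho(x), f_\rho(z^l))^{-1}\bigr), \]
independent of $p$.

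Next I would set up the comparison carefully. Fix a lift $\tilde B_{ij}$ of the biinfinite leaf $B_{ij}$ with attracting and repelling endpoints $x,y \in \partial \tilde S = \partial \pi_1(S)$, and let $z^l, z^r$ be the opposite vertices of the two lifted ideal triangles adjacent to $\tilde B_{ij}$, chosen in the order prescribed in both Section 2.4 and Section 4.2. The developing map $f_\rho: \tilde S \to \mathbb{H}^2$ sends this configuration to the corresponding configuration of ideal triangles used in the definition of the shearing parameter of $\rho$. Since the two definitions use the same counterclockwise ordering convention (this is why Remark 2.3 insisted on $z(0,1,\infty,d)=d$, making the sign conventions match), the quadruples of cross-ratio arguments coincide after applying $f_\rho$.

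The two displays above therefore have identical right-hand sides once we identify $(x,y,z^l,z^r)$ with $(f_\rho(x), f_\rho(y), f_\rho(z^l), f_\rho(z^r))$, which is exactly the statement of the corollary. There is no serious obstacle here; the only point that requires attention is bookkeeping of the cyclic ordering and of the role of the attracting vs.\ repelling endpoint of $\tilde B_{ij}$, to confirm that the orientation conventions of Section 2 and Section 4 lead to the same ordered quadruple. Once this matching is verified, the equality $\sigma_p(B_{ij},\rho_n) = \sigma^\rho(B_{ij})$ is immediate, and in particular it is independent of $p$, recovering the second half of Theorem 5.6(ii) as a special case.
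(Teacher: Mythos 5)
Your proposal is correct and matches the paper's own argument: the paper likewise combines the formula $\sigma_p(B_{ij},\rho_n)=\log\bigl(-z(f_\rho(y),f_\rho(z^r),f_\rho(x),f_\rho(z^l))^{-1}\bigr)$ obtained in the proof of Theorem 5.6(ii) via Proposition 5.5 with the cross-ratio expression of the classical shearing parameter in Proposition 2.4. Your extra attention to the ordering conventions is sound but does not change the route.
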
 

\begin{proof}
We have
$
\sigma_p(B_{ij}, \rho_n) = \log -z(f_{\rho}(y),f_{\rho}(z^r),f_{\rho}(x),f_{\rho}(z^l))^{-1},
$
and this is equal to the shearing parameter along $B_{ij}$ by proposition 2.4.
\end{proof}

\section{Fuchsian locus is a slice.}
Let $\mathcal{S}_{\mathcal{L}_K}$ be a slice of the convex polytope $\mathcal{P}_{\mathcal{L}_K}$, the Bonahon-Dreyer parameter space, defined by $\tau_{pqr}(T^0_{ijk}), \tau_{pqr}(T^1_{ijk}) = 0$, $\sigma_p(B_{ij}) = \sigma_q(B_{ij})$ and $\theta_p(C_i) = \theta_q(C_i)$.
By theorem 5.6, the image of the Fuchsian locus $F_n(S)$ by the Bonahon-Dreyer parameterization $\Phi_{\mathcal{L}_K}$ is contained in this slice $\mathcal{S}_{\mathcal{L}_K}$.

\begin{theorem}
The restriction map $\Phi_{\mathcal{L}_K} : F_n(S) \rightarrow \mathcal{S}_{\mathcal{L}_K}$ is surjective.
\end{theorem}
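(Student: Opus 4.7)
The plan is to produce, given any point $(\mathbf{0}, \boldsymbol{\sigma}, \boldsymbol{\theta}) \in \mathcal{S}_{\mathcal{L}_K}$, a hyperbolic structure $\rho \in \mathscr{T}(S)$ whose associated Fuchsian representation $\iota_n \circ \rho$ is sent by $\Phi_{\mathcal{L}_K}$ to the chosen point. The construction proceeds in three stages: first build a hyperbolic structure on each pair of pants, then glue, then adjust the twists.

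For each pair of pants $P_{ijk}$ in the pants decomposition, the slice supplies three shearing values, and I would apply the shearing parameterization $\sigma_{\mathcal{L}|P_{ijk}}$ of Proposition~2.5. One must check that the restricted triple lies in the range described at the end of Section~2.4, which is cut out by inequalities of the form $\mathrm{sgn}(C)\sum\sigma(B_j) > 0$ corresponding via Proposition~2.6 to positivity of boundary lengths. On the slice the triangle invariants vanish and the shearing invariants are independent of $p$, so the quantities $R_p^{\rho}(C)$ and $L_p^{\rho}(C)$ of Proposition~4.7 reduce to exactly such sums of shearing values along the biinfinite leaves spiraling to $C$ from the two sides. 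The defining inequalities of the Bonahon--Dreyer polytope $\mathcal{P}_{\mathcal{L}_K}$ therefore imply the range conditions for each $P_{ijk}$, so Proposition~2.5 yields a hyperbolic structure on each $P_{ijk}$ realizing the prescribed shearings. The closed-leaf equations $R_p^{\rho}(C) = L_p^{\rho}(C)$ make the boundary lengths of the two pants along each decomposing curve $C_i$ agree, so the pieces assemble into a hyperbolic structure $\rho_0 \in \mathscr{T}(S)$, uniquely up to the choice of twist along each $C_i$.

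By Theorem~5.6 and Corollary~5.7, the Fuchsian representation $\iota_n \circ \rho_0$ already has the prescribed triangle and shearing invariants, but its gluing invariants need not agree with $\boldsymbol{\theta}$. I would correct them using the twist deformations of Section~2.5. A twist along $C_i$ does not change the shearing or triangle invariants, and it leaves the gluing invariants at the other decomposing curves $C_j$ unchanged, because its action on $\mathbb{H}^2$ restricts to a single isometry on each plaque not touching $\tilde{C}_i$. To see that twisting acts surjectively on $\theta_p(C_i, \iota_n \circ \rho)$, Proposition~5.5 expresses this gluing invariant as $\log(-z^{-1})$ for a cross ratio $z$ whose arguments are the endpoints $f_{\rho}(x), f_{\rho}(y)$ of a lift $\tilde{C}_i$ and two spiraling vertices $f_{\rho}(z^l), f_{\rho}(z^r)$. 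Normalizing $f_{\rho}(x) = \infty$ and $f_{\rho}(y) = 0$, the twist $\mathrm{tw}_t$ acts by multiplication by $e^{2t}$ on the half of $\partial\mathbb{H}^2$ containing $f_{\rho}(z^l)$ while fixing the half containing $f_{\rho}(z^r)$, so the cross ratio, and hence $\theta_p(C_i)$, depends affinely and surjectively on $t$. Solving for the twists along each $C_i$ independently yields the required hyperbolic structure $\rho$.

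The most delicate point is verifying that the twist deformation along a single decomposing curve $C_i$ really leaves every other Bonahon--Dreyer invariant fixed; this requires following how $\mathrm{Tw}_t$ acts on the Veronese flag curve $\nu \circ f_{\rho}$ at the vertices entering the definition of the other invariants, and using that the triples or quadruples of points in question each lie in the closure of a single plaque of $\tilde{\mathcal{L}}$, so that they are moved by a single isometry that preserves the relevant triple and cross ratios. Once this invariance is granted, the linear and surjective dependence of $\theta_p(C_i)$ on $t$ established above completes the argument.
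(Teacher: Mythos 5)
Your proposal follows essentially the same route as the paper's proof of Theorem 6.1: build a hyperbolic structure on each pair of pants from the prescribed shearing data via the shearing parameterization of Proposition 2.5 (using the closed-leaf conditions to check the data lies in its range and that boundary lengths match), glue via the Fenchel--Nielsen coordinate, and then realize the prescribed gluing invariants by twist deformations along the decomposing curves, verifying that twists along distinct curves do not interfere (the paper's Lemmas 6.2 and 6.3). The only cosmetic difference is that you obtain surjectivity of the twist action on $\theta_p(C_i)$ from an explicit affine dependence of $\log(-z^{-1})$ on $t$, whereas the paper argues via the limits of ${\rm Tw}_t \circ f_{\rho}(z_i^l)$ as $t \to \pm\infty$; both arguments are correct.
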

\begin{proof}
Suppose that $z_{ij}, w_i \in \mathbb{R}$ give a point of the parameter space $\mathcal{S}_{\mathcal{L}_K}$ by $\tau_{pqr}(T^0_{ijk}) , \tau_{pqr}(T^1_{ijk})= 0, \sigma_p(B_{ij}) = z_{ij}, \theta_p(C_i) = w_i$.
We construct a hyperbolic structure of $S$ with a holonomy $\eta$ such that $\Phi_{\mathcal{L}_K}(\iota_n \circ \eta)$ is equal to the point of $\mathcal{S}_{\mathcal{L}_K}$.
We show this only in the case of the maximal geodesic lamination of type (I) because the argument is completely similar.

First we focus on each pants which is given by the pants decomposition by $\{C_i\}$.
Let $P_{ijk}$ be a pants bounded three closed geodesic leaf $C_i, C_j, C_k$.
By proposition 2.5, the hyperbolic structure of a pair of pants is uniquely determined by the shearing parameters along biinfinite leaves $B_{ij}, B_{jk}, B_{ki}$.
We endows with $P_{ijk}$ the hyperbolic structure $\rho_{ijk}$ defined by $\sigma^{\rho_{ijk}}(B_{ij}) = z_{ij}, \sigma^{\rho_{ijk}}(B_{jk}) = z_{jk}, \sigma^{\rho_{ijk}}(B_{ki}) = z_{ki}$.
Since $z_{ij}, z_{jk}, z_{ki}$ satisfy the closed leaf condition, they are indeed in the image of the shearing parameterization $\sigma_{\mathcal{L}_K}$.
  
Now we glue the hyperbolic structures of $P_{ijk}$.
The hyperbolic structures of each $P_{ijk}$ gives the length of closed leaves $C_i$.
For example, the length of $C_i$ is given by $l(C_i) = |z_{ij} + z_{ik}|$, see Proposition 2.6.
We take a hyperbolic structure of $S$ such that the hyperbolic length of $C_i$ is equal to given $l(C_i)$ by the Fenchel-Nielsen coordinate of $S$ associated to the pants decomposition along $\{C_i\}$.
Let $\rho : \pi_1(S) \rightarrow {\rm PSL}_2\mathbb{R}$ be a hyperbolic holonomy of this hyperbolic structure and $f_{\rho} : \tilde{S} \rightarrow \mathbb{H}^2$ be the developing map. 

Fix a lift $\tilde{C}_i$ of the closed geodesic $C_i$.
We lift the short transverse arc $K_i = K_{C_i}$ to $\tilde{K}_i$ so that $\tilde{K}_i$ intersects to $\tilde{C}_i$.
The endpoints of $\tilde{K}_i$ are contained in two plaque $Q_i^l,Q_i^r$.
They are lifts of one of $T^0_{ijk},T^1_{ijk}, T^0_{ilm}, T^1_{ilm}$ which are ideal triangles spiraling to $C_i$.
We may assume that $Q_i^l$ is on the left and $Q_i^r$ is on the right with respect to the orientation of $C_i$. 
Let $x_i$ and $y_i$ be the starting and terminal points respectively.
Choose ideal points $z_i^l$ and $z_i^r$ of plaques $Q_i^l$ and $Q_i^r$ respectively as in Section 4.2.

We deform the hyperbolic structure $\rho$ of $S$ to a hyperbolic structure $\eta$ which realizing the following equation 
\[ \log -z(f_{\eta}(y),f_{\eta}(z_i^r),f_{\eta}(x),f_{\eta}(z_i^l))^{-1} = w_i \] 
by twist deformation.

\begin{lemma}
For any $r \in \mathbb{R}_{<0}$, there is a twist deformation $\eta$ of $\rho$ along $C_i$ such that $z(f_{\eta}(y),f_{\eta}(z_i^r),f_{\eta}(x),f_{\eta}(z_i^l)) = r$.
\end{lemma}
\begin{proof}[Proof of Lemma 6.2.]
Consider the geodesic lamination $\mathscr{C}_i$ which consists of the preimage of $C_i$ by the covering map $f_{\rho}(\tilde{S}) \rightarrow S_{\rho}$, where $S_{\rho}$ is the surface with the hyperbolic structure $\rho$. 
Let $R_i^l$ and $R_i^r$ be plaques of $\mathscr{C}_i$ containing $Q_i^l$ and $Q_i^r$ respectively.
Set $\tilde{C}_i = R_i^l \cap R_i^r$.
We observe the behavior of cross ratio under the twist deformation along $\mathscr{C}_i$.
The twists along leaves of $\mathscr{C}_i$ other from $\tilde{C}_i$ do not change the cross ratio $z(f_{\rho}(y),f_{\rho}(z_i^r),f_{\rho}(x),f_{\rho}(z_i^l))$ since the twists act on the quadruple by isometry.
Only the twist along  $\tilde{C}_i$ change the cross ratio to $z( f_{\rho}(y),f_{\rho}(z_i^r),f_{\rho}(x), {\rm Tw}_t \circ f_{\rho}(z_i^l))$, where that the map ${\rm Tw}_t$ is the extension of the twist deformation onto the ideal boundary of $\mathbb{H}^2$.
Since 
\[ \lim_{t \rightarrow \infty}{\rm Tw}_t \circ f_{\rho}(z_i^l) = -\infty, ~~ \lim_{t \rightarrow -\infty}{\rm Tw}_t \circ f_{\rho}(z_i^l) = 0, \]
there exists $t_0$ such that  $z( f_{\rho}(y),f_{\rho}(z_i^r),f_{\rho}(x), {\rm Tw}_{t_0} \circ f_{\rho}(z_i^l)) = r$ for given $r$.
\end{proof}
Using this lemma, we can deform $\rho$ to $\eta$ which satisfies for each $i$ that
\[ \log -z(f_{\eta}(y),f_{\eta}(z_i^r),f_{\eta}(x),f_{\eta}(z_i^l))^{-1} = w_i. \] 
In particular we apply Lemma 6.2 for $r = -e^{-w_i}$.
In this deformation, we should check two twist deformations along distinct curves $C_i$ and $C_j$ do not change the gluing invariant each other.
\begin{lemma}
We suppose that the hyperbolic structure $\rho$ is deformed to a hyperbolic structure $\rho_i$ by a twist deformation along $C_i$.
The twist deformation along $C_j$ does not change the cross ratio $z( f_{\rho_i}(y),f_{\rho_i}(z_i^r),f_{\rho_i}(x), f_{\rho_i}(z_i^l))$.
\end{lemma}
\begin{proof}[Proof of Lemma 6.3]
If a lift $\tilde{C}_j$ of $C_j$ divides ideal points $f_{\rho_i}(y),f_{\rho_i}(z_i^r),f_{\rho_i}(x), f_{\rho_i}(z_i^l)$, then $\tilde{C}_j$ intersects $\bar{Q}^l \cup \bar{Q}^r$, where the closure is taken in $\mathbb{H}^2 \bigsqcup \partial \mathbb{H}^2$.
It contradicts that $C_i$ and the projections of $Q^l$ and $Q^r$, which are ideal triangles spiraling to $C_i$, do not intersect to $C_j$ since $C_j$ is a pants-decomposing curve. 
Hence ideal points $f_{\rho_i}(y),f_{\rho_i}(z_i^r),f_{\rho_i}(x), f_{\rho_i}(z_i^l)$ are in a same plaque of the geodesic lamination $\mathscr{C}_j$ which is defined by the preimage of $C_j$.
Since the cross ratio is invariant for isometries, we obtain the statement of the lemma.
\end{proof}
Thus we can deform the original structure $\rho$ of $S$ to a hyperbolic structure $\eta$ by a twist deformation along each $C_i$ so that, for each $i$, $\eta$ realizes the equation  
\[ \log -z(f_{\eta}(y),f_{\eta}(z_i^r),f_{\eta}(x),f_{\eta}(z_i^l))^{-1} = w_i .\]
Then $\Phi_{\mathcal{L}_K} ( \iota_n \circ \eta)$ coincides with the given point  defined by $\tau_{pqr}(T^0_{ijk}) , \tau_{pqr}(T^1_{ijk})= 0, \sigma_p(B_{ij}) = z_{ij}, \theta_p(C_i) = w_i$.
We finish the proof of Theorem 6.1.
\end{proof}

\section{The case of surfaces with boundary}
To define the Bonahon-Dreyer parameterization for surfaces with boundary, Bonahon-Dreyer used the result of Labourie-McShane.
\begin{theorem}[Labourie-McShane \cite{LaMc09} Theorem 9.1.]
Let $S$ be a compact hyperbolic oriented surface with nonempty boundary, and $\rho: \pi(S) \rightarrow {\rm PSL}_n\mathbb{R}$ be a Hitchin representation.
Then there exists a Hitchin representation $\hat{\rho} : \pi_1(\hat{S}) \rightarrow {\rm PSL}_n\mathbb{R}$ of the fundamental group of the double $\hat{S}$ of $S$ such that the restriction $\hat{\rho}$ to $\pi_1(S)$ is equal to $\rho$.
\end{theorem}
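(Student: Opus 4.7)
The plan is to build the extension $\hat\rho$ by combining the amalgamated-product description of $\pi_1(\hat S)$ with a symmetry of ${\rm PSL}_n\mathbb{R}$ that preserves Hitchin-ness, and then to verify the Hitchin property on $\hat S$ via a continuity argument from the Fuchsian locus. First I would apply Van Kampen's theorem to the decomposition $\hat S = S \cup_{\partial S} \bar S$, where $\bar S$ is the mirror copy of $S$. Writing $\iota : \hat S \rightarrow \hat S$ for the canonical orientation-reversing involution fixing $\partial S$ pointwise, one obtains
\[ \pi_1(\hat S) \;\cong\; \pi_1(S) *_{\pi_1(\partial S)} \iota_*\pi_1(S),\]
so constructing $\hat\rho$ reduces to producing a representation $\bar\rho : \pi_1(\bar S) \rightarrow {\rm PSL}_n\mathbb{R}$ whose restriction to each boundary loop agrees (after a single global conjugation) with that of $\rho$.

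Second, I would define $\bar\rho$ via the anti-involution $\theta(g) = w_0 (g^{-1})^T w_0^{-1}$ of ${\rm PSL}_n\mathbb{R}$, where $w_0$ is the longest Weyl element, and set $\bar\rho(\gamma) = \theta(\rho(\iota_*^{-1}\gamma))$. The map $\theta$ preserves the image of the principal ${\rm PSL}_2\mathbb{R}$ (hence sends Fuchsian loci to Fuchsian loci) and, at the level of conjugacy classes, acts trivially on purely loxodromic elements whose eigenvalue spectra are symmetric under $\lambda_i \leftrightarrow \lambda_{n+1-i}^{-1}$. This matches exactly the constraint on boundary holonomies imposed by Proposition 3.10. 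Thus on each component of $\partial S$, the identity $\rho(\alpha) = \bar\rho(\iota_* \alpha)$ holds after an appropriate basepoint adjustment, and the universal property of amalgamated products yields a well-defined $\hat\rho : \pi_1(\hat S) \rightarrow {\rm PSL}_n\mathbb{R}$.

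Third, I would show $\hat\rho \in H_n(\hat S)$ by the following two-step strategy. On the one hand, when $\rho = \iota_n \circ \rho_0$ is ${\rm PSL}_n\mathbb{R}$-Fuchsian, the hyperbolic holonomy $\rho_0$ extends by the classical boundary-doubling construction to a Fuchsian representation $\hat\rho_0$ of $\pi_1(\hat S)$, and $\iota_n \circ \hat\rho_0$ coincides with $\hat\rho$ by uniqueness of amalgamation, so the extension lies in $F_n(\hat S) \subset H_n(\hat S)$. On the other hand, the construction $\rho \mapsto \hat\rho$ is continuous in $\rho$. Since $H_n(S)$ is connected and the space of purely loxodromic representations with Hitchin-compatible boundary data is connected, $\hat\rho$ varies continuously within the character variety $\mathcal{X}_n(\hat S)$; because $H_n(\hat S)$ is a connected component of $\mathcal{X}_n(\hat S)$ (Theorem 3.3), the path stays in $H_n(\hat S)$.

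The main obstacle is the verification of compatibility on boundary holonomies—specifically, showing that a single conjugation can simultaneously align $\rho(\alpha)$ and $\theta(\rho(\alpha))$ for every boundary component, not just one at a time. Once this compatibility is ensured, the remaining difficulty is the continuity of the extension: this requires that the intermediate algebraic construction does not exit the locus of purely loxodromic representations of $\pi_1(\hat S)$, which in turn uses the openness of the purely loxodromic condition together with Proposition 3.10 applied to $\rho$. An alternative approach that sidesteps the connectedness argument is to construct the flag curve $\xi_{\hat\rho} : \partial \pi_1(\hat S) \to {\rm Flag}(\mathbb{R}^n)$ by gluing $\xi_\rho$ and $\theta \circ \xi_\rho \circ \iota_*$ along the fixed points of boundary holonomies, and to check hyperconvexity (Theorem 3.6) for $n$-tuples straddling $\partial S$ directly—but this cross-boundary hyperconvexity check is precisely where the analysis becomes delicate and is the heart of the Labourie–McShane argument.
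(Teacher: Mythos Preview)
The paper does not itself prove this theorem; it is quoted from \cite{LaMc09} and used as a black box in Section~7, so there is no in-paper argument to compare against. Your outline nonetheless contains a genuine gap. The crucial error is the assertion that Proposition~3.10 forces the palindromic eigenvalue symmetry $\lambda_i(\rho(\alpha)) = \lambda_{n+1-i}(\rho(\alpha))^{-1}$ for each boundary loop~$\alpha$. Proposition~3.10 says only that the eigenvalues are real, positive, and distinct; it imposes no such relation. Indeed, in the Bonahon--Dreyer coordinates the length functions $l_p^{\rho}(C) = \log(\lambda_p/\lambda_{p+1})$ attached to a boundary component~$C$ can be prescribed independently (subject only to positivity), so Hitchin representations with $l_p^{\rho}(C) \neq l_{n-p}^{\rho}(C)$ abound. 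The symmetry you need does hold on the Fuchsian locus, where $\lambda_i = \mu^{\,n+1-2i}$ for some $\mu>1$, but not off it.

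Without that symmetry your construction collapses. The involution $\theta(g)=w_0(g^{-1})^{T} w_0^{-1}$ sends a loxodromic element with spectrum $(\lambda_1,\dots,\lambda_n)$ to one with spectrum $(\lambda_n^{-1},\dots,\lambda_1^{-1})$; these are conjugate in ${\rm PSL}_n\mathbb{R}$ only when the palindromic relation holds. Thus for a generic Hitchin $\rho$ no conjugation---global or one per boundary component---can make $\bar\rho = \theta\circ\rho\circ\iota_*^{-1}$ agree with $\rho$ on $\pi_1(\partial S)$, and the amalgamated product does not produce a representation~$\hat\rho$ at all. Your continuity step is sound in principle, but it presupposes an extension that you have in fact only defined on $F_n(S)$. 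The Labourie--McShane argument avoids this obstruction by working through their cross-ratio and Frenet-curve framework rather than a single Lie-group involution, which is why no eigenvalue constraint on the boundary is needed there.
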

For the flag curve $\hat{\xi}_{\hat{\rho}} : \partial \pi_1(\hat{S}) \rightarrow {\rm Flag}(\mathbb{R}^n)$, we set $\xi_{\rho} = \hat{\xi}_{\hat{\rho}}|\partial \pi_1(S)$, the restriction to the boundary of $S$.
We call this restriction the {\it restricted flag curve}.
We use this restriction to define the Bonahon-Dreyer parameterization of surfaces with boundary.
As the case of closed surfaces, we consider triangle, shearing, and gluing invariants defined by restricted flag curves.
In particular, the parameterization map $\Phi_{\mathcal{L}_K} : H_n(S) \rightarrow \mathbb{R}^N$ is defined by
\begin{itemize}
\item all triangle invariants for ideal triangles which give the ideal triangulation by $\mathcal{L}_K$,
\item all shearing invariants for biinfinite leaves of $\mathcal{L}_K$, and
\item all gluing invariants for closed leaves of $\mathcal{L}_K$ which are not a boundary component of $S$.
\end{itemize}

The range is the interior of a convex polytope in $\mathbb{R}^N$.
The convex polytope is defined by the closed equality condition for closed leaves which are not on boundary of $S$, and the closed inequality condition for boundary components.
Here the closed inequality condition is the condition $L_p^{\rho}~ \mbox{or}~R_p^{\rho}(C) > 0$.

In this case, Theorem 5.6 and Theorem 6.1 also hold.
To check this, we focus on the doubling construction of ${\rm PSL}_n\mathbb{R}$-Fuchsian representations.
In the proof of the existence of Hitchin doubles (Theorem 9.1 of \cite{LaMc09}), we can see that the double of a Fuchsian representation $\iota_n \circ \rho$ is $\iota_n \circ \hat{\rho}$, the Fuchsian representation induced by the hyperbolic double $\hat{\rho}$ of the hyperbolic holonomy $\rho$.
Thus the restricted flag curve of $\iota_n \circ \rho$ is the restriction of the Veronese flag curve of $\iota_n \circ \hat{\rho}$ and Theorem 5.6 and Theorem 6.1 for non-closed surface are shown similarly.

\end{document}